\newcommand{\PR}{\mathbb{P}}
\newcommand{\E}{\mathbb{E}}
\newcommand{\LL}{\mathbb{L}^2}
\newcommand{\Q}{\mathbb{Q}}
\newcommand{\R}{\mathbb{R}}
\newcommand{\N}{\mathbb{N}}
\newcommand{\Hs}{\mathcal{H}_s}
\newcommand{\cH}{\mathcal{H}}
\newcommand{\cA}{\mathcal{A}}
\newcommand{\cC}{\mathcal{C}}
\newcommand{\cD}{\mathcal{D}}
\newcommand{\cN}{\mathcal{N}}
\newcommand{\cP}{\mathcal{P}}
\newcommand{\bftheta}{\boldsymbol{\theta}}
\newcommand{\bfmu}{\boldsymbol{\mu}}
\newcommand{\bfX}{\mathbf{X}}
\newcommand{\dd}{\textrm{d}}
\newcommand{\indicator}[1]{\mathds{1}_{#1}}
\newcommand{\eqdef}{:=} 
\newcommand{\Id}{\mathbb{I}}
\renewcommand{\leq}{\leqslant}
\renewcommand{\geq}{\geqslant}
\renewcommand{\epsilon}{\varepsilon}
\renewcommand{\hat}{\widehat}
\renewcommand{\tilde}{\widetilde}
\newcommand{\paragraphnospace}[1]{\medskip \noindent\textbf{#1}~~}
\newcommand{\eg}{e.g.}
\newcommand{\ie}{i.e.}
\DeclareMathOperator{\KL}{KL}
\DeclareMathOperator{\Ber}{Ber}
\newcommand{\argmin}{\operatornamewithlimits{arg\,min}}
\newtheorem{lem}{Lemma}
\newtheorem{rem}{Remark}
\newtheorem{prop}{Proposition}
\newtheorem{thm}{Theorem}
\newtheorem*{theo}{Theorem}  
\title{Optimal functional supervised classification with separation condition}
\author{S\'{e}bastien Gadat, S\'{e}bastien Gerchinovitz, and Cl\'{e}ment Marteau}
\begin{document}
\maketitle

\abstract{We consider the binary supervised classification problem with the Gaussian functional model introduced in \cite{Cadre}. Taking advantage of the Gaussian structure, we design a natural plug-in classifier and derive a family of upper bounds on its worst-case excess risk over Sobolev spaces. These bounds are parametrized by a separation distance quantifying the difficulty of the problem, and are proved to be optimal (up to logarithmic factors) through matching minimax lower bounds. Using the recent works of \cite{chaudhuri} and \cite{GKM16} we also derive a logarithmic lower bound showing that the popular $k$-nearest neighbors classifier is far from optimality in this specific functional setting.}

\section{Introduction}
\label{sec:intro}

The binary supervised classification problem is perhaps one of the most common tasks in statistics and machine learning. Even so, this problem still fosters new theoretical and applied questions because of the large variety of the data encountered so far. 
We refer the reader to \cite{DGL96} and \cite{BBG2005} and to the references therein for a comprehensive introduction to binary supervised classification. This problem unfolds as follows. The learner has access to $n$ independent copies $(X_1,Y_1),\ldots, (X_n,Y_n)$ of a pair $(X,Y)$, where~$X$ lies in a measurable space $\mathcal{H}$ and $Y \in \{0,1\}$. The goal of the learner is to predict the label $Y$ after observing the new input $X$, with the help of the sample $\mathcal{S}_n := (X_i,Y_i)_{1 \leq i \leq n}$ to learn the unknown joint distribution $\PR_{X,Y}$ of the pair $(X,Y)$.

In some standard situations, $X$ lies in the simplest possible Hilbert space: $\mathcal{H} = \mathbb{R}^d$, which corresponds to the finite-dimensional binary classification problem. This setting has been extensively studied so far. Popular classification procedures that are now theoretically well understood include the ERM method \cite{MT99,AT07}, the $k$-nearest neighbors algorithm \cite{G78,chaudhuri,BD2015,GKM16}, support vector machines \cite{Steinwart}, or random forests \cite{BiauScornet}, just to name a few.
 
However there are situations where the inputs $X_i$ and $X$ are better modelled as functions; the set~$\mathcal{H}$ is then infinite-dimensional. Practical examples can be found, \eg, in stochastic population dynamics \cite{L03}, in signal processing \cite{C02}, or in finance \cite{LL96}. This binary supervised functional classification problem was tackled with nonparametric procedures such as kernel methods or the $k$-nearest neighbours algorithm. For example, \cite{CoH67} studied the nearest neighbour rule in any metric space, while \cite{KP95} analyzed the performances of the $k$-nearest neighbours algorithm in terms of a metric covering measure. Such metric entropy arguments were also used in \cite{CG06}, or with kernel methods in \cite{ABC03}.\\

\paragraphnospace{Our functional model.} In the present work, we focus on one of the most elementary diffusion classification model: we suppose that the input $X=(X(t))_{t \in [0,1]}$ is a continuous trajectory, solution to the stochastic differential equation
\begin{equation}\label{eq:model}
\forall t \in [0,1], \qquad 
dX(t) = Y f(t) dt + (1-Y)g(t) dt +  dW(t) \,,
\end{equation}
where $(W(t))_{0 \leq t \leq 1}$ is a standard Brownian motion, and where $Y$ is a Bernoulli $\mathcal{B}(1/2)$ random variable independent from $(W(t))_{0 \leq t \leq 1}$. In particular, in the sample $\mathcal{S}_n$, trajectories $X_i$ labeled with $Y_i=1$ correspond to observations of the signal $f$, while trajectories $X_i$ labeled with $Y_i=0$ correspond to $g$.  \\

The white noise model has played a key role in statistical theoretical developments; see, \eg, the seminal contributions of \cite{IH81} in nonparametric estimation and of \cite{L90} in adaptive nonparametric estimation.
In our supervised classification setting, the goal is not to estimate $f$ and $g$ but to predict the value of $Y$ given an observed continuous trajectory $(X(t))_{0 \leq t \leq 1}$. Of course, we assume that both functions $f$ and $g$ are unknown so that the joint distribution $\mathbb{P}_{X,Y}$ of the pair $((X(t))_{t\in[0,1]},Y)$ is unknown. Without any assumption on $f$ and $g$, there is no hope to solve this problem in general. 
However, learning the functions $f$ and $g$ (and thus $\mathbb{P}_{X,Y}$) from the sample $\mathcal{S}_n$ becomes statistically feasible when $f$ and $g$ are smooth enough.
 
The functional model considered in this paper is very close to the one studied by \cite{Cadre}. Actually our setting is less general since \cite{Cadre} considered more general diffusions driven by state-dependent drift terms $t \longmapsto f(t,X(t))$ and $t \longmapsto g(t,X(t))$. We focus on a simpler model, but derive refined risk bounds (with a different approach) that generalize the worst-case bounds of \cite{Cadre}, as indicated below.

\paragraphnospace{Some notation.} We introduce some notation and definitions in order to present our contributions below. In our setting, a \emph{classifier} $\Phi$ is a measurable function, possibly depending on the sample $\mathcal{S}_n$, that maps each new input $X=(X(t))_{t\in[0,1]}$ to a label in $\{0,1\}$. The \emph{risk} associated with each classifier $\Phi$ depends on $f$ and $g$ and is defined by: 
$$ \mathcal{R}_{f,g}(\Phi) := \PR (\Phi(X) \not = Y) \,,$$
where the expectation is taken with respect to all sources of randomness (\ie, both the sample $\mathcal{S}_n$ and the pair $(X,Y)$). The goal of the learner is to construct a classifier $\hat{\Phi}$ based on the sample $\mathcal{S}_n$ that mimics the \emph{Bayes classifier}
\begin{equation}
\Phi^\star = \argmin_\Phi \mathcal{R}_{f,g}(\Phi) \,,
\label{eq:oracle}
\end{equation}
where the infimum is taken over all possible classifiers (the oracle $\Phi^\star$ is impractical since $f$ and $g$ and thus $\mathbb{P}_{X,Y}$ are unknown). We measure the quality of $\hat{\Phi}$ through its \emph{worst-case excess risk}
\begin{equation}\label{eq:max}
 \sup_{(f,g) \in \mathcal{E}} \left\{\mathcal{R}_{f,g}(\Phi) - \inf_{\Phi} \mathcal{R}_{f,g}(\Phi)\right\}
\end{equation}
over some set $\mathcal{E}$ of pairs of functions. In the sequel we focus on Sobolev classes $\Hs(R)$ (see~\eqref{def:Hs}) and consider subsets $\mathcal{E} \subseteq \mathcal{H}_s(R)^2$ parametrized by a separation lower bound $\Delta$ on $\|f-g\|$.

\paragraphnospace{Main contributions and outline of the paper.} In Section~\ref{s:model} we first state preliminary results about the margin behavior that will prove crucial in our analysis. We then make three types of contributions:
\begin{itemize}
\item In Section~\ref{s:upper} we design a classifier $\hat \Phi_{d_n}$ based on a thresholding rule. We derive an excess risk bound that generalizes the worst-case results of \cite{Cadre} but also imply faster rates when the distance $\|f-g\|$ is large. This acceleration is a consequence of the nice properties of the margin (see also, \eg, \cite{AT07} and \cite{GKM16}).
\begin{theo}[A] The classifier $\hat \Phi_{d_n}$ defined in~\eqref{eq:classif} with $d_n \approx n^{\frac{1}{2s+1}}$ has an excess risk roughly bounded by (omitting logarithmic factors and constant factors depending only on $s$ and $R$): for $n \geq N_{R,s}$ large enough,
\[
\sup_{\substack{f,g \in \Hs(R) \\ \|f-g\| \geq \Delta}} \left\{ \mathcal{R}_{f,g}(\hat\Phi_{d_n}) - \inf_{\Phi} \mathcal{R}_{f,g}(\Phi)  \right\}  \lesssim \left\{\begin{array}{ll}
	n^{-\frac{s}{2s+1}} & \textrm{if } \Delta \lesssim n^{-\frac{s}{2s+1}} \\[0.2em]
	\displaystyle \frac{1}{\Delta} n^{-\frac{2s}{2s+1}} & \textrm{if } \Delta \gtrsim n^{-\frac{s}{2s+1}}
\end{array}\right.
\]
\end{theo}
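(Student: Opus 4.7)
The plan is to combine the explicit Gaussian structure of the likelihood ratio with the margin-based reduction announced in Section~\ref{s:model}, and then plug in the standard nonparametric estimation rate over Sobolev balls. First, Girsanov's theorem together with $\PR(Y=1) = 1/2$ shows that the Bayes classifier is $\Phi^\star(X) = \indicator{T^\star(X) \geq 0}$ with linear score
\[
T^\star(X) \eqdef \int_0^1 (f-g)(t)\, dX(t) - \frac{1}{2}\left(\|f\|^2 - \|g\|^2\right),
\]
and the plug-in classifier $\hat\Phi_{d_n}$ is built from the same formula with $f,g$ replaced by the Fourier-projection estimators $\hat f_{d_n}, \hat g_{d_n}$. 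Writing $h \eqdef f-g$ and conditioning on $Y$, the score $T^\star(X)$ is Gaussian with mean $\pm\frac12\|h\|^2$ and variance $\|h\|^2$, which renders all subsequent margin quantities completely explicit in terms of the Gaussian c.d.f.

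Next I would apply the margin preliminaries of Section~\ref{s:model}. The bound $|\eta(X) - 1/2| \leq \min(|T^\star(X)|/4, 1/2)$ together with the fact that a disagreement $\hat\Phi_{d_n}(X) \neq \Phi^\star(X)$ forces $|T^\star(X)| \leq |\hat T_{d_n}(X) - T^\star(X)|$ gives two complementary inequalities: a slow-rate one, valid for any $\|h\|$,
\[
\mathcal{R}_{f,g}(\hat\Phi_{d_n}) - \mathcal{R}_{f,g}(\Phi^\star) \lesssim \E\left|\hat T_{d_n} - T^\star\right|,
\]
and a fast-rate one, valid in the large-separation regime,
\[
\mathcal{R}_{f,g}(\hat\Phi_{d_n}) - \mathcal{R}_{f,g}(\Phi^\star) \lesssim \frac{1}{\|h\|}\,\E\left(\hat T_{d_n}-T^\star\right)^{2},
\]
obtained by integrating the conditional Gaussian density of $T^\star$ against $|T^\star(X)|$ over the $O(|\hat T_{d_n}-T^\star|)$-wide strip around the origin where the signs can disagree. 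Decomposing
\[
\hat T_{d_n} - T^\star = \int(\hat h_{d_n} - h)\, dW + \left\langle \hat h_{d_n}-h,\, Yf+(1-Y)g\right\rangle - \frac12\bigl[(\|\hat f_{d_n}\|^2-\|f\|^2) - (\|\hat g_{d_n}\|^2-\|g\|^2)\bigr],
\]
the first term is centred Gaussian (conditionally on $\mathcal{S}_n$) with variance $\|\hat h_{d_n}-h\|^2$, while the remaining terms are linear or quadratic in the $\LL$-estimation errors.

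A standard bias-variance computation for the truncated Fourier projection on $\Hs(R)$, applied to the class-conditional sub-samples (of sizes concentrating around $n/2$ by Hoeffding), then yields
\[
\E\|\hat f_{d_n} - f\|^2 + \E\|\hat g_{d_n} - g\|^2 \lesssim \frac{d_n}{n} + d_n^{-2s},
\]
minimized at $d_n \asymp n^{1/(2s+1)}$ by the rate $n^{-2s/(2s+1)}$. Plugging this into the slow-rate inequality yields the bound $n^{-s/(2s+1)}$ valid for all $\Delta$; plugging it into the fast-rate inequality yields $\Delta^{-1} n^{-2s/(2s+1)}$ in the regime $\|h\|\geq \Delta \gtrsim n^{-s/(2s+1)}$. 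Taking the smaller of the two produces exactly the two-regime rate claimed in Theorem~A.

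The main technical obstacle I anticipate is proving the fast-rate inequality with the sharp $\|h\|^{-1}$ factor: this requires controlling the squared terms $\|\hat f_{d_n}\|^2-\|f\|^2$ and $\|\hat g_{d_n}\|^2-\|g\|^2$ at the same $O(d_n/n)$ scale as their expectation via a dedicated concentration step, so that the sign-cancellation event remains confined to a Gaussian strip of width $\ll \|h\|$ and one does not pay an inflated variance. The logarithmic factors mentioned in the statement should come from a high-probability event on which the estimation errors stabilize at their expected scale, combined with a standard Gaussian tail inequality on the residual stochastic part $\int(\hat h_{d_n}-h)\,dW$.
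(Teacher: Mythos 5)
Your overall plan is in the same spirit as the paper (Gaussian structure plus a margin-driven two-regime bound), but the execution follows a genuinely different route: you work directly with the linear score $T^\star$ rather than the regression function $\eta$, bypass the paper's intermediate truncated Bayes rule $\Phi_{d_n}^\star$ entirely, and aggregate bias and variance into a single one-shot quantity $\hat T_{d_n}-T^\star$. The paper instead decomposes the excess risk as $\bigl(\mathcal{R}(\hat\Phi_{d_n})-\mathcal{R}(\Phi_{d_n}^\star)\bigr)+\bigl(\mathcal{R}(\Phi_{d_n}^\star)-\mathcal{R}(\Phi^\star)\bigr)$ and treats the two pieces with Lemmas~\ref{lem:proj} and~\ref{prop:CO}, each of which splits at a \emph{fixed} level $\epsilon$: a margin term $\leq 2\epsilon\bigl(1\wedge\frac{10\epsilon}{\Delta}\bigr)$ via Proposition~\ref{thm:margin}, plus a deviation probability $\PR(|\hat\eta_d-\eta_d|>\epsilon)$ or $\PR(|\eta_d-\eta|>\epsilon)$ controlled by Gaussian/$\chi^2$ concentration, then optimizes over $\epsilon$. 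Your slow-rate inequality $\mathcal{R}(\hat\Phi)-\mathcal{R}(\Phi^\star)\lesssim\E|\hat T_{d_n}-T^\star|$ is correct as stated (it follows from $|2\eta-1|\leq|T^\star|/2$ and $|T^\star|\leq|\hat T_{d_n}-T^\star|$ on the disagreement event), and the final rate bookkeeping matches.

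However, the fast-rate inequality $\mathcal{R}(\hat\Phi_{d_n})-\mathcal{R}(\Phi^\star)\lesssim\|h\|^{-1}\,\E\bigl(\hat T_{d_n}-T^\star\bigr)^2$ has a genuine gap. You justify it by ``integrating the conditional Gaussian density of $T^\star$ over the $O(|\hat T_{d_n}-T^\star|)$-wide strip'', treating the strip width as if it were independent of $T^\star$. It is not: both $T^\star(X)$ and $\hat T_{d_n}(X)-T^\star(X)=\int(\hat h_{d_n}-h)\,dX-(\text{sample terms})$ involve the \emph{same} new-trajectory noise $W$, with $\mathrm{Cov}\bigl(\int h\,dW,\int(\hat h_{d_n}-h)\,dW\bigr)=\langle h,\hat h_{d_n}-h\rangle$ generically nonzero. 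One cannot integrate a Gaussian density over a random interval that is correlated with the variable being integrated. The argument can be repaired, but it needs two additions you do not flag: (i) decomposing $\hat h_{d_n}-h$ into its component along $h$ and the orthogonal complement, so as to condition on the independent part only (and absorbing the collinear component into a shrinkage of the effective standard deviation, which requires $\|\hat h_{d_n}-h\|\leq\|h\|/2$); and (ii) restricting to the high-probability event where $\|\hat h_{d_n}-h\|\lesssim n^{-s/(2s+1)}\leq\Delta$, with the complementary small-probability event bounded trivially by the margin. The paper's fixed-$\epsilon$ split avoids the correlation issue altogether, which is why it is the cleaner route; the obstacle you do flag (concentration of the quadratic terms $\|\hat f_{d_n}\|^2-\|f\|^2$) is real but secondary compared to the dependence issue above.
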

\item In Section~\ref{s:minimaxfano} we derive a matching minimax lower bound (up to logarithmic factors) showing that the above worst-case bound cannot be improved by any classifier.

\begin{theo}[B]
For any number $n \geq N_{R,s}$ of observations, any classifier $\hat{\Phi}$ must satisfy (omitting again logarithmic factors and constant factors depending only on $s$ and $R$):
\[
\sup_{\substack{f,g \in \Hs(R) \\ \|f-g\| \geq \Delta}} \left\{ \mathcal{R}_{f,g}(\hat\Phi) - \inf_{\Phi} \mathcal{R}_{f,g}(\Phi)  \right\} \gtrsim \left\{\begin{array}{ll}
	n^{-\frac{s}{2s+1}} & \textrm{if } \Delta \lesssim n^{-\frac{s}{2s+1}} \\[0.2em]
	\displaystyle \frac{1}{\Delta} n^{-\frac{2s}{2s+1}} & \textrm{if } \Delta \gtrsim n^{-\frac{s}{2s+1}}
\end{array}\right.
\]
\end{theo}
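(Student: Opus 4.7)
The plan is to reduce the minimax lower bound to a multiple-hypothesis testing problem via a single hypercube construction $\{(f_\omega, g_\omega)\}_{\omega \in \{-1,+1\}^M}$, tuned so that both rate regimes emerge from the dependence of $\|u_\omega\| := \|f_\omega - g_\omega\|$ on $\Delta$. Two standard ingredients: by Girsanov's theorem, the KL per observation is $\tfrac{1}{4}(\|f-f'\|^2 + \|g-g'\|^2)$ (scaling linearly in $n$); and the excess risk admits the representation
\[
\mathcal{R}_{f,g}(\hat\Phi) - \mathcal{R}_{f,g}(\Phi^\star_{f,g}) = \E_{f,g}\bigl[|2\eta_{f,g}(X) - 1|\, \mathds 1_{\hat\Phi(X) \neq \Phi^\star_{f,g}(X)}\bigr],
\]
with Bayes classifier $\Phi^\star_{f,g}(X) = \mathds 1_{h_{f,g}(X) > 0}$, $h_{f,g}(X) = \int_0^1 u\, dX - \tfrac{1}{2}(\|f\|^2 - \|g\|^2)$, and the key identity $h_{f,g}(X)\mid Y \sim \mathcal N((2Y-1)\|u\|^2/2, \|u\|^2)$ (so $|2\eta - 1| = |\tanh(h/2)|$).

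For the hypercube, take disjointly supported Sobolev-localized bumps $\tilde\phi_k(t) = h^s \psi((t-x_k)/h)$ with $M = 1/h \asymp n^{1/(2s+1)}$, satisfying $\|\tilde\phi_k\|_2 \asymp h^{s+1/2}$ and $\|\sum_k \omega_k \tilde\phi_k\|_{H^s} = O(1)$ for any $\omega \in \{-1,+1\}^M$. Set $f_\omega \equiv f_0$ and $g_\omega = f_0 - \Delta e_0 - L \sum_k \omega_k \tilde\phi_k$, with a small constant $L>0$, a unit-norm $e_0$ orthogonal to every $\tilde\phi_k$, and $f_0$ chosen so that $(f_\omega, g_\omega) \in \Hs(R)^2$. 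Then $\|u_\omega\|^2 = \Delta^2 + L^2 h^{2s}$, ensuring the separation $\|u_\omega\| \geq \Delta$, and the KL between adjacent hypotheses $\omega \sim \omega^{(k)}$ over $n$ samples is $nL^2 h^{2s+1} = O(L^2)$, Assouad-compatible for $L$ small. A short Girsanov-type calculation shows that under the true law $P_\omega$,
\[
h_{\omega^{(k)}}(X) - h_\omega(X) = Y\langle\delta, u_\omega\rangle + \int_0^1 \delta\, dW + \tfrac{1}{2}\|\delta\|^2, \qquad \delta = -2L\omega_k \tilde\phi_k,
\]
so this increment is approximately a centered Gaussian of standard deviation $\|\delta\| \asymp Ln^{-1/2}$, nearly independent of $h_\omega$ (correlation $\asymp Ln^{-1/2}/\|u_\omega\|$ is small in both regimes). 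Combined with the fact that the Gaussian density of $h_\omega\mid Y$ is of order $1/\|u_\omega\|$ near the origin, this yields a per-cell excess-risk lower bound $\mathcal E_\omega(\Phi^\star_{\omega^{(k)}}) \gtrsim L^2/(n\|u_\omega\|)$. An Audibert--Tsybakov-style coordinate-wise aggregation then multiplies by $M$, giving
\[
\inf_{\hat\Phi} \sup_\omega \mathcal E_\omega(\hat\Phi) \gtrsim \frac{L^2\, n^{-2s/(2s+1)}}{\max(\Delta,\, n^{-s/(2s+1)})},
\]
which is exactly the claimed two-regime rate.

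The technical heart of the proof will be making the coordinate-wise aggregation rigorous: classification excess risk is not additive since a classifier is a single $\{0,1\}$-valued map on path space, so a direct Assouad-type Hamming argument does not apply. The standard remedy, following the framework of \cite{AT07}, is to associate each bit $\omega_k$ with a localized projection of the data that reveals $\omega_k$ while leaving the other bits approximately undetermined, and then to apply Fano's inequality coordinate by coordinate. Controlling the joint Gaussian law of $(h_\omega, \int_0^1 \delta\, dW)$ uniformly across the hypercube is where the bulk of the bookkeeping lies.
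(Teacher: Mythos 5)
Your hypercube construction and per-pair heuristic are essentially the paper's own: fixing one drift, putting a ``spike'' of length $\Delta$ along a direction orthogonal to $M\asymp n^{1/(2s+1)}$ small Sobolev perturbations, so that $\|u_\omega\|^2=\Delta^2+L^2h^{2s}$ enforces the separation while $n\cdot\mathrm{KL}\asymp L^2$ keeps the hypotheses indistinguishable, is exactly the set $\Theta\subseteq\{\Delta\}\times\{-\epsilon,\epsilon\}^{d-1}$ used in the proof (with Fourier coefficients instead of localized bumps, which is immaterial). Your per-sample KL identity $\tfrac14(\|f-f'\|^2+\|g-g'\|^2)$ and the margin-density estimate $\asymp 1/\|u_\omega\|\cdot e^{-c\|u_\omega\|^2}$ near $h=0$ are both correct, and they do yield the right two-regime rate after aggregating over the hypercube.

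The gap, however, is not ``bookkeeping'' but a structural obstruction to the aggregation step you defer. The Audibert--Tsybakov coordinate-wise scheme requires the perturbations to be localized in the space where the \emph{design} lives: each bit $\omega_k$ owns a disjoint cell of design space, the excess risk decomposes over cells, and Assouad/Fano apply cell by cell. Your bumps $\tilde\phi_k$ are disjoint in time $[0,1]$, but in trajectory space they all enter through the \emph{single scalar} $h_\omega(X)=\int_0^1 u_\omega\,dX-\tfrac12(\|f\|^2-\|g\|^2)$ that defines the Bayes boundary. There is no partition of path space into bit-indexed cells, so $\E_\omega[|2\eta_\omega(X)-1|\mathds 1_{\hat\Phi(X)\neq\Phi^\star_\omega(X)}]$ does not decompose over $k$, and the remedy you sketch (``localized projection of the data that reveals $\omega_k$'') has no analogue on the test trajectory $X$. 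The paper's proof sidesteps this entirely with a single global Fano argument: the excess risk is first lower-bounded by $\delta\bigl(\|\hat\Phi-\Phi_\theta\|_{L^1(Q_0)}-O(\delta/\Delta)\bigr)$ via the margin bound, then turned into a testing problem by selecting $\hat\theta\in\argmin_\theta\|\hat\Phi-\Phi_\theta\|_{L^1(Q_0)}$, and Fano is applied once over the Varshamov--Gilbert set. The missing ingredient in your plan---the real content of the proof---is the geometric lemma (Lemma~\ref{lem:lowerbound-packing}) showing the Bayes classifiers in $\Theta$ are pairwise $L^1(Q_0)$-separated: the angle between the linear decision hyperplanes for $\theta\neq\theta'$ is $\gtrsim\sqrt{d}\,\epsilon/\Delta$, and the standard Gaussian mass of the resulting symmetric double cone (apex at distance $\lesssim\Delta$ from the origin) is $\gtrsim$ the angle times $e^{-\Delta^2}$. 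Note the scaling difference: the $L^1$-separation grows like $\sqrt{M}$ (it is an angle), and the optimal margin cutoff $\delta\asymp\sqrt{d}\,\epsilon\,e^{-\Delta^2}$ supplies the other $\sqrt{M}$, so the final $d\epsilon^2/\Delta$ rate matches your heuristic ``per-bit times $M$'' without ever needing additivity. You should replace your coordinate-wise Fano with this $L^1$-packing plus global Fano to close the proof.
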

\item Finally, in Section~\ref{s:knn}, we show that the well-known $k$-nearest neighbors rule tuned in a classical and optimal way (see, \eg, \cite{S12,GKM16}) is far from optimality in our specific functional setting.
\begin{theo}[D]
For any threshold (dimension) $\hat{d} \in \mathbb{N}^*$ based on a sample-splitting policy, and for the optimal choice $k_n^{opt}(\hat{d})=\lfloor n^{4/(4+\hat{d})}\rfloor$, the $\hat{d}$-dimensional $k_n^{opt}(\hat{d})$-nearest neighbors classifier $\Phi_{\textrm{NN}}$ suffers a logarithmic excess risk in the worst case:
$$\sup_{f,g \in \Hs(r)} \Bigl\{ \mathcal{R}_{f,g}(\Phi_{\textrm{NN}}) - \inf_{\Phi} \mathcal{R}_{f,g}(\Phi) \Bigr\} \gtrsim \log(n)^{-2s} \,.$$
\end{theo}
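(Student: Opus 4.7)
The plan is to combine two obstructions to the $\hat d$-dimensional $k$-NN classifier, meeting at a critical dimension $D_n \asymp \frac{2}{s}\log n/\log\log n$ calibrated so that $n^{-4/(4+D_n)} \asymp \log(n)^{-2s}$. Below this threshold the classifier cannot access the high-frequency content of $(f,g)$; above it the curse of dimensionality slows down $k$-NN. In either regime the worst-case excess risk gets forced down to order $\log(n)^{-2s}$.

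I would first construct, in a fixed orthonormal basis $(\phi_k)_{k\geq 1}$ of $\LL([0,1])$, a pair $(f^\star,g^\star) \in \Hs(R)^2$ whose Fourier coefficients agree outside the frequency band $[D_n, 2D_n]$ and satisfy $|\langle f^\star - g^\star, \phi_k\rangle| \asymp D_n^{-s-1/2}$ on this band, so that $\|f^\star - g^\star\| \asymp D_n^{-s}$ while both functions remain in $\Hs(R)$. Under $(f^\star,g^\star)$, the Fourier coordinates of $X$ with index $<D_n$ are $\cN(0,1)$ and independent of $Y$, so any classifier depending only on the first $\hat d<D_n$ such coordinates has risk exactly $1/2$. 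Combined with the Bayes-risk computation $R^\star = 1 - \Phi_\cN(\|f^\star-g^\star\|/2)$, where $\Phi_\cN$ denotes the Gaussian CDF, this forces an excess risk of order $\|f^\star-g^\star\| \asymp D_n^{-s}$, which largely dominates $\log(n)^{-2s}$.

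For the complementary regime $\hat d \geq D_n$ I would invoke the dimension-dependent $k$-NN lower bound from \cite{GKM16}: in fixed dimension $d$, for the smoothness and margin parameters matching the tuning $k_n^{opt}(d) = n^{4/(4+d)}$ (smoothness $\beta=2$ and Tsybakov margin $\alpha=1$, the latter guaranteed by the margin analysis of Section~\ref{s:model}), there is a hard regression problem on which $k$-NN incurs excess risk $\gtrsim n^{-4/(4+d)}$. Embedding such an instance into the first $D_n$ Fourier coefficients of a pair $(f^\sharp, g^\sharp) \in \Hs(R)^2$, with the remaining coefficients set to zero, and using (a) the monotonicity of $d \mapsto n^{-4/(4+d)}$ and (b) the fact that appending pure-noise coordinates only worsens nearest-neighbor matching in Euclidean metric, the $\hat d$-dimensional $k$-NN on $(f^\sharp, g^\sharp)$ suffers excess risk $\gtrsim n^{-4/(4+D_n)} \asymp \log(n)^{-2s}$ whenever $\hat d \geq D_n$.

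Finally, I would combine the two instances through the sample-splitting hypothesis on $\hat d$: since $\hat d$ is built from the first half of $\mathcal{S}_n$ and is therefore independent of the $k$-NN step, one may condition on it. Either $\Pr_{(f^\star,g^\star)}(\hat d < D_n) \geq 1/2$, in which case the first construction gives excess risk $\gtrsim D_n^{-s}/2 \sim (\log n / \log\log n)^{-s}$, larger than $\log(n)^{-2s}$; or $\Pr_{(f^\sharp,g^\sharp)}(\hat d \geq D_n) \geq 1/2$, in which case the second gives $\gtrsim \log(n)^{-2s}/2$. The supremum over $(f,g)\in\Hs(R)^2$ then yields the claim. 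The main obstacle is the transfer of GKM16's Euclidean $k$-NN lower bound into the functional setting: one has to (i) keep the hard regression instance inside $\Hs(R)$, (ii) match the Tsybakov margin condition derived in Section~\ref{s:model}, and (iii) quantify the noise-coordinates curse of dimensionality producing the required monotonicity in $\hat d$. A secondary delicate point is to rule out that the dichotomy fails for both candidate instances at once, which may require a richer family of hard pairs $(f^D,g^D)_{D\sim D_n}$ and a Fano-type averaging argument.
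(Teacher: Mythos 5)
Your overall picture is right---the critical dimension $D_n \asymp \log n / \log\log n$, calibrated so that $n^{-4/(4+D_n)} \asymp \log(n)^{-2s}$, is exactly the bottleneck, and the tension between truncation loss and the curse of dimensionality drives the logarithmic rate. But the route through two distinct hard pairs $(f^\star,g^\star)$, $(f^\sharp,g^\sharp)$ and a probability dichotomy over $\hat d$ does not logically close, and what you flag as ``a secondary delicate point'' is in fact the central gap: the two alternatives ``$\Pr_{(f^\star,g^\star)}(\hat d<D_n)\geq 1/2$'' and ``$\Pr_{(f^\sharp,g^\sharp)}(\hat d\geq D_n)\geq 1/2$'' are not exhaustive. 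Nothing prevents the data-driven $\hat d$ from being large under $(f^\star,g^\star)$ and small under $(f^\sharp,g^\sharp)$, in which case neither instance yields a lower bound. Patching this with a Fano-type average over a richer family is nontrivial extra work your proposal does not carry out.

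The paper avoids the dichotomy altogether by constructing a \emph{single} pair $(f,g)\in\Hs(R)^2$ whose Fourier tails decay at the boundary rate, so that $\Delta^2 - \Delta_d^2 \asymp d^{-2s}$ for \emph{every} $d$ simultaneously (here $\Delta_d = \|\Pi_d(f-g)\|$). For this one pair the excess risk decomposes as
\[
\mathcal{R}_{f,g}(\Phi_{k,n,d})-\mathcal{R}_{f,g}(\Phi^\star) = \bigl[\mathcal{R}_{f,g}(\Phi_{k,n,d})-\mathcal{R}_{f,g}(\Phi_d^\star)\bigr] + \bigl[\mathcal{R}_{f,g}(\Phi_d^\star) - \mathcal{R}_{f,g}(\Phi^\star)\bigr],
\]
and both terms are lower bounded for every $d$: the estimation error by Proposition~\ref{theo:knngauss} (proved \emph{directly} in the $d$-dimensional Gaussian translation model via \cite{chaudhuri}, not imported from \cite{GKM16}, so the membership-in-$\Hs(R)$ and margin-matching issues you list as obstacles never arise), giving $\gtrsim n^{-4/(d+4)}$; the approximation error by an explicit Gaussian tail computation, giving $\gtrsim d^{-2s}$. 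Since both bounds hold for the same $(f,g)$ uniformly in $d$, conditioning on the sample-split $\hat d$ immediately yields the excess risk $\gtrsim \min_{d}\{d^{-2s}+n^{-4/(d+4)}\} \asymp \log(n)^{-2s}$, with no case analysis on $\hat d$ and no need to rule out a failed dichotomy. The missing idea in your proposal is precisely this: choose one hard instance whose approximation error already punishes small $\hat d$ while the $k$-NN estimation error already punishes large $\hat d$, and then the two obstructions automatically add for whichever $\hat d$ the learner selects.
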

\end{itemize} 

\ \\
Most proofs are postponed to Appendix~\ref{s:A} (for the upper bounds) and to Appendix~~\ref{s:minimaxlowerbound} (for the lower bounds). \\

\paragraphnospace{Other useful notation.} We denote the joint distribution of the pair $((X(t))_{t\in[0,1]},Y)$ by $\mathbb{P}_{X,Y}$, and write $\mathbb{P}_{\otimes^n} = P_{X,Y}^{\otimes n}$ for the joint distribution of the sample $(X_i,Y_i)_{1 \leq i \leq n}$. For notational convenience, the measure $\mathbb{P}$ will alternatively stand for $\mathbb{P} = \mathbb{P}_{X,Y} \times \mathbb{P}_{\otimes^n}$ (we integrate over both the sample $\mathcal{S}_n$ and the pair $(X,Y)$) or for any other measure made clear by the context. The distribution of $(X(t))_{t\in[0,1]}$ will be denoted by $\mathbb{P}_X$, while the distribution of $(X(t))_{t\in[0,1]}$ \textit{conditionally} on the event $\lbrace Y=1 \rbrace$ (resp. $\lbrace Y=0 \rbrace$) will be written as $\mathbb{P}_f$ (resp. $\mathbb{P}_g$). 

We write $\LL([0,1])$ for the set of square Lebesgue-integrable functions $f$ on $[0,1]$, with $\LL$-norm $\|f\| = \bigl(\int_{0}^1 f^2(t) d t\bigr)^{1/2}$ and inner product $\langle f,g \rangle = \int_{0}^1 f(t) g(t) d t$. With a slight abuse of notation, when $X$ is a solution of~\eqref{eq:model} and $\varphi \in \LL([0,1])$, we set $\langle \varphi,X \rangle = \int_{0}^1 \varphi(t) dX(t)$.

Finally we write $\Ber(p)$ or simply $\mathcal{B}(p)$ for the Bernoulli distribution of parameter $p \in [0,1]$, as well as $\mathcal{B}(n,p)$ for the binomial distribution with parameters $n \in \N^*$ and $p \in [0,1]$. We also set $x \wedge y = \min\{x,y\}$ for all $x,y \in \R$.

\section{Preliminary results}
\label{s:model}

\subsection{Bayes classifier}
\label{sec:BayesRule}
We start by deriving an explicit expression for the optimal classifier $\Phi^\star$ introduced in (\ref{eq:oracle}). This optimal classifier is known as \emph{the Bayes classifier} of the classification problem
(see, \eg, \cite{G78,DGL96}).

Let $\mathbb{P}_0$ denote the Wiener measure on the set of continuous functions on $[0,1]$.
It is easy to check that the law of $X \vert Y$ is absolutely continuous with respect to $\mathbb{P}_0$ (see, \eg, \cite{IW_1989}). Indeed, for any continuous trajectory $X$, the Girsanov formula implies that the density of $\PR_f$ (\ie, of $X|\{Y=1\}$) with respect to the reference measure $\mathbb{P}_0$ is given by
\begin{equation}
q_f(X) := 
\frac{d \mathbb{P}_f}{d \mathbb{P}_0}(X) = \exp \left(\int_{0}^1 f(s) dX_s - \frac{1}{2}\|f\|^2 \right) \,.
\label{eq:qf}
\end{equation}
Similarly, the density of $\PR_g$ (\ie, of $X|\{Y=0\}$) with respect to $\mathbb{P}_0$ is
\begin{equation}
q_g(X) := \frac{d \mathbb{P}_g}{d \mathbb{P}_0}(X) = \exp \left(\int_{0}^1 g(s) dX_s - \frac{1}{2}\|g\|^2 \right) \,.
\label{eq:qg}
\end{equation}
In the sequel we refer to $q_f$ and $q_g$ as the likelihood ratios of the models $\PR_f$ and $\PR_g$ versus $\PR_0$. Now,  using the Bayes formula, we can easily see that the regression function $\eta$ associated with~(\ref{eq:model}) is given by
\begin{eqnarray}
\eta(X) &:=& \mathbb{E}[Y \vert X] = \PR(Y=1 \vert X) \nonumber \\
&= &\frac{\frac{d \PR_f}{d \PR_0}(X)}{ \frac{d \PR_f}{d \PR_0}(X)+\frac{d \PR_g}{d \PR_0}(X)} \nonumber \\
& = & \frac{\exp\left(\int_{0}^1 (f(s)-g(s)) dX_s - \frac{1}{2}\|f\|^2 + \frac{1}{2}\|g\|^2\right)}{1+\exp\left(\int_{0}^1 (f(s)-g(s)) dX_s - \frac{1}{2}\|f\|^2 + \frac{1}{2}\|g\|^2\right)} \;.
\label{eq:eta}
\end{eqnarray}
As an example, if we assume that we observe $dX(t)=f(t) dt$ with $X(0)=0$, then $\eta(X) = \frac{\exp \left( \frac{1}{2} \|f-g\|^2 \right)}{1+\exp \left( \frac{1}{2} \|f-g\|^2 \right)}$, which is larger than or equal to $1/2$ and gets closer to $1$ when $\|f-g\|$ increases. Roughly speaking, this means in that example that the distribution $\mathbb{P}_f$ is more likely than the distribution $\mathbb{P}_g$, which is consistent with the definition of the model given by~\eqref{eq:model}.

\ \\
The Bayes classifier $\Phi^\star$ of the classification problem is then given by
\begin{equation}\label{eq:bayes_rule}
\Phi^{\star}(X) := 
\mathds{1}_{\left\lbrace \eta(X)\geq \frac12 \right\rbrace} =
\mathds{1}_{\left\lbrace \int_{0}^1 (f(s)-g(s)) dX_s \geq  \frac{1}{2}\|f\|^2 - \frac{1}{2}\|g\|^2  \right\rbrace} \;.
\end{equation}
It is well known that the Bayes classifier $\Phi^{\star}$ corresponds to the optimal classifier of the considered binary classification problem (see, \eg, \cite{DGL96}) in the sense that it satisfies~\eqref{eq:oracle}. In particular, for any other classifier~$\Phi$, the excess risk of classification is given by
\begin{equation}\label{eq:excess_risk}
\mathcal{R}_{f,g}(\Phi)-\mathcal{R}_{f,g}(\Phi^{\star}) = \mathbb{E} \left[ \left|2 \eta(X)-1\right| \mathds{1}_{\Phi(X) \neq \Phi^{\star}(X)}\right] \,.
\end{equation}

\noindent
In our statistical setting, the functions $f$ and $g$ are unknown so that it is impossible to compute the oracle Bayes classifier~\eqref{eq:bayes_rule}. However, we can construct an approximation of it using the sample $(X_i,Y_i)_{1 \leq i \leq n}$. In Section \ref{s:upper} we design a plug-in estimator combined with a projection step, and analyze its excess risk under a smoothness assumption on $f$ and $g$. The next result on the margin will be a key ingredient of our analysis.

\subsection{Control of the margin in the functional model}
\label{s:margin}
As was shown in earlier works on binary supervised classification (see, \eg, \cite{MT99} or \cite{AT07}), the probability mass of the region where the regression function $\eta$ is close to $1/2$ plays an important role in the convergence rates. The behaviour of the function $\eta$ is classically described by a so-called \textit{margin assumption}: there exist $\alpha \geq 0$ and $\epsilon_0,C > 0$ such that, for all $0 < \epsilon \leq \epsilon_0$,
\begin{equation}
\PR_{X} \left( \left| \eta(X) - \frac{1}{2} \right| \leq \epsilon   \right) \leq C \epsilon^\alpha \,.
\label{eq:marginassumption}
\end{equation}

\ \\
We will show in Proposition~\ref{thm:margin} which parameters $\alpha,\epsilon_0,C > 0$ are associated with Model~(\ref{eq:model}). The role of~\eqref{eq:marginassumption} is easy to understand: classifying a trajectory $X$ for which $\eta(X)$ is close to $1/2$ is necessarily a challenging problem because the events $\{Y=1\}$ and $\{Y=0\}$ are almost equally likely. This not only makes the optimal (Bayes) classifier $\mathds{1}_{\eta(X)\geq 1/2}$ error-prone, but it also makes the task of mimicking the Bayes classifier difficult. Indeed, any slightly bad approximation of $\eta$ when $\eta(X) \simeq 1/2$ can easily lead to a prediction different from $\mathds{1}_{\eta(X)\geq 1/2}$. A large value of the margin parameter $\alpha$ indicates that most trajectories $X$ are such that $\eta(X)$ is far from $1/2$: this makes in a sense the classification problem easier. \\

Our first contribution, detailed in Proposition~\ref{thm:margin} below, entails that the margin parameter associated with Model~(\ref{eq:model}) crucially depends on the distance between the functions $f$ and $g$ of interest. The proof is postponed to Appendix~\ref{s:pmargin}. \\

\begin{prop}
\label{thm:margin}
Let $X$ be distributed according to Model~(\ref{eq:model}), and set $\Delta := \| f-g \|$. Then, for all $ 0 < \epsilon \leq 1/8$, we have
$$ \PR_{X} \left( \left| \eta(X) - \frac{1}{2} \right| \leq \epsilon   \right) \leq 1 \wedge \frac{10 \epsilon}{\Delta} \,.$$
\end{prop}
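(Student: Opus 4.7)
The plan is to rewrite $\eta(X)-1/2$ as an explicit scalar function of a single log-odds random variable $Z$, turn the event $\{|\eta(X)-1/2|\le\epsilon\}$ into a concentration window on $Z$, and then exploit the fact that $Z$ is Gaussian conditionally on $Y$ to obtain an anti-concentration estimate of order $\epsilon/\Delta$.

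Concretely, from~\eqref{eq:eta} one sets
\[
Z \eqdef \int_{0}^{1}(f-g)(s)\,dX_s - \tfrac{1}{2}\|f\|^2 + \tfrac{1}{2}\|g\|^2,
\]
so that $\eta(X) = e^{Z}/(1+e^{Z})$ and $\eta(X)-\tfrac12 = \tfrac{1}{2}\tanh(Z/2)$. Inverting this sigmoid identity, $\{|\eta(X)-1/2|\le\epsilon\}$ is equivalent to $\{|Z|\le \log\tfrac{1+2\epsilon}{1-2\epsilon}\}$, and an elementary Taylor bound on $\tanh^{-1}$ on $[0,1/4]$ shows that for $\epsilon\le 1/8$ the right-hand side is at most $5\epsilon$. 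This first step is where the assumption $\epsilon\le 1/8$ enters, and it is essentially the only delicate point of the argument.

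Next one identifies the law of $Z$ under $\PR_X$. Plugging the SDE~\eqref{eq:model} into the stochastic integral and splitting on the value of $Y$ gives
\[
Z = Y\langle f-g, f\rangle + (1-Y)\langle f-g, g\rangle - \tfrac{1}{2}\|f\|^2 + \tfrac{1}{2}\|g\|^2 + \int_0^1(f-g)(s)\,dW_s,
\]
and the stochastic integral is $\mathcal{N}(0,\Delta^2)$ and independent of $Y$. A short computation using $\langle f-g,f\rangle - \tfrac12\|f\|^2 + \tfrac12\|g\|^2 = \tfrac12\Delta^2$ (and the mirror identity for $Y=0$) yields $Z \mid \{Y=1\} \sim \mathcal{N}(\Delta^2/2,\Delta^2)$ and $Z \mid \{Y=0\} \sim \mathcal{N}(-\Delta^2/2,\Delta^2)$. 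Since $Y\sim\mathcal{B}(1/2)$ and the two conditional laws are opposite of each other, $|Z|$ has the same distribution under each conditioning, so $\PR_X(|Z|\le 5\epsilon)$ equals $\PR(|\tfrac{\Delta^2}{2}+\xi|\le 5\epsilon)$ with $\xi\sim\mathcal{N}(0,\Delta^2)$.

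Finally, a change of scale by $1/\Delta$ shows that this last probability is the mass a standard Gaussian assigns to an interval of length $10\epsilon/\Delta$, which is bounded by $(10\epsilon/\Delta)\cdot(1/\sqrt{2\pi}) \le 10\epsilon/\Delta$ via the sup-density estimate. Intersecting with the trivial bound $1$ gives the $1\wedge(10\epsilon/\Delta)$ form stated in the proposition. To summarize, the main (mild) obstacle is the sigmoid-to-linear conversion of the first step; the rest is the Girsanov computation already done in Section~\ref{sec:BayesRule} when deriving $\eta$, together with a single-line Gaussian anti-concentration estimate.
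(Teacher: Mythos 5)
Your proof is correct, and it is a cleaner variant of the paper's argument. The paper works with the likelihood ratios $q_f,q_g$ directly, splits on the sign of $q_f-q_g$, and then upper-bounds the event $\{|\eta(X)-1/2|\le\epsilon\}$ by the union of $\{|q_f/q_g-1|\le 4\epsilon\}$ and $\{|q_g/q_f-1|\le 4\epsilon\}$ before taking logarithms and using $\ln(1+4\epsilon)\le 4\epsilon$ and $\ln(1-4\epsilon)\ge -8\epsilon$; both pieces are then bounded by $5\epsilon/\Delta$, for a total of $10\epsilon/\Delta$. You instead pass directly to the log-odds $Z = \log(q_f/q_g)$, use the exact identity $\{|\eta(X)-\tfrac12|\le\epsilon\}=\{|Z|\le 2\tanh^{-1}(2\epsilon)\}$, bound $2\tanh^{-1}(2\epsilon)\le 5\epsilon$ for $\epsilon\le 1/8$, and observe that $Z\mid Y=1$ and $Z\mid Y=0$ are reflections of one another, so $|Z|$ has a single well-identified law. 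This avoids the case split and the (redundant, overlapping) union bound; your final estimate is actually tighter by a factor $\sqrt{2\pi}$, which you then discard to match the stated constant. Both proofs hinge on the same two facts --- Girsanov to identify the conditional Gaussian law of the log-likelihood-ratio integral, and the uniform Gaussian density bound $1/\sqrt{2\pi}$ for anti-concentration --- so they are the same proof in spirit; yours is simply better organized around the monotone $\eta\mapsto Z$ change of variable, which removes the slack in the paper's argument.

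Two small checks worth recording: the inequality $2\tanh^{-1}(2\epsilon)\le 5\epsilon$ does hold on $(0,1/8]$ (at the endpoint, $\log(5/3)\approx 0.511<0.625$, and the ratio $2\tanh^{-1}(2\epsilon)/(4\epsilon)$ is increasing), and the mean computation $\langle f-g,f\rangle-\tfrac12\|f\|^2+\tfrac12\|g\|^2=\tfrac12\Delta^2$ is correct, so the identification $Z\mid\{Y=1\}\sim\mathcal N(\Delta^2/2,\Delta^2)$ is right.
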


 In particular, if the distance $\| f-g \|$  is bounded from below by a positive constant, then~\eqref{eq:marginassumption} is satisfied with a margin parameter $\alpha=1$. If, instead, $\|f-g\|$ is allowed to be arbitrarily small, then nothing can be guaranteed about the margin parameter (except the obvious value $\alpha=0$ that always works).


\section{Upper bounds on the excess risk}
\label{s:upper}
In this section we construct a classifier with nearly optimal excess risk.
We detail its construction in Section~\ref{sec:pluginFinitedim} and analyze its approximation and estimation errors in Sections~\ref{sec:trunc} and~\ref{sec:estimationerror}. Our main result, Theorem \ref{prop:minmax}, is stated in Section \ref{subsection:upper_bound}. Nearly matching lower bounds will be provided in Section~\ref{s:lower}.

\subsection{A classifier in a finite-dimensional setting}
\label{sec:pluginFinitedim}

Our classifier---defined in Section~\ref{sec:defclassifier} below---involves a projection step with coefficients $\theta_j$ and $\mu_j$ introduced in Section~\ref{sec:HilbertWhitenoise} and estimated in Section~\ref{sec:estimatethetamu}.

\subsubsection{$\LL$ orthonormal basis $(\varphi_j)_{j \geq 1}$ and white noise model}
\label{sec:HilbertWhitenoise}

\textit{Orthonormal basis.} Let $(\varphi_j)_{j \in \mathbb{N}^*}$ be any orthonormal basis of $\LL([0,1])$, and $d \in \mathbb{N}^*$ be some dimension that will be chosen as a function of the size $n$ of the sample (for projection purposes). In the sequel, the coefficients $(c_j(h))_{j \geq 1}$ of any function $h \in \LL([0,1])$ w.r.t.~the basis $(\varphi_j)_{j \geq 1}$ are defined by
$$
c_j(h) := \langle \varphi_j, h \rangle = \int_{0}^{1} h(s) \varphi_j(s) ds \,, \qquad j \geq 1 \,,
$$
and its $\LL$-projection onto  $\mathrm{Span}\left(  \varphi_j , \ 1 \leq j \leq d  \right)$ is given by
\begin{equation}\label{eq:defPid}
\Pi_d(h) = \sum_{j=1}^d c_j(h) \varphi_j \,.
\end{equation}
In particular, we will pay a specific attention to the coefficients of $f$ and $g$ involved in~\eqref{eq:model},
\begin{equation}\label{eq:defckfckg}
\forall j \geq 1, \qquad 
\theta_j := c_j(f) \qquad \text{and} \qquad \mu_j:= c_j(g) \,,
\end{equation}
and to their $d$-dimensional projections $f_d := \Pi_d(f) = \sum_{j=1}^d \theta_j \varphi_j$ and $g_d:=\Pi_d(g) = \sum_{j=1}^d \mu_j \varphi_j$.

\ \\
\textit{White noise model.} We now make a few comments on the white noise model considered in~\eqref{eq:model} (see also \cite{IH81} for further details and its link with the infinite Gaussian sequence model). First note that, for all $\varphi \in \LL([0,1])$, almost surely,
$$
\int_{0}^1 \varphi(t) dX(t) = 
Y \int_{0}^1 f(t) \varphi(t) dt + (1-Y) \int_{0}^1 g(t) \varphi(t) dt + \int_{0}^1 \varphi(t) dW(t) \,.
$$
Recall that, with a slight abuse of notation, we write $\langle \varphi,X \rangle := \int_{0}^1 \varphi(t) dX(t)$. The above almost sure equality implies that the conditional distribution of $\langle \varphi,X \rangle$ given $Y$ is Gaussian with expectation $\langle \varphi,f \rangle \mathds{1}_{Y=1} + \langle \varphi,g \rangle \mathds{1}_{Y=0}$ and variance $\|\varphi\|^2$. Therefore, the distribution of $\langle \varphi , X\rangle$ is a mixture of two Gaussian distributions:
$$\frac{1}{2} \mathcal{N}\Bigl(\langle \varphi,f\rangle,\|\varphi\|^2\Bigr) + \frac{1}{2} \mathcal{N}\Bigl(\langle\varphi,g\rangle,\|\varphi\|^2\Bigr) \,.$$

\ \\
An important feature of the white noise model is that the coefficients $\bigl(\langle \varphi_j,W \rangle\bigr)_{j \geq 1}$ associated with different frequencies of the standard Brownian motion are independent. This is because they are jointly Gaussian, with a diagonal infinite covariance matrix:
$$\forall j \neq j',  \qquad \mathbb{E} \Big[\langle \varphi_j,W \rangle \langle \varphi_{j'},W \rangle\Big] = \int_{0}^1 \varphi_j(t) \varphi_{j'}(t) d t = 0 \,.$$
The above remarks imply together with~\eqref{eq:defckfckg} that
\begin{equation}
\label{eq:Gaussiancoeffs}
\forall j \geq 1, \qquad \mathcal{L}\left( \langle \varphi_j,X \rangle|Y=1\right) = \mathcal{N}(\theta_j,1) \qquad \text{and} \qquad  \mathcal{L}\left(\langle \varphi_j,X \rangle|Y=0\right) =  \mathcal{N}(\mu_j,1) \,,
\end{equation}
and that the coefficients $\bigl(\langle \varphi_j,X \rangle\bigr)_{j \geq 1}$ are conditionally independent given $Y$.

\subsubsection{Estimation of $(\theta_j)_{1 \leq j \leq d}$ and $(\mu_j)_{1 \leq j \leq d}$}
\label{sec:estimatethetamu}

In order to estimate the $\theta_j$ and 
$\mu_j$, we split the sample $(X_i,Y_i)_{1 \leq i \leq n}$ into two subsamples corresponding to either $Y_i=0$ or $Y_i=1$. More formally, we define the two subsamples $(X^0_i)_{1 \leq i \leq N_0}$ and $(X^1_i)_{1 \leq i \leq N_1}$ (one of which can be empty) by
\[
\left\{\begin{array}{l}
	X^0_i := X_{\tau^0_i} \,, \quad 1 \leq i \leq N_0 \\
	X^1_i := X_{\tau^1_i} \,, \quad 1 \leq i \leq N_1
\end{array}\right.
\]
where
\begin{equation}\label{eq:defN0N1}
 N_0 := \sum_{i=1}^n \mathds{1}_{\lbrace Y_i = 0\rbrace} \quad \mathrm{and} \quad N_1 := \sum_{i=1}^n \mathds{1}_{\lbrace Y_i = 1\rbrace} \,,
\end{equation}
and where $\tau^k_i$ is the index $t \in \{1,\ldots,n\}$ such that $Y_t = k$ for the $i$-th time, \ie, for all $k \in \{0,1\}$ and $i \in \{1,\ldots,N_k\}$,
\[
\tau^k_i := \min \left\{t \in \{1,\ldots,n\} : \; \sum_{t'=1}^t \mathds{1}_{\{Y_{t'}=k\}} \geq i \right\} \,.
\]
The sizes $N_0$ and $N_1$ are random variables; they satisfy $N_0 + N_1 = n$ and both have a binomial distribution $\mathcal{B}(n,1/2)$. In particular, both subsamples have (with high probability) approximately the same sizes.

\ \\
Note from~\eqref{eq:model} that the two subsamples $(X^0_i)_{1 \leq i \leq N_0}$ and $(X^1_i)_{1 \leq i \leq N_1}$ correspond to observations of the functions $g$ and $f$ respectively.  Following our comments from Section~\ref{sec:HilbertWhitenoise}, it is natural to define the random coefficients $(X_{i,j}^0)_{\substack{1 \leq i \leq N_0\\ 1 \leq j \leq d}}$ and $(X_{i,j}^1)_{\substack{1 \leq i \leq N_1\\ 1 \leq j \leq d}}$ by
\begin{equation}\label{eq:defX0X1}
\left\lbrace  
\begin{array}{l}
X_{i,j}^0 := \langle \varphi_j, X_i^0 \rangle  = \mu_j + \epsilon_{i,j}^0 \,, \quad i=1\dots N_0 \,, \\
X_{i,j}^1 := \langle \varphi_j, X_i^1 \rangle = \theta_j + \epsilon_{i,j}^1 \,, \quad i=1\dots N_1 \,,
\end{array}
\right.   \quad j\in \lbrace 1,\dots, d \rbrace ,
\end{equation}
where the dimension $d \in \mathbb{N}^*$ will be determined later (as a function of $n$), and where
$$
\epsilon_{i,j}^k = \int_{0}^1 \varphi_{j}(t) dW_{\tau^k_i}(t) \,.
$$
By independence of the random variables $Y_1,W_1,\ldots,Y_n,W_n$ used to generate the sample $(X_i,Y_i)_{1 \leq i \leq n}$ according to~\eqref{eq:model}, and by the comments made in~Section~\ref{sec:HilbertWhitenoise}, we have the following conditional independence property for the $\epsilon_{i,j}^k$.

\begin{rem}
\label{rmk:conditionallyGaussian}
Conditionally on $(Y_1,\ldots,Y_n)$, the $n d$ random variables (or any $(Y_1,\ldots,Y_n)$-measurable permutation of them)
\[
\begin{array}{l}
\epsilon^0_{1,1},\ldots,\epsilon^0_{1,d},\epsilon^0_{2,1},\ldots,\epsilon^0_{2,d},\ldots,\epsilon^0_{N_0,1},\ldots,\epsilon^0_{N_0,d}, \\
\epsilon^1_{1,1},\ldots,\epsilon^1_{1,d},\epsilon^1_{2,1},\ldots,\epsilon^1_{2,d},\ldots,\epsilon^1_{N_1,1},\ldots,\epsilon^1_{N_1,d}
\end{array}
\]
are i.i.d.~$\cN(0,1)$. As a consequence, on the event $\{N_0>0\} \cap \{N_1>0\}$, the random variables $N_k^{-1/2} \sum_{i=1}^{N_k} \epsilon^k_{i,j}$, $1 \leq j \leq d$, $k \in \{0,1\}$, are i.i.d.~$\cN(0,1)$ conditionally  on $(Y_1,\ldots,Y_n)$.
\end{rem}

\ \\
For every $j\in \lbrace 1, \dots, d \rbrace$, we use the coefficients defined in \eqref{eq:defX0X1} to estimate the coefficients $\mu_j$ and $\theta_j$ by
\begin{equation}
\label{eq:defhatthethamu}
\hat \mu_j = \mathds{1}_{\{N_0>0\}}\frac{1}{N_0} \sum_{i=1}^{N_0} X_{i,j}^0  \quad \mathrm{and} \quad \hat \theta_j = \mathds{1}_{\{N_1>0\}} \frac{1}{N_1} \sum_{i=1}^{N_1} X_{i,j}^1 \,.
\end{equation}
Note that we arbitrarily impose the value $0$ for $\hat{\mu}_j$ when $N_0=0$ or for $\hat{\theta}_j$ when $N_1=0$. This convention has a negligible impact, since with high probability $N_0$ and $N_1$ are both positive.

\subsubsection{A simple classifier}
\label{sec:defclassifier}

We now build a simple classifier using the estimators $\hat{\mu}_j$ and $\hat{\theta}_j$ defined in~\eqref{eq:defhatthethamu}. After observing a new trajectory $X = (X(t))_{0 \leq t \leq 1}$, we construct the vector $\mathbf{X}_d \in \R^d$ defined by 
$$
\mathbf{X}_d := \Bigl(\langle \varphi_1, X \rangle, \ldots, \langle \varphi_d, X \rangle\Bigr) \,.
$$ 
Then, we assign the label $1$ to the trajectory $X$ if $\mathbf{X}_d$ is closer to $\hat{\theta} := (\hat\theta_1,\dots,\hat\theta_d)$ than to $\hat{\mu}:=(\hat\mu_1,\dots,\hat\mu_d)$, and the label $0$ otherwise. More formally, our classifier $\hat\Phi_d$ is defined for all trajectories $X$ by
\begin{equation}
 \hat \Phi_d(X) = \left\lbrace
\begin{array}{lcr}
1 & \mathrm{if} &  \| \mathbf{X}_d - \hat \theta \|_d \leq \| \mathbf{X}_d- \hat \mu \|_d  \\
0 & \mathrm{if} & \| \mathbf{X}_d - \hat \theta \|_d > \| \mathbf{X}_d- \hat \mu \|_d 
\end{array}
\right.  ,
\label{eq:classif}
\end{equation}
where $\| x \|_d = \sqrt{\sum_{j=1}^d x_j^2}$ denotes the Euclidean norm in $\mathbb{R}^d$; we also write $\langle x, y \rangle_d = \sum_{j=1}^d x_j y_j$ for the associated inner product. 

\paragraphnospace{Reinterpretation as a plug-in classifier}
We now explain why $\hat\Phi_d$ can be reinterpreted as a plug-in classifier in a truncated space. Recall the expression~\eqref{eq:eta} for the regression function $\eta$.
It is thus natural to consider the 'truncated' regression function $\eta_d$ by replacing $f$ and $g$ with their projections $\Pi_d(f) = \sum_{j=1}^d \theta_j \varphi_j$ and $\Pi_d(g) = \sum_{j=1}^d \mu_j \varphi_j$, \ie,
\begin{align}
\eta_d(X) & := \frac{\exp\left(\int_{0}^1 (\Pi_d(f)(s)-\Pi_d(g)(s)) dX_s - \frac{1}{2}\|\Pi_d(f)\|^2 + \frac{1}{2}\|\Pi_d(g)\|^2\right)}{1+\exp\left(\int_{0}^1 (\Pi_d(f)(s)-\Pi_d(g)(s)) dX_s - \frac{1}{2}\|\Pi_d(f)\|^2 + \frac{1}{2}\|\Pi_d(g)\|^2\right)} \label{eq:eta_d} \\
& = \frac{\exp\left( \langle \bftheta_d - \bfmu_d , \bfX_d \rangle_d -\frac{1}{2}\|\bftheta_d\|_d^2 + \frac{1}{2}\|\bfmu_d\|_d^2 \right)}{1+\exp\left( \langle \bftheta_d - \bfmu_d, \bfX_d \rangle_d -\frac{1}{2}\|\bftheta_d\|_d^2 + \frac{1}{2}\|\bfmu_d\|_d^2 \right)} \nonumber \,,
\end{align}
where $\bftheta_d := (\theta_j)_{1 \leq j \leq d}$, and  $\bfmu_d := (\mu_j)_{1 \leq j \leq d}$. We also define the associated oracle classifier (that still depends on the unknown functions $f$ and $g$):
\begin{equation}\label{eq:bayes_rule_d}
\Phi_d^{\star}(X) := 
\mathds{1}_{\left\lbrace \eta_d(X)\geq\frac12 \right\rbrace} = \mathds{1}_{\left\lbrace 
\int_{0}^1 (\Pi_d(f)(s)-\Pi_d(g)(s)) dX_s \geq \frac{1}{2}\|\Pi_d(f)\|^2 - \frac{1}{2}\|\Pi_d(g)\|^2 \right\rbrace} \,.
\end{equation}
As shown in Remark~\ref{rmk:truncatedSpace} (see Section~\ref{sec:estimationerror} below), $\eta_d$ and $\Phi_d^{\star}$ correspond to the regression function and the Bayes classifier of the restricted classification problem where the learner has only access to the projected input $\bfX_d \in \R^d$, rather than the whole trajectory $X$. \\

We are now ready to reinterpret $\hat\Phi_d$ as a plug-in classifier. Note that
\begin{eqnarray*}
\| \mathbf{X}_d - \hat \theta \|_d \leq \| \mathbf{X}_d- \hat \mu \|_d, 
& \Longleftrightarrow & \frac{\exp\left( \frac{1}{2} \lbrace\| \mathbf{X}_d - \hat \mu \|_d^2 - \| \mathbf{X}_d- \hat \theta \|_d^2 \rbrace \right)}{1+\exp\left( \frac{1}{2} \lbrace\| \mathbf{X}_d - \hat \mu \|_d^2 - \| \mathbf{X}_d- \hat \theta \|_d^2 \rbrace\right)} \geq  \frac{1}{2} \;, \\
& \Longleftrightarrow & \hat\eta_d(X)  \geq  \frac{1}{2} \;,
\end{eqnarray*}
where the estimated regression function $\hat{\eta}_d$ is defined by
\begin{equation}\label{eq:etadestim}
\hat\eta_d(X):= \frac{\exp\left( \langle \hat\theta - \hat\mu , \bfX_d \rangle_d -\frac{1}{2}\|\hat\theta\|_d^2 + \frac{1}{2}\|\hat\mu\|_d^2 \right)}{1+\exp\left( \langle \hat\theta - \hat\mu, \bfX_d \rangle_d -\frac{1}{2}\|\hat\theta\|_d^2 + \frac{1}{2}\|\hat\mu\|_d^2 \right)} \;.
\end{equation} 
In other words, our classifier $\hat\Phi_d$ can be rewritten as $\hat\Phi_d(X) = \mathds{1}_{\lbrace \hat\eta_d(X) \geq 1/2  \rbrace}$ where $\hat\eta_d$ is an estimator of the 'truncated' regression function $\eta_d$ introduced in~\eqref{eq:eta_d}.

\paragraphnospace{Proof strategy.} In the next sections we upper bound the excess risk of $\hat\Phi_d$. We use the following classical decomposition (all quantities below are defined in Section~\ref{sec:intro}, \eqref{eq:classif}, and~\eqref{eq:bayes_rule_d}):
\[
\mathcal{R}_{f,g}(\hat{\Phi}_{d}) - \mathcal{R}_{f,g}(\Phi^\star) = \underbrace{\mathcal{R}_{f,g}(\hat{\Phi}_{d}) - \mathcal{R}_{f,g}(\Phi_d^{\star})}_{\textrm{estimation error}} +
\underbrace{\mathcal{R}_{f,g}(\Phi_d^{\star}) - \mathcal{R}_{f,g}(\Phi^\star)}_{\textrm{approximation error}} \,.
\]
The first term of the right-hand side (estimation error) measures how close $\hat{\Phi}_{d}$ is to the oracle $\Phi_d^{\star}$ in the trunctated space; we analyse it in Section~\ref{sec:estimationerror} below. The second term (approximation error) quantifies the statistical loss induced by the $d$-dimensional projection; we study it in Section~\ref{sec:trunc}.

\subsection{Approximation error}\label{sec:trunc}

We first upper bound the approximation error $\mathcal{R}_{f,g}(\Phi_d^{\star}) - \mathcal{R}_{f,g}(\Phi^\star)$, where the two oracle classifiers  $\Phi_d^{\star}$ and $\Phi^\star$ are defined by~\eqref{eq:bayes_rule_d} and~\eqref{eq:bayes_rule} respectively. Comparing the definitions of $\eta$ and $\eta_d$ in~\eqref{eq:eta} and~\eqref{eq:eta_d}, we can expect that, for $d$ large enough, $\Pi_d(f) \approx f$ and $\Pi_d(g) \approx g$, so that $\eta_d(X) \approx \eta(X)$ and therefore  $\mathcal{R}_{f,g}(\Phi_d^\star) \approx \mathcal{R}_{f,g}(\Phi^\star)$.

 Lemma~\ref{prop:CO} below quantifies this approximation. The proof is postponed to Appendix~\ref{s:pCO}. We recall that, for notational convenience, we write $f_d=\Pi_d(f)$ and $g_d=\Pi_d(g)$.

\begin{lem}
\label{prop:CO}
Let $X$ be distributed according to Model~(\ref{eq:model}), and recall that $\Delta := \|f-g\|$. Let $0<\epsilon \leq 1/8$ and $d\in \mathbb{N}^\star$ such that
\begin{equation}
\max \left( \| f-f_d\|^2,\| g-g_d \|^2   \right) \leq \frac{\epsilon^2}{512 \ln(1/\epsilon^2)}\,.
\label{eq:lemma1conditioneps}
\end{equation}
Then, the two oracle classifiers $\Phi_d^\star$ and $\Phi^\star$ defined by~\eqref{eq:bayes_rule_d} and~\eqref{eq:bayes_rule} satisfy
$$ \mathcal{R}_{f,g}(\Phi_d^\star) - \mathcal{R}_{f,g}(\Phi^\star) \leq 12 \epsilon^2 + 2 \epsilon \left( 1 \wedge \frac{10\epsilon}{\Delta} \right) \,. $$
\end{lem}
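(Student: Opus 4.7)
}
The natural starting point is the excess-risk identity~\eqref{eq:excess_risk}, applied at the oracle level: $\mathcal{R}_{f,g}(\Phi_d^\star) - \mathcal{R}_{f,g}(\Phi^\star) = \mathbb{E}\bigl[|2\eta(X)-1|\,\mathds{1}_{\Phi_d^\star(X)\neq\Phi^\star(X)}\bigr]$. I split this expectation along the margin region by introducing the indicator of $\{|\eta(X)-1/2|\leq \epsilon\}$:
\[
\mathcal{R}_{f,g}(\Phi_d^\star) - \mathcal{R}_{f,g}(\Phi^\star) \leq \mathbb{E}\!\left[|2\eta(X)-1|\mathds{1}_{|\eta(X)-1/2|\leq \epsilon}\right] + \PR\!\left(\Phi_d^\star(X)\neq \Phi^\star(X),\; |\eta(X)-1/2|>\epsilon\right).
\]

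For the first (margin) term, $|2\eta-1|\leq 2\epsilon$ on the integration set, and Proposition~\ref{thm:margin} gives $\PR_X(|\eta-1/2|\leq\epsilon) \leq 1 \wedge 10\epsilon/\Delta$, producing exactly the $2\epsilon(1\wedge 10\epsilon/\Delta)$ contribution. For the second term, I would use the key observation that $\Phi^\star$ and $\Phi_d^\star$ disagree only when $\eta$ and $\eta_d$ lie on opposite sides of $1/2$, so on that event $|\eta(X)-1/2| \leq |\eta(X)-\eta_d(X)|$; hence the event is contained in $\{|\eta(X)-\eta_d(X)|>\epsilon\}$, and it suffices to show
\[
\PR\bigl(|\eta(X)-\eta_d(X)| > \epsilon\bigr) \leq 12\epsilon^2 .
\]

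To bound this probability I compare the log-likelihood ratios underlying $\eta$ and $\eta_d$. Writing $L(X) = \int_0^1 (f-g)\,dX_s - \tfrac12\|f\|^2+\tfrac12\|g\|^2$ and similarly $L_d$, the logistic link function is $1/4$-Lipschitz, so $|\eta-\eta_d|\leq \tfrac14|L-L_d|$ and I only need $\PR(|L-L_d|>4\epsilon)\leq 12\epsilon^2$. Setting $h := (f-f_d)-(g-g_d)$ and using $\|f\|^2-\|f_d\|^2=\|f-f_d\|^2$ (Pythagoras, since $f_d$ is the $\LL$-projection of $f$), a direct computation gives
\[
L-L_d = \int_0^1 h(s)\,dX_s - \tfrac12\|f-f_d\|^2 + \tfrac12\|g-g_d\|^2 .
\]
Conditionally on $Y$, this is Gaussian with variance $\|h\|^2$. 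The orthogonality relations $\langle f-f_d,f_d\rangle = 0$ and $\langle g-g_d,\varphi_j\rangle = 0$ for $j\leq d$ collapse the conditional mean to $\tfrac12\|h\|^2$ when $Y=1$ and $-\tfrac12\|h\|^2$ when $Y=0$. The assumption~\eqref{eq:lemma1conditioneps} yields $\|h\|^2 \leq 2(\|f-f_d\|^2+\|g-g_d\|^2) \leq \epsilon^2/(128\ln(1/\epsilon^2))$, and the conditional mean has modulus at most $\tfrac12\|h\|^2 \leq \epsilon$ (say) since $\epsilon\leq 1/8$. A standard Gaussian tail bound then gives $\PR(|L-L_d|>4\epsilon\mid Y) \leq 2\exp(-c\ln(1/\epsilon^2))$ for a large constant $c$, far smaller than $12\epsilon^2$; integrating out $Y$ preserves this.

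The one step that needs care is \emph{the collapse of the conditional mean of $L-L_d$ to $\pm\tfrac12\|h\|^2$}. The bookkeeping is only mildly delicate: the cross-term $\langle g-g_d,f\rangle$ reduces to $\langle g-g_d,f-f_d\rangle$ via orthogonality, and completing the square then matches the residual terms $-\tfrac12\|f-f_d\|^2+\tfrac12\|g-g_d\|^2$ exactly to produce $\tfrac12\|(f-f_d)-(g-g_d)\|^2$. Once this identity is in hand, the Gaussian tail slack built into~\eqref{eq:lemma1conditioneps} (the $\ln(1/\epsilon^2)$ factor) is what makes the second term negligible compared to $12\epsilon^2$, and combining the two pieces yields the stated bound.
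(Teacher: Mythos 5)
Your proof is correct, and the second half takes a genuinely different and cleaner route than the paper's. The shared part is the split $T_1+T_2$ with $T_1 \leq 2\epsilon\PR(|\eta-1/2|\leq\epsilon)$ (Proposition~\ref{thm:margin}) and the reduction $T_2 \leq \PR(|\eta-\eta_d|>\epsilon)$. From there the paper decomposes $\eta-\eta_d$ algebraically in terms of the four densities $q_f,q_g,q_{f_d},q_{g_d}$, applies the triangle inequality repeatedly, and ends up controlling four separate Gaussian tail events $S_1,\dots,S_4$, each $\leq\epsilon^2$, so $T_{2,1}\leq 6\epsilon^2$. You instead observe that $\eta=\sigma(L)$, $\eta_d=\sigma(L_d)$ for the logistic link $\sigma$, which is $1/4$-Lipschitz, so $\PR(|\eta-\eta_d|>\epsilon)\leq\PR(|L-L_d|>4\epsilon)$; this collapses everything to a single Gaussian computation. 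Your bookkeeping for the conditional mean is exactly right: using $\|f\|^2-\|f_d\|^2=\|f-f_d\|^2$ and the orthogonality of $f-f_d$, $g-g_d$ to $\mathrm{Span}(\varphi_1,\dots,\varphi_d)$, the cross terms assemble into $\E[L-L_d\,|\,Y=1]=\tfrac12\|h\|^2$ with $h=(f-f_d)-(g-g_d)$, and symmetrically $-\tfrac12\|h\|^2$ for $Y=0$, with conditional variance $\|h\|^2$. With $\|h\|^2\leq\epsilon^2/(128\ln(1/\epsilon^2))$, the mean shift is negligible and the Gaussian tail gives something like $2\epsilon^{c}$ with $c$ in the hundreds, vastly below the $12\epsilon^2$ slack you need. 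The only thing worth making explicit in a write-up is the Lipschitz constant of $\sigma$ (via $\sigma'=\sigma(1-\sigma)\leq 1/4$), and the numerical check that $\tfrac12\|h\|^2\leq 2\epsilon$ so that $4\epsilon-\tfrac12\|h\|^2\geq 2\epsilon$ before applying the tail bound; both are trivial under the stated hypotheses. In short, a correct proof that replaces four tail estimates with one by the Lipschitz trick, which the paper did not use here.
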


We stress that the distance $\Delta$ between $f$ and $g$ has a strong influence on the approximation error. In particular, if $\Delta$ is bounded from below independently from $n$, then the approximation error is at most of the order of $\epsilon^2$, while it can only be controlled by $\epsilon$ if $\Delta \lesssim \epsilon$. This key role of $\Delta$ is a consequence of the margin behavior analyzed in Proposition \ref{thm:margin} (Section~\ref{s:margin}) and will also appear in the estimation error.

\paragraphnospace{A smoothness assumption.} When $d \in \N^*$ is fixed, we can minimize the bound of Lemma~\ref{prop:CO} in~$\epsilon$. Unsurprisingly the resulting bound involves the distances $ \| f-f_d\|$ and $\| g-g_d \| $ of $f$ and $g$ to their projections $f_d$ and $g_d$. In the sequel, we assume that the functions $f$ and $g$ are smooth in that their (Fourier) coefficients w.r.t.~the basis $(\varphi_j)_{j \geq 1}$ decay sufficiently fast. More precisely, we assume that, for some parameters $s,R>0$, the functions $f$ and $g$ belong to the set
\begin{equation}\label{def:Hs}
\Hs(R) := \left\{h \in \LL([0,1]) : \; \sum_{j=1}^{+\infty} c_j(h)^2 j^{2s} \leq R^2 \right\} \,.
\end{equation}
The set $\Hs(R)$ corresponds to a class of smooth functions with smoothness parameter $s$: when $s=0$, we simply obtain  the $\LL([0,1])$-ball of radius $R$. For larger $s$, for example $s=1$, we obtain a smaller Sobolev space of functions such that $f' \in \LL([0,1])$ with $\|f'\|_2 \leq R$.

\ \\
Under the above assumption on the tail of the spectrum of $f$ and $g$, the loss of accuracy induced by the projection step is easy to quantify. Indeed, for all $f \in \Hs(R)$ we have
	\[
	\|f-f_{d}\|^2 = \sum_{j=d+1}^{+\infty} c_j(f)^2 \leq d^{-2s} \sum_{j=d+1}^{+\infty} c_j(f)^2 j^{2s} \leq R^2 d^{-2s} \,,
	\]
so that, omitting logarithmic factors, $\epsilon$ can be chosen of the order of $R d^{-s}$ in the statement of Lemma~\ref{prop:CO}.

\subsection{Estimation error}
\label{sec:estimationerror}

We now upper bound the estimation error $\mathcal{R}_{f,g}(\hat{\Phi}_{d}) - \mathcal{R}_{f,g}(\Phi_d^{\star})$ of our classifier $\hat\Phi_d$. To that end, we first reinterpret $\eta_d$ and $\Phi_d^{\star}$; this will be useful to rewrite the estimation error as an excess risk (as in~\eqref{eq:excess_risk}) in the truncated space. The next remark follows from direct calculations.

\begin{rem}
\label{rmk:truncatedSpace}
Denote by $\bfX_d := \bigl(\langle\varphi_j,X\rangle\bigr)_{1 \leq j \leq d}$, $\bftheta_d = (\theta_j)_{1 \leq j \leq d}$, and  $\bfmu_d = (\mu_j)_{1 \leq j \leq d}$ the versions of $X$, $\theta$, and $\mu$ in the truncated space. Then,
\[
\eta_d(X) = \frac{\frac{1}{2} q_{f_d}(X)}{\frac{1}{2} q_{f_d}(X) + \frac{1}{2} q_{g_d}(X)} \qquad \textrm{and} \qquad q_{f_d}(X) = e^{\frac{1}{2} \|\bfX_d\|^2} e^{-\frac{1}{2}\|\bfX_d-\bftheta_d\|^2} \;.
\]
Since the conditional distribution of $\bfX_d$ is $\cN(\bftheta_d,I_d)$ given $Y=1$ and $\cN(\bfmu_d,I_d)$ given $Y=0$, this entails that $\eta_d(X) = \PR(Y=1|\bfX_d)$ almost surely.

In other words, $\eta_d$ is the regression function of the restricted classification problem where the learner has only access to the projected trajectory $\bfX_d \in \R^d$, instead of the whole trajectory $X$. The function $\Phi^*_d = \mathds{1}_{\{\eta_d \geq 1/2\}}$ is the associated Bayes classifier.
\end{rem}

\ \\
We are now ready to compare the risk of our classifier $\hat\Phi_d$ to that of the $d$-dimensional oracle $\Phi_d^\star$. The proof of the next lemma is postponed to Appendix~\ref{s:pproj}. (The value of $4608$ could most probably be improved.) We recall that $f_d=\Pi_d(f)$ and $g_d=\Pi_d(g)$.

\begin{lem}
\label{lem:proj}
We consider Model~(\ref{eq:model}). Let $d\in \mathbb{N}^*$ and set $\Delta_d := \| f_d - g_d \|$. Let $0 < \epsilon \leq 1/8$ and $n \geq 27$ such that
\begin{equation}
\left(\Delta_d + 2 \sqrt{\frac{d\log(n)}{n}} \, \right) \sqrt{\frac{d\log(n)}{n}} \leq \frac{\epsilon}{48} \;.
\label{eq:lemma2conditioneps}
\end{equation}
Then, the classifiers $\hat \Phi_d$ and $\Phi_d^\star$ defined by~\eqref{eq:classif} and~\eqref{eq:bayes_rule_d} satisfy
\[
 \mathcal{R}_{f,g}(\hat\Phi_d) - \mathcal{R}_{f,g}(\Phi_d^\star) \leq 2 \epsilon \left( 1\wedge \frac{10 \epsilon}{\Delta_d}  \right) + 6 \exp \left(  - \frac{n \epsilon^2}{4608 \, d \log n } \right) + \frac{13}{n} \;.
\]
\end{lem}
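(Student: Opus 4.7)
The plan is to view $\mathcal{R}_{f,g}(\hat\Phi_d) - \mathcal{R}_{f,g}(\Phi_d^\star)$ as an excess risk inside the $d$-dimensional truncated classification problem, then combine a margin argument \`a la Proposition~\ref{thm:margin} with a concentration analysis of the estimators $\hat\theta$ and $\hat\mu$.

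By Remark~\ref{rmk:truncatedSpace}, both $\hat\Phi_d$ and $\Phi_d^\star$ depend on $X$ only through $\bfX_d$, and $\Phi_d^\star$ is the Bayes classifier associated with $\eta_d$. Hence formula~\eqref{eq:excess_risk} applies with $\eta_d$ in place of $\eta$. Introducing the log-odds
\[
T_d(X) := \langle \bftheta_d - \bfmu_d, \bfX_d\rangle_d - \tfrac{1}{2}\|\bftheta_d\|_d^2 + \tfrac{1}{2}\|\bfmu_d\|_d^2, \qquad \hat T_d(X) := \langle \hat\theta - \hat\mu, \bfX_d\rangle_d - \tfrac{1}{2}\|\hat\theta\|_d^2 + \tfrac{1}{2}\|\hat\mu\|_d^2,
\]
one has $\Phi_d^\star = \mathds{1}_{\{T_d \geq 0\}}$ and $\hat\Phi_d = \mathds{1}_{\{\hat T_d \geq 0\}}$. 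I would then split the expectation $\mathbb{E}\bigl[|2\eta_d(X)-1|\mathds{1}_{\hat\Phi_d(X)\ne\Phi_d^\star(X)}\bigr]$ according to the margin event $\{|\eta_d(X) - 1/2| \leq \epsilon\}$. On this event $|2\eta_d-1| \leq 2\epsilon$, and since the truncated model is itself a Gaussian mixture on $\R^d$ with centers $\bftheta_d,\bfmu_d$ and gap $\Delta_d = \|f_d-g_d\|$, the proof of Proposition~\ref{thm:margin} carries over verbatim to yield $\PR_X(|\eta_d - 1/2| \leq \epsilon) \leq 1 \wedge 10\epsilon/\Delta_d$. This produces the first term $2\epsilon(1\wedge 10\epsilon/\Delta_d)$.

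On the complementary event $\{|\eta_d - 1/2| > \epsilon\}$, the logistic link gives $|T_d(X)| \geq \log((1/2+\epsilon)/(1/2-\epsilon)) \geq 4\epsilon$ for $\epsilon \leq 1/8$, so any disagreement $\hat\Phi_d(X) \neq \Phi_d^\star(X)$ forces $|\hat T_d(X) - T_d(X)| > 4\epsilon$. It therefore suffices to bound $\PR(|\hat T_d - T_d| > 4\epsilon)$. Setting $\delta_\theta := \hat\theta - \bftheta_d$ and $\delta_\mu := \hat\mu - \bfmu_d$, a direct rearrangement gives the useful decomposition
\[
\hat T_d(X) - T_d(X) = \langle \delta_\theta, \bfX_d - \bftheta_d\rangle_d - \langle \delta_\mu, \bfX_d - \bfmu_d\rangle_d - \tfrac{1}{2}\|\delta_\theta\|_d^2 + \tfrac{1}{2}\|\delta_\mu\|_d^2,
\]
which, conditionally on $Y$ and the training sample, is an affine function of a standard Gaussian vector in $\R^d$ and hence Gaussian. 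I would introduce the sample good event $\cA$ on which $N_0 \wedge N_1 \geq n/4$ and $\max(\|\delta_\theta\|_d, \|\delta_\mu\|_d) \leq \sqrt{d\log(n)/n}$: Hoeffding's inequality for the binomial $(N_0,N_1)$ together with the Laurent--Massart tail bound on the conditionally $\chi^2_d$-distributed quantities $N_k\|\delta_\cdot\|_d^2$ (via Remark~\ref{rmk:conditionallyGaussian}) give $\PR(\cA^c) \leq 13/n$ for $n \geq 27$, which is the last term of the stated inequality.

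It remains to control $\PR(|\hat T_d - T_d| > 4\epsilon \mid \cA)$. On $\cA$, conditioning further on $Y$, the conditional mean of $\hat T_d(X) - T_d(X)$ is bounded by $\Delta_d\max(\|\delta_\theta\|_d,\|\delta_\mu\|_d) + \tfrac{1}{2}(\|\delta_\theta\|_d^2 + \|\delta_\mu\|_d^2) \leq \epsilon/48$ exactly thanks to hypothesis~\eqref{eq:lemma2conditioneps}, and its conditional variance is at most $\|\delta_\theta - \delta_\mu\|_d^2 \leq 4\,d\log(n)/n$. A standard Gaussian tail bound on $\hat T_d(X) - T_d(X) - \mathbb{E}[\hat T_d(X) - T_d(X)\mid Y,\mathcal{S}_n]$ then yields a contribution of the form $6\exp(-n\epsilon^2/(4608\,d\log n))$, the prefactor $6$ absorbing the two-sided tail and the union bound over $Y\in\{0,1\}$. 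The main technical obstacle lies in simultaneously tracking three sources of randomness---the random subsample sizes $N_0,N_1$, the Gaussian noise hidden inside $\hat\theta,\hat\mu$, and the Gaussian mixture distribution of $\bfX_d$ given $Y$---while preserving the conditional Gaussian structure of Remark~\ref{rmk:conditionallyGaussian} needed both for the $\chi^2$ tail bounds and for the Gaussian concentration of $\hat T_d - T_d$; the truncated margin inequality itself is essentially a free byproduct of the proof of Proposition~\ref{thm:margin}.
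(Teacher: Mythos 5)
Your proposal takes a genuinely cleaner route than the paper's. The paper controls $\PR\bigl(|\hat\eta_d(X) - \eta_d(X)| > \epsilon\bigr)$ by rewriting the two regression functions as ratios $F_d/(F_d+G_d)$ and $\hat F_d/(\hat F_d + \hat G_d)$ of Gaussian densities, decomposing the difference of ratios into auxiliary events $A_1, A_2$, and eventually reducing each to a bilinear Gaussian concentration plus a $\chi^2$ tail. You bypass all of this by passing directly to the log-odds $T_d$ and $\hat T_d$: the decomposition
\[
\hat T_d(X) - T_d(X) = \langle \delta_\theta, \bfX_d - \bftheta_d\rangle_d - \langle \delta_\mu, \bfX_d - \bfmu_d\rangle_d - \tfrac{1}{2}\|\delta_\theta\|_d^2 + \tfrac{1}{2}\|\delta_\mu\|_d^2
\]
exposes the conditional Gaussianity of the estimation error in one line and makes it transparent why assumption~\eqref{eq:lemma2conditioneps} controls the conditional bias. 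The reduction from $|\eta_d(X) - 1/2| > \epsilon$ to $|T_d(X)| > 4\epsilon$ (since $2\tanh^{-1}(2\epsilon) \geq 4\epsilon$) and hence $|\hat T_d - T_d| > 4\epsilon$ upon disagreement is also sound, and the margin term is obtained the same way as in the paper via the truncated version of Proposition~\ref{thm:margin}.

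There is, however, a concrete numerical gap in your good event $\cA$. You require $\max(\|\delta_\theta\|_d, \|\delta_\mu\|_d) \leq \sqrt{d\log(n)/n}$ and claim $\PR(\cA^c) \leq 13/n$ from Laurent--Massart. Since $N_k\|\hat\theta - \bftheta_d\|_d^2$ (resp. $N_k\|\hat\mu - \bfmu_d\|_d^2$) is conditionally $\chi^2_d$, the bound \eqref{eq:chi2concentration} with $x=\log n$ combined with \eqref{eq:bounddlogn} only gives $\PR(\chi^2_d > 4d\log n) \leq 1/n$; on $\{N_k \geq n/4\}$ this translates to $\PR\bigl(\|\hat\theta - \bftheta_d\|_d^2 > 16\, d\log(n)/n\bigr) \leq 1/n$, not $\PR\bigl(\|\hat\theta - \bftheta_d\|_d^2 > d\log(n)/n\bigr) \leq 1/n$. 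For your tighter threshold you would need $\PR(\chi^2_d > d\log(n)/4) \leq 1/n$, which fails for moderate $n$ — for instance $\PR(\chi^2_1 > \log(1000)/4) \approx 0.19 \gg 13/1000$, and for $n \leq e^4$ the threshold is even below the $\chi^2_d$ mean. Replacing the threshold by $4\sqrt{d\log(n)/n}$ (what the paper implicitly uses, see the inline display just above~\eqref{eq:conditionDeltadEpsilon}) restores $\PR(\cA^c) \leq 4/n \leq 13/n$; the conditional mean is then bounded under~\eqref{eq:lemma2conditioneps} by $\epsilon/4$ rather than $\epsilon/48$, the conditional variance by $64\, d\log(n)/n$, and the Gaussian tail bound $\PR(|\cN(0,v)| > 15\epsilon/4)$ still delivers the exponential term with a much larger constant in the exponent than $1/4608$. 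With this constant fix your argument is complete and in fact gives a sharper exponential term than the lemma claims.
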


In the same vein as for the approximation error, the estimation error bound above strongly depends on the distance between the two functions $f_d$ and $g_d$ of interest. This is again a consequence of the  margin behavior analyzed in Proposition \ref{thm:margin} (Section~\ref{s:margin}).

More precisely, when $\epsilon$ is chosen at least of the order of $\sqrt{d/n} \log(n)$ (in order to kill the exponential term), the estimation error bound above is roughly of the order of $\min\bigl\{\epsilon, \epsilon^2/\Delta_d\bigr\}$. In particular, if $\Delta_d$ is bounded from below, then the estimation error is at most of the order of $\epsilon^2 \approx d \log^2(n)/n$. On the other hand, if no lower bound is available for $\Delta_d$, then the only estimation error bound we get is a slower rate of the order of $\epsilon \approx \sqrt{d/n} \log(n)$.

\subsection{Convergence rate under a smoothness assumption}
\label{subsection:upper_bound}

We now state the main result of this paper. We upper bound the excess risk $\mathcal{R}_{f,g}(\hat\Phi_d) - \mathcal{R}_{f,g}(\Phi^\star)$ of our classifier when $f$ and $g$ belong to subsets of the Sobolev ball $\Hs(R)$ defined in~\eqref{def:Hs}. These subsets are parametrized by a separation distance $\Delta$: a larger value of $\Delta$ makes the classification problem easier, as reflected by the non-increasing bound below.

\begin{thm}
\label{prop:minmax}
There exist an absolute constant $c>0$ and a constant $N_{s,R} \geq 86$ depending only on $s$ and $R$ such that the following holds true. For all $s,R > 0$ and all $n \geq N_{s,R}$, the classifier $\hat \Phi_{d_n}$ defined by~\eqref{eq:classif} with $d_n = \lfloor (R^2 n)^{\frac{1}{2s+1}} \rfloor $ satisfies
\[
\sup_{\substack{f,g \in \Hs(R) \\ \|f-g\| \geq \Delta}} \left\{ \mathcal{R}_{f,g}(\hat\Phi_{d_n}) - \inf_{\Phi} \mathcal{R}_{f,g}(\Phi)  \right\}  \leq \left\{\begin{array}{ll}
	c \, R^{\frac{1}{2s+1}} n^{-\frac{s}{2s+1}} \log(n) & \textrm{if } \Delta < R^{\frac{1}{2s+1}} n^{-\frac{s}{2s+1}} \log(n) \\[0.2em]
	\displaystyle \frac{c}{\Delta} R^{\frac{2}{2s+1}} n^{-\frac{2s}{2s+1}} \log^2(n) & \textrm{if } \Delta \geq R^{\frac{1}{2s+1}} n^{-\frac{s}{2s+1}} \log(n)
\end{array}\right.
\]
\end{thm}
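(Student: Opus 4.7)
My plan is to apply the decomposition
\[
\mathcal{R}_{f,g}(\hat\Phi_{d_n}) - \mathcal{R}_{f,g}(\Phi^\star) = \bigl[\mathcal{R}_{f,g}(\hat\Phi_{d_n}) - \mathcal{R}_{f,g}(\Phi_{d_n}^\star)\bigr] + \bigl[\mathcal{R}_{f,g}(\Phi_{d_n}^\star) - \mathcal{R}_{f,g}(\Phi^\star)\bigr]
\]
stated in the ``Proof strategy'' paragraph of Section~\ref{sec:defclassifier}, feeding the two terms into Lemma~\ref{lem:proj} and Lemma~\ref{prop:CO} respectively. Writing $\delta_n \eqdef R^{1/(2s+1)} n^{-s/(2s+1)}$, the choice $d_n = \lfloor (R^2 n)^{1/(2s+1)} \rfloor$ balances the Sobolev tail $R d_n^{-s}$ and the stochastic scale $\sqrt{d_n/n}$, both of order $\delta_n$. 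I will then apply both lemmas with a common value $\epsilon_n = C \delta_n \log(n)$, where $C$ is an absolute constant chosen large enough.

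For the approximation error, the Sobolev assumption gives $\|f-f_{d_n}\|^2,\|g-g_{d_n}\|^2 \leq R^2 d_n^{-2s} \lesssim \delta_n^2$, and $\epsilon_n^2/\log(1/\epsilon_n^2)$ is of order $\delta_n^2 \log(n)$, so condition~\eqref{eq:lemma1conditioneps} holds for $n \geq N_{s,R}$. Lemma~\ref{prop:CO} then yields
\[
\mathcal{R}_{f,g}(\Phi_{d_n}^\star) - \mathcal{R}_{f,g}(\Phi^\star) \leq 12\epsilon_n^2 + 2\epsilon_n\bigl(1 \wedge 10\epsilon_n/\Delta\bigr).
\]
For the estimation error, using the crude bound $\Delta_{d_n} \leq 2R$ (since $\|f\|,\|g\| \leq R$ on $\Hs(R)$ for $s \geq 0$) and $\sqrt{d_n \log(n)/n}$ of order $\delta_n \sqrt{\log n}$, condition~\eqref{eq:lemma2conditioneps} reduces to $R \delta_n \sqrt{\log n} \lesssim \epsilon_n/48$, which holds for $n \geq N_{s,R}$. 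Taking $C$ large enough, the exponential $6\exp(-n\epsilon_n^2/(4608 d_n \log n))$ is bounded by $1/n$, and Lemma~\ref{lem:proj} gives
\[
\mathcal{R}_{f,g}(\hat\Phi_{d_n}) - \mathcal{R}_{f,g}(\Phi_{d_n}^\star) \leq 2\epsilon_n\bigl(1 \wedge 10\epsilon_n/\Delta_{d_n}\bigr) + 19/n.
\]

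The conclusion follows from a case analysis on $\Delta$. If $\Delta < \delta_n \log(n)$, then the two margin factors are bounded by $1$, and the total excess risk is at most $14 \epsilon_n + 19/n$, which is of order $\delta_n \log n$ for $n \geq N_{s,R}$, matching the first branch. If instead $\Delta \geq \delta_n \log(n)$, the triangle inequality $\Delta_{d_n} \geq \Delta - \|f-f_{d_n}\| - \|g-g_{d_n}\| \geq \Delta - 2\delta_n \geq \Delta/2$ (valid once $\delta_n \log(n) \geq 4 \delta_n$) lets me replace $\Delta_{d_n}$ by $\Delta/2$ in the estimation bound; combining this with $12\epsilon_n^2 \leq (24R)\epsilon_n^2/\Delta$ (from $\Delta \leq 2R$) and $19/n \lesssim \epsilon_n^2/\Delta$ yields a total of order $\epsilon_n^2/\Delta = R^{2/(2s+1)} n^{-2s/(2s+1)} \log^2(n)/\Delta$, matching the second branch.

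The main technical obstacle is simply tracking how the $R,s$-dependent constants are absorbed into $N_{s,R}$ while keeping the absolute constant $c$ of the theorem uniform; everything else reduces to substitution. Some care is also needed at the transition $\Delta \approx \delta_n \log(n)$ to ensure $\Delta_{d_n}$ does not collapse below $\Delta/2$, but this is handled by the triangle inequality above.
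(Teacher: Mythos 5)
Your decomposition, choice of $d_n$, use of the two lemmas, and case split on $\Delta$ are all the same as the paper's; the approach is sound in outline. But the unification of the two tuning parameters into a single $\epsilon_n = C\,\delta_n\log(n)$ creates a genuine gap in the second case.

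In case~2 you produce the term $12\epsilon_n^2 = 12C^2\delta_n^2\log^2(n) = 12C^2 R^{2/(2s+1)} n^{-2s/(2s+1)}\log^2(n)$ from Lemma~\ref{prop:CO}, and you bound it by $24R\,\epsilon_n^2/\Delta$ using $\Delta\leq 2R$. This injects an explicit factor $R$ into the final constant, so the constant multiplying $R^{2/(2s+1)}n^{-2s/(2s+1)}\log^2(n)/\Delta$ is of order $R C^2$, not absolute. Since $\Delta$ ranges freely up to $2R$ and $R$ is arbitrary, this factor cannot be hidden inside $N_{s,R}$ (which only restricts $n$, not the ratio between $12\epsilon_n^2$ and $\epsilon_n^2/\Delta$). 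Trying to cure it by shrinking $\epsilon_n$ to $C\delta_n\sqrt{\log n}$ (so that $\epsilon_n^2 \approx \delta_n^2\log n$ can be absorbed via $\log n\geq\Delta$ for $n\geq e^{2R}$) breaks the estimation side: the exponential $6\exp\bigl(-n\epsilon_n^2/(4608 d_n\log n)\bigr)$ then reduces to a constant instead of $O(1/n)$.

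The paper resolves this tension by taking two different scales: $\epsilon_1 \approx \delta_n\log n$ for Lemma~\ref{lem:proj} (a full $\log n$ is needed to kill the exponential), and the strictly smaller $\epsilon_2 \approx \delta_n\sqrt{\log n}$ for Lemma~\ref{prop:CO} (here only condition~\eqref{eq:lemma1conditioneps}, which has a $\sqrt{\log}$ scaling, must hold). Then $12\epsilon_2^2 \approx \delta_n^2\log n$, and for $n \geq e^{2R}$ one has $\log n \geq \Delta$, hence $12\epsilon_2^2 \leq 12\epsilon_2^2\cdot\log(n)/\Delta \lesssim \delta_n^2\log^2(n)/\Delta$ with an absolute constant and with the $R$-dependence pushed entirely into $N_{s,R}$. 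To repair your proof, keep your $\epsilon_1 = C\delta_n\log n$ for the estimation error but introduce a separate, smaller $\epsilon_2 \approx \delta_n\sqrt{\log n}$ for the approximation error, exactly as in the paper.

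(Two minor slips, both harmless: the triangle-inequality step should give $\Delta_{d_n}\geq\Delta-\|f-f_{d_n}\|-\|g-g_{d_n}\|\geq\Delta-4\delta_n$, not $\Delta-2\delta_n$, since $\|f-f_{d_n}\|\leq R d_n^{-s}\leq 2\delta_n$ by~\eqref{eq:biasvariancelink}; and to turn the exponential into $\leq 1/n$ you need $C^2/4608\geq 1$, which constrains the absolute $C$.)
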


\ \\
The proof is postponed to Appendix~\ref{s:pminmax} and combines Lemmas~\ref{prop:CO} and~\ref{lem:proj} from the previous sections. A proof sketch is also provided below.

Note that the two bounds of the right-hand side coincide when $\Delta = R^{1/(2s+1)} \, n^{-s/(2s+1)} \log(n)$. Therefore, there is a continuous transition from a slow rate (when $\Delta$ is small) to a fast rate (when $\Delta$ is large). This leads to the following remark.

\begin{rem}[Novelty of the bound]
\ \\[-0.4cm]
\begin{itemize}
	\item Taking $\Delta=0$, we recover the worst-case bound of \cite[Corollary~4.4]{Cadre} (where $u=1/s$) up to logarithmic factors. As shown by Theorem~\ref{thm:minimaxlowerbound} below, this slow rate is unimprovable for a small distance~$\| f- g \|$.
\item However, we obtain a significantly faster rate when $\| f- g \|$ is bounded from below. This improved rate is a consequence of the margin behavior (see, \eg, \cite{AT07,GKM16}), but not of the choice of $d_n$ that is oblivious to $\| f- g \|$.
	\item Continuous transitions from slow rates to faster rates were already derived in the past. For instance, for any supervised classification problem where the margin $\big|2\eta(X)-1\big| \geq h$ is almost surely bounded from below, \cite[Corollary~3]{MaNe-06-RiskBounds} showed that the excess risk w.r.t.~a class of VC-dimension $V$ varies continuously from $\sqrt{V/n}$ to $V/(nh)$ (omitting log factors) as a function of the margin parameter~$h$. In a completely different setting,  \cite[Theorem~5]{RaSrTs-13-MinimaxRegretRisk} analyzed the minimax excess risk for nonparametric regression with well-specified and misspecified models. They showed a continuous transition from slow to faster rates when the distance of the regression function to the statistical model decreases to zero.
\end{itemize}
\end{rem}

We finally note that, though the choice of the parameter $d_n$ does not depend on $\Delta$, it still depends on the (possibly unknown) smoothness parameter $s$. Though designing an adaptive classifier is beyond the scope of this paper, it might be addressed via the Lepski method (see, \eg, \cite{L90}) after adapting it to the classification setting.

\ \\
\textit{Sketch of the proof.} We outline the main ingredients. For all $d \in \N^*$ and $0 < \epsilon_1,\epsilon_2 \leq 1/8$ satisfying the assumptions of Lemmas~\ref{lem:proj} and~\ref{prop:CO},
\begin{eqnarray}
\mathcal{R}_{f,g}(\hat\Phi_d) - \mathcal{R}_{f,g}(\Phi^\star) 
& = & \mathcal{R}_{f,g}(\hat\Phi_d) - \mathcal{R}_{f,g}(\Phi_d^\star) + \mathcal{R}_{f,g}(\Phi_d^\star) - \mathcal{R}_{f,g}(\Phi^\star) \nonumber \\
& \lesssim &\epsilon_1 \left( 1\wedge \frac{\epsilon_1}{\| f_d -g_d \|}  \right) + \exp \left(  - \frac{n \epsilon_1^2}{4608 \, d \log n } \right)  + \frac{1}{n} + \epsilon_2^2 + 
\epsilon_2  \left( 1\wedge \frac{\epsilon_2}{\| f -g \|}  \right) \nonumber \\
& \lesssim & \min\left\{\epsilon_1+\epsilon_2, \frac{\epsilon_1^2 + \epsilon_2^2}{\| f-g \|} \right\} + \exp \left(  - \frac{n \epsilon_1^2}{4608 \, d \log n } \right) + \frac{1}{n} + \epsilon_2^2 \nonumber \\
& \lesssim & \min\left\{\sqrt{\frac{d}{n}} \log(n)+\epsilon_2, \frac{d \log^2(n)/n + \epsilon_2^2}{\| f-g \|} \right\} + \frac{1}{n} + \epsilon_2^2 \nonumber 
\end{eqnarray}
for $d$ large enough, and for the choice of $\epsilon_1 \approx \sqrt{d/n} \log(n)$. Following the comments at the end of Section~\ref{sec:trunc}, we also choose $\epsilon_2 \approx R d^{-s}$ (up to logarithmic factors). We obtain, omitting constant but also logarithmic factors:
\begin{align*}
\mathcal{R}_{f,g}(\hat\Phi_d) - \mathcal{R}_{f,g}(\Phi^\star)  & \lesssim \min\left\{\sqrt{\frac{d}{n}} + d^{-s} , \frac{d/n + d^{-2s}}{\| f-g \|} \right\} \lesssim \min\left\{n^{-\frac{s}{2s+1}} , \frac{n^{-\frac{2s}{2s+1}}}{\| f-g \|} \right\} \;,
\end{align*}
where the last inequality is obtained with the optimal value of $d \approx n^{\frac{1}{2s+1}}$. A detailed and more formal proof (with, \eg, a rigorous treatment of the approximation $\| f_d -g_d \| \approx \| f -g \|$) can be found in Appendix~\ref{s:pminmax}. \hfill $\square$


\section{Lower bounds on the excess risk}
\label{s:lower}
\label{sec:minimaxlowerbounds}

In this section we derive two types of excess risk lower bounds.

The first one decays polynomially with $n$ and applies to \textit{any} classifier. This minimax lower bound indicates that, up to logarithmic factors, the excess risk of Theorem~\ref{prop:minmax} cannot be improved in the worst case. This result is derived via standard nonparametric statistical tools (\eg, Fano's inequality) and is stated in Section~\ref{s:minimaxfano}.

Our second lower bound is of a different nature: it decays logarithmically with $n$ and only applies to the nonparametric $k$-nearest neighbors algorithm evaluated on projected trajectories $\bfX_{i,d} \in \R^d$ and $\bfX_d \in \R^d$. We allow $d$ to be chosen adaptively via a sample-splitting strategy, and we consider $k$ tuned (optimally) as a function of $d$. Our logarithmic lower bound indicates that this popular algorithm is not fit for our particular model; see Section~\ref{s:knn} below.  

\subsection{A general minimax lower bound}\label{s:minimaxfano}

We provide a lower bound showing that the excess risk bound of Theorem~\ref{prop:minmax} is minimax optimal up to logarithmic factors. The proof is postponed to Appendix~\ref{s:minimaxlowerbound}.

\begin{thm}
\label{thm:minimaxlowerbound}
Consider the statistical model~\eqref{eq:model} and the set $\cH_s(R)$ defined in~\eqref{def:Hs}, where $s,R>0$ and where $(\varphi_j)_{j \leq 1}$ is any Hilbert basis of $\LL([0,1])$. Then, every classifier $\hat{\Phi}$ satisfies, for any number $n \geq \max\bigl\{R^{1/s}, (32 \log(2) + 2)^{2s+1}/(3R^2/4)\bigr\}$ of i.i.d.~observations $(X_i,Y_i)_{1 \leq i \leq n}$ from~\eqref{eq:model} and all $\Delta \in (0,R/2]$,
\[
\sup_{\substack{f,g \in \Hs(R) \\ \|f-g\| \geq \Delta}} \left\{ \mathcal{R}_{f,g}(\hat\Phi) - \inf_{\Phi} \mathcal{R}_{f,g}(\Phi)  \right\} \geq \left\{ \begin{array}{ll}
\displaystyle c e^{-2 R^{2/(2s+1)}} \, R^{1/(2s+1)} \, n^{-s/(2s+1)} & \textrm{if $\Delta < R^{1/(2s+1)} \, n^{-s/(2s+1)}$} \\[0.5cm]
\displaystyle \frac{c e^{-2 \Delta^2}}{\Delta} \, R^{2/(2s+1)} \, n^{-2s/(2s+1)}  & \textrm{if $\Delta \geq R^{1/(2s+1)}\, n^{-s/(2s+1)}$}
\end{array}\right.
\]
for some absolute constant $c>0$.
\end{thm}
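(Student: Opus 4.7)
The plan is to apply the classical reduction from classification to multiple hypothesis testing, combined with Fano's inequality, to obtain matching lower bounds in the two regimes simultaneously.

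\emph{Construction of hypotheses.} I would build a finite family of pairs $\{(f_\sigma, g_\sigma)\}_{\sigma \in \mathcal{C}} \subset \Hs(R)^2$ satisfying $\|f_\sigma - g_\sigma\| \geq \Delta$ as follows. Fix a dimension $d \asymp (R^2 n)^{1/(2s+1)}$ and a block of high-frequency coordinates; then perturb $f$ only, keeping $g_\sigma = g_0$ fixed, by
\[
f_\sigma = f_0 + \epsilon \sum_{j=d+1}^{2d} \sigma_j \varphi_j, \qquad \sigma \in \{-1,+1\}^d,
\]
where the baseline $f_0, g_0$ is chosen with $\|f_0 - g_0\| = \Delta$ (e.g.\ $f_0 = \Delta \varphi_1$ and $g_0 \equiv 0$) and the amplitude $\epsilon \asymp R d^{-s-1/2}$ is the largest value compatible with the Sobolev constraint $f_\sigma \in \Hs(R)$. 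By the Varshamov-Gilbert lemma one extracts a sub-family $\mathcal{C} \subset \{-1,+1\}^d$ of log-cardinality $\log|\mathcal{C}| \asymp d$ with pairwise Hamming distance at least $d/8$, so that $\|f_\sigma - f_{\sigma'}\|^2 \asymp d\epsilon^2$ for distinct $\sigma,\sigma' \in \mathcal{C}$.

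\emph{KL control.} The Girsanov formulas~\eqref{eq:qf}--\eqref{eq:qg} give $\KL(\PR_f\|\PR_{f'}) = \tfrac12\|f-f'\|^2$, hence
\[
\KL\bigl(\PR_{f_\sigma,g_0}^{\otimes n} \,\|\, \PR_{f_{\sigma'},g_0}^{\otimes n}\bigr) = \frac{n}{4}\|f_\sigma - f_{\sigma'}\|^2 \lesssim n d \epsilon^2 \asymp d,
\]
which matches $\log|\mathcal{C}|$ up to an absolute constant---exactly the critical threshold for Fano.

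\emph{Excess risk separation.} The core technical step is a uniform lower bound
\[
\mathcal{R}_{f_\sigma, g_0}(\Phi_{\sigma'}^\star) - \mathcal{R}_{f_\sigma, g_0}(\Phi_\sigma^\star) = \E_{f_\sigma,g_0}\Bigl[|2\eta_\sigma(X) - 1|\,\mathds{1}_{\Phi^\star_\sigma(X)\neq \Phi^\star_{\sigma'}(X)}\Bigr] \gtrsim \rho(\sigma,\sigma')
\]
for distinct $\sigma,\sigma' \in \mathcal{C}$. Because the Bayes classifiers~\eqref{eq:bayes_rule} are Gaussian half-spaces in $\LL$, the symmetric disagreement $\{\Phi_\sigma^\star \neq \Phi_{\sigma'}^\star\}$ is a thin slab whose $\PR_X$-mass can be computed explicitly from the joint Gaussian structure of $\langle \varphi_j, X\rangle$. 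In the slow regime ($\Delta$ small), $|2\eta_\sigma - 1|$ has order-one value on a non-negligible portion of this slab, giving $\rho \asymp \sqrt{d}\,\epsilon \asymp R^{1/(2s+1)} n^{-s/(2s+1)}$. In the fast regime ($\Delta$ large), Proposition~\ref{thm:margin} supplies an additional factor $\asymp \epsilon/\Delta$ on the weight, yielding $\rho \asymp d\epsilon^2/\Delta \asymp R^{2/(2s+1)} n^{-2s/(2s+1)}/\Delta$. The Gaussian factors $e^{-2\Delta^2}$ and $e^{-2 R^{2/(2s+1)}}$ in the final statement arise as tail corrections coming from the mixture density of $\langle \varphi_1,X\rangle$ restricted to the set where the margin analysis is sharp.

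\emph{Conclusion and main obstacle.} Combining the separation $\rho$ and the KL bound via the standard Fano inequality gives, for every classifier $\hat\Phi$,
\[
\sup_{\sigma \in \mathcal{C}} \E\Bigl[\mathcal{R}_{f_\sigma,g_0}(\hat\Phi) - \mathcal{R}_{f_\sigma,g_0}(\Phi_\sigma^\star)\Bigr] \gtrsim \rho \left(1 - \frac{\max \KL + \log 2}{\log |\mathcal{C}|}\right) \gtrsim \rho,
\]
which is precisely the announced lower bound in each regime. The hard part is the quantitative excess-risk separation in the fast regime: one must track the Gaussian geometry of the disagreement slab and invoke Proposition~\ref{thm:margin} tightly enough to extract the $1/\Delta$ acceleration without losing dimensional factors or picking up spurious logarithms. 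A secondary but non-trivial point is verifying that the perturbed pairs $(f_\sigma, g_0)$ remain in $\Hs(R)$ with the optimal exponents in $s$ and $R$, and that $\|f_\sigma - g_0\| \geq \Delta$ holds uniformly over $\sigma$; both follow from the orthogonality of the perturbation directions to the baseline $f_0 - g_0$.
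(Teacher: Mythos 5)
Your high-level strategy is essentially the same as the paper's: reduce to a multiple-hypothesis testing problem over a Varshamov--Gilbert family of single-spike-plus-perturbation pairs $(f_\sigma, g_0)$ with $g_0 = 0$, control the KL divergence via Girsanov, and invoke Fano. The paper also uses the margin bound (Proposition~\ref{thm:margin}) and plays the two regimes against each other by choosing a threshold $\delta$.

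However, you have left a genuine gap precisely where you flag the "hard part", and your description of that step is also geometrically inaccurate. The disagreement region $\{\Phi^\star_\sigma \neq \Phi^\star_{\sigma'}\}$ for two Bayes rules with $\theta_1 = \Delta$ held fixed and small orthogonal perturbations is \emph{not} a thin slab: both classifiers are half-spaces whose bounding hyperplanes pass through the respective midpoints $\theta/2, \theta'/2$, and their symmetric difference (after projecting onto the plane spanned by $\theta$ and $\theta'$) is a \emph{double cone} with apex $z$ at distance roughly $\Delta/\sqrt{2}$ from the origin, not a slab. Quantifying its $Q_0$-mass is exactly where the difficulty lies. The paper does this in three pieces (Lemmas~\ref{lem:BayesClassifierFinite}--\ref{lem:GaussianMassCone}): identify $\Phi_\theta$ with the nearest-centroid rule in $\R^d$, show the internal angle $\angle(\theta,\theta')$ between the two perturbed vectors is at least $\sqrt{d-1}\,\epsilon/(2\Delta)$ (this is where the $1/\Delta$ in the fast-rate bound actually originates, not from Proposition~\ref{thm:margin} as you suggest), and lower bound the Gaussian measure of a double cone of angle $\mathcal{A}$ with apex $z$ by $(\mathcal{A}/2\pi)\, e^{-\|z\|^2}$. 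Without an argument of this type, "$\rho \asymp \sqrt d\,\epsilon$ (slow regime) or $\rho \asymp d\epsilon^2/\Delta$ (fast regime)" is an assertion, not a derivation; and the claim that the $e^{-2\Delta^2}$ factor "arises as tail corrections coming from the mixture density of $\langle\varphi_1,X\rangle$ restricted to the set where the margin analysis is sharp" does not correspond to anything in the actual computation (one factor of $e^{-\Delta^2}$ comes from the cone apex $\|z\|^2 \leq \Delta^2$, the other from balancing $\delta$ against the classifier separation). So the proposal is a correct proof \emph{plan}, with the right scaffolding, but it does not supply the decisive geometric lemma on which the whole bound rests, and its informal account of where the $1/\Delta$ and $e^{-2\Delta^2}$ factors come from is wrong.
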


We note two minor differences between the upper and lower bounds: Theorem~\ref{prop:minmax} involves extra logarithmic factors, while Theorem~\ref{thm:minimaxlowerbound} involves an extra term of $e^{-2\Delta^2}$. Fortunately both terms have a minor influence (note that $e^{-2\Delta^2} \geq e^{-8R^2}$ since $f,g \in \Hs(R)$). We leave the question of identifying the exact rate for future work. \footnote{Possible solutions include: slightly improving Proposition~\ref{thm:margin} via a tighter Gaussian concentration bound (to gain a factor of nearly $e^{-\Delta^2/8}$), and optimizing the constant appearing in the exponential term of Lemma~\ref{lem:GaussianMassCone} (Appendix~\ref{lem:GaussianMassCone}).}

\ \\
If we omit logarithmic factors and constant factors depending only on $s$ and $R$, Theorems~\ref{prop:minmax} and~\ref{thm:minimaxlowerbound} together imply that, for $n \geq N_{s,R}$ large enough:
\begin{itemize}
	\item when $\Delta \lesssim R^{1/(2s+1)} \, n^{-s/(2s+1)}$, the optimal worst-case excess risk is of the order of $n^{-s/(2s+1)}$;
	\item when $\Delta \gtrsim R^{1/(2s+1)} \, n^{-s/(2s+1)}$, the optimal worst-case excess risk is of the order of $n^{-2s/(2s+1)}/\Delta$.
\end{itemize}

\subsection{Lower bound for the $k$-NN classifier}
\label{s:knn}

In this section, we focus on the $k$-nearest neighbor (kNN) classifier. This classification rule has been intensively studied over the past fifty years. In particular, this method provides interesting theoretical and practical properties. It is quite easy to handle and implement. Indeed, given a sample $\mathcal{S} = \lbrace (X_1,Y_1),\dots, (X_n,Y_n) \rbrace$, a number of neighbors $k$, a norm $\|.\|$ and a new incoming observation, the kNN classifier is defined as 
\begin{equation}
\Phi_{n,k}(X) = \mathds{1}_{\lbrace \frac{1}{k} \sum_{j=1}^k Y_{(j)}(X)>1/2  \rbrace},
\label{eq:knn}
\end{equation}
where the $Y_{(j)}$ correspond to the label of the $X_{(j)}$ re-arranged according to the ordering
 $$\| X_{(1)} - X \| \leq ... \leq \| X_{(n)} - X \|.$$ We refer the reader, \eg, to \cite{G78}, \cite{DGL96} or \cite{BD2015} for more details. \\

We are interested below in the performances of the classifier $ \Phi_{n,k}$ in this functionnal setting. For this purpose, we will use the recent contribution of \cite{chaudhuri} that provides a lower bound of the misclassification rate of the kNN classifier in a very general framework. This lower bound is expressed as the measure of an uncertain set around $\eta \simeq 1/2$. We emphasize that we want to understand if a truncation strategy associated to a non parametric supervised classification approach is suitable for this kind of problem.

\subsubsection{Finite-dimensional case}

\paragraph{Smoothness parameter $\alpha$}

We shall consider first a finite $d$-dimensional case for our Gaussian translation model. In that case, Remark~\ref{rmk:truncatedSpace} in Section~\ref{sec:estimationerror} reveals that the truncation approach problem we are studying is, without loss of generality, equivalent to a supervised classification in $\mathbb{R}^d$ where conditionally on the event $\lbrace Y=0\rbrace$ (resp. $\lbrace Y=1 \rbrace$), $X_d$ is a standard Gaussian variable (resp. a Gaussian random variable with mean $m$ and variance $1$). 
If $\gamma_d$ refers to the Gaussian density:
$$
\forall x \in \mathbb{R}^d \qquad \gamma_d(x) := (2\pi)^{-d/2} e^{-\|x\|^2/2},
$$
then in that case, the Bayes classifier in $\mathbb{R}^d$ is:
$$
\Phi_d^\star(x) = \mathds{1}_{\lbrace \eta_d(x) \geq 1/2 \rbrace } \qquad \mathrm{with} \qquad \eta_d(x) = \frac{\gamma_d(x)}{\gamma_d(x)+\gamma_d(x-m)} \quad \forall x\in \mathbb{R}^d,
$$

In the following, to simplify the notations, we will drop the subscript $d$ in all these terms and will write $\gamma, \eta$ instead of $\gamma_d,\eta_d$. Following \cite{chaudhuri}, the rate of convergence of the kNN depends on a smoothness parameter $\alpha$ involved in the next inequality:
\begin{equation}\label{eq:smooth}
\forall x \in \mathbb{R}^d \qquad |\eta(B(x,r)) - \eta(x)| \leq L \mu(B(x,r))^{\beta}
\end{equation}
where $\eta(B(x,r))$ refers to the mean value of $\eta$ on $B(x,r)$ w.r.t. the distribution of the design $X$ given by $\mu=\frac{1}{2} \gamma(.)+\frac{1}{2} \gamma(.-m)$.
Therefore, our first task is to determine the value of $\beta$ in our Gaussian translation model. We begin with a simple proposition that entails that the value of $\beta$ corresponding to our Gaussian translation model in $\mathbb{R}^d$ is $2/d$. The proof of Proposition \ref{prop:gaussian_alpha}  is postponed to Appendix \ref{proof:gaussian_alpha}. 

\begin{prop}
\label{prop:gaussian_alpha} 
Assume that $\|x\| \leq R$ for some $R \in \mathbb{R}^+$. Then an explicit constant $L_R$ exists such that
$$
\forall r \leq \frac{1}{R} \qquad 
|\eta(B(x,r))-\eta(x)| \leq L_R \mu(B(x,r))^{2/d}.
$$
\end{prop}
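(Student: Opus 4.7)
\noindent\textit{Proof plan.} The plan is to exploit the smoothness of both the regression function $\eta$ and the density $\rho$ of $\mu$, and to use the symmetry of Euclidean balls to obtain a second-order cancellation.

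First, I would note that the density $\rho(y) := \frac{1}{2}\gamma(y) + \frac{1}{2}\gamma(y-m)$ and the regression function $\eta(y) = \gamma(y) / (\gamma(y) + \gamma(y-m))$ are both $C^\infty$ on $\mathbb{R}^d$. For $\|x\| \leq R$ and $r \leq 1/R$, every $y \in B(x,r)$ satisfies $\|y\| \leq R + 1/R$; hence on this compact set I can obtain uniform bounds on $\nabla \eta$, $\nabla^2 \eta$, $\nabla \rho$, and $\nabla^2 \rho$, as well as a uniform positive lower bound $\rho(y) \geq c_R > 0$ coming from Gaussian tail estimates (all constants depending only on $R$, $d$, and $\|m\|$).

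Next, I would perform a second-order Taylor expansion of $\eta$ and $\rho$ around $x$: for $y \in B(x,r)$,
\[
\eta(y) = \eta(x) + \nabla \eta(x) \cdot (y-x) + O_R(r^2), \qquad \rho(y) = \rho(x) + \nabla \rho(x) \cdot (y-x) + O_R(r^2),
\]
where the $O_R$ remainders are controlled by the Hessian bounds from the previous step. The key observation is that, by symmetry of $B(x,r)$ around $x$, the linear terms integrate to zero: $\int_{B(x,r)} (y-x)\,dy = 0$. Multiplying the two expansions and integrating over $B(x,r)$ therefore yields
\[
\int_{B(x,r)} \eta(y)\rho(y)\,dy = \eta(x)\rho(x)\,|B(x,r)| + O_R(r^2)\,|B(x,r)|,
\]
and similarly $\int_{B(x,r)} \rho(y)\,dy = \rho(x)\,|B(x,r)| + O_R(r^2)\,|B(x,r)|$. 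Dividing the first by the second and using the lower bound $\rho(x) \geq c_R$ gives $|\eta(B(x,r)) - \eta(x)| \leq C_R r^2$.

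Finally, I would lower bound the mass of the ball by $\mu(B(x,r)) \geq c_R\, |B(x,r)| = c_R V_d r^d$, so that $r^2 \leq (c_R V_d)^{-2/d}\,\mu(B(x,r))^{2/d}$, which combined with the previous step yields the claim with $L_R := C_R / (c_R V_d)^{2/d}$. The main (minor) obstacle is bookkeeping of the $R$-dependent constants in the Hessian bounds and the lower bound on $\rho$; the heart of the proof is the symmetry-based cancellation of the linear term, which upgrades a naive Lipschitz estimate of order $r$ into the desired estimate of order $r^2 \asymp \mu(B(x,r))^{2/d}$.
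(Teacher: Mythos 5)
Your proposal is correct and rests on the same key mechanism as the paper: the Euclidean ball $B(x,r)$ is symmetric around $x$, so the first-order (linear) contribution integrates to zero, upgrading a naive Lipschitz estimate of order $r$ to the required $r^2 \asymp \mu(B(x,r))^{2/d}$. However the machinery you use differs from the paper's. The paper first exploits the exact identity $\eta(y)\,d\mu(y) = \tfrac{1}{2}\gamma(y)\,dy$, which collapses the weighted average to the ratio of Gaussian ball masses $\eta(B(x,r)) = \gamma(B(x,r))/[\gamma(B(x,r))+\gamma(B(x-m,r))]$; it then forms the difference $\eta(B(x,r))-\eta(x)$ in closed form, changes variables so both balls are centered at the origin, and uses the elementary inequality $|e^a-1-a| \leq \tfrac{a^2}{2}e^{|a|}$ to isolate the linear term $\langle m,s\rangle$, which vanishes by symmetry over $B(0,r)$. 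This direct Gaussian algebra lets the paper track the dependence on $d$ and on $(2\pi)^{-d/2}$ precisely, yielding the explicit value $L_R = 60\pi e\, d\, R^2 e^{R^2/d}$ stated in~\eqref{eq:value_LR}. Your route is more generic: a second-order Taylor expansion of both $\eta$ and the mixture density $\rho$, multiplication, and cancellation of the odd terms by symmetry; it would work verbatim for any $C^2$ regression function and any density bounded below on the ball. That conceptual clarity comes at a price: you leave the constant $L_R$ implicit, whereas the paper needs its explicit $d$-dependence later (in the proof of Proposition~\ref{theo:knngauss} one chooses $R=R_d=\tau\sqrt{d}$ precisely so that $L_{R_d}=d$). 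So the proof is sound, but to serve the paper's purposes you would still need to redo the bookkeeping of how the Hessian bounds and the Gaussian normalizing factor scale with $d$.
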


An important point given in the previous proposition is that when we are considering design points $x$ such that $\|x\| \leq R/2$ and $\|m\| \leq R/2$, we then have
$$
\forall r \leq 1 \quad \forall x \in B(0,R/2) \qquad 
\left| \eta(B(x,r)) - \eta(x)\right|\leq  60 \pi e d R^2 e^{R^2/d} \mu(B(x,r))^{2/d},
$$
so that the constant $L_R$ involved in the statement of Proposition \ref{prop:gaussian_alpha} can be chosen as:
\begin{equation}\label{eq:value_LR}
L_R =  60 \pi e d R^2 e^{R^2/d}
\end{equation}

According to inequality (\ref{eq:smooth}) and thanks to Proposition \ref{prop:gaussian_alpha}, the smoothness of the Gaussian translation model is given by:
$$\beta_d=2/d.$$
 Now, we slightly modify the approach of \cite{chaudhuri} 
to obtain a lower bound on the excess risk that involves the margin of the classification problem. As pointed above, in the Gaussian translation model, when the two classes are well separated (meaning that the center of the two classes are separated with a distance independent on $n$), the margin parameter is equal to $1$ (see Theorem \ref{thm:margin}).\\

\paragraph{Optimal calibration of the kNN}

Before giving our first result on the rate of convergence of the kNN classifier, we remind first some important facts regarding the choice of the number of neighbors $k$ for the kNN classifier. The ability of the kNN to produce a universally consistent classification rule highly depends on the choice of the bandwidth parameter $k_n$. In particular, this bandwidth parameter must satisfy $k_n \longrightarrow + \infty$ and $k_n/n \longrightarrow 0$ as $n \longrightarrow + \infty$   to  produce an asymptotically vanishing variance and bias (see, \eg,  \cite{DGL96} for details). However, to obtain an optimal rate of convergence, $k_n$ has to be chosen to produce a nice trade-off between the bias and the variance of the excess risk. It is shown in \cite{chaudhuri} that, when the marginal law of $X$ is compactly supported, the optimal calibration $k^{opt}_n$ is:
\begin{equation}\label{eq:optimal_k}
\frac{1}{\sqrt{k^{opt}_n}(d)} = c \left( \frac{k^{opt}_n(d)}{n}\right)^{\beta_d} \quad \Leftrightarrow \quad k^{opt}_n(d) \sim n^{\frac{4}{4+d}}
\end{equation}
where $c$ refers to any non negative constant and $\beta_d = 2/d$ refers to the smoothness parameter of the model involved in Inequality \eqref{eq:smooth}. On the other hand, when $\beta_d=1/d$, it is shown in \cite{GKM16} that (almost) optimal rates of convergence can be obtained in the non-compact case, choosing for instance
$$ k_n \sim n^{\frac{2}{2+ d + \tau}},$$
for some positive $\tau$.  The following results provides a lower bound on the convergence rate with a number of neighbor $k$ contained in a range of values .

\begin{prop}
\label{theo:knngauss} 
For any $d\in \mathbb{N}$, a constant $C_1$ exists such that
$$
\mathcal{R}_{f,g}(\Phi_{k,n,d})-\mathcal{R}(\Phi_d^{\star}) \geq \frac{C_1}{k_n}
$$
when $k \in  \mathcal{K}_n$ where 
$$ \mathcal{K}_n = \left\lbrace \ell \in \mathbb{N}\ s.t. \ \frac{1}{\sqrt{\ell}} \geq d \left( \frac{\ell}{n}  \right)^{2/d} \quad \text{and} \quad \ell \leq n \right\rbrace.$$
\end{prop}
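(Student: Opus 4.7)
The plan is to apply the $k$-NN lower bound of \cite{chaudhuri} to the finite-dimensional Gaussian translation model and integrate the pointwise error probability against the natural margin to recover the $1/k$ rate. Two inputs are already in hand: Proposition~\ref{prop:gaussian_alpha} provides the smoothness exponent $\beta_d = 2/d$ of $\eta$ with respect to the marginal $\mu = \tfrac12\gamma(\cdot)+\tfrac12\gamma(\cdot-m)$, with explicit constant $L_R$ given by~\eqref{eq:value_LR}; and the argument of Proposition~\ref{thm:margin}, transposed to the finite-dimensional Gaussian mixture (which is itself exactly computable via the linear structure of $\log(\eta/(1-\eta))$), yields the two-sided margin estimate $\mu\bigl(\{|\eta-\tfrac12|\leq\delta\}\bigr)\asymp\delta$ on a neighborhood of the separating hyperplane $\{x:\langle m,x\rangle=\|m\|^2/2\}$.

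First, I would invoke the pointwise lower bound underlying the main theorem of \cite{chaudhuri}: at any $x$ for which the bias $L_R\,\mu(B(x,r_k(x)))^{\beta_d}$ is dominated by the variance scale $1/\sqrt{k}$, the probability that the $k$-NN vote at $x$ disagrees with $\Phi_d^\star(x)$ is at least $c\,\bar\Phi_\cN(|2\eta(x)-1|\sqrt{k})$, where $r_k(x)$ is the random distance from $x$ to its $k$-th nearest neighbor in the sample and $\bar\Phi_\cN$ denotes the standard Gaussian survival function. On a ball $B(0,R)$ containing both $0$ and $m$, the marginal $\mu$ has a density bounded away from $0$ and $\infty$, so standard nearest-neighbor concentration gives $\mu(B(x,r_k(x)))\asymp k/n$ with exponentially high probability; inserting $\beta_d=2/d$ into the bias-vs-variance inequality then reproduces exactly the condition $1/\sqrt{k} \gtrsim d(k/n)^{2/d}$ defining $\mathcal{K}_n$.

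Second, inserting the pointwise lower bound into the excess risk identity~\eqref{eq:excess_risk} and using the two-sided margin estimate above via a co-area style computation yields
\[
\mathcal{R}_{f,g}(\Phi_{n,k}) - \mathcal{R}_{f,g}(\Phi_d^\star) \;\gtrsim\; \int_{0}^{1/8} \delta\,\bar\Phi_\cN(\delta\sqrt{k})\,\dd\delta.
\]
The change of variables $u=\delta\sqrt{k}$ rewrites the right-hand side as $k^{-1}\int_0^{\sqrt{k}/8} u\,\bar\Phi_\cN(u)\,\dd u$, which converges as $k\to\infty$ to a positive absolute constant and thereby yields the announced $C_1/k$ lower bound.

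The main obstacle will be the transfer of the Chaudhuri--Dasgupta framework, originally phrased for compactly supported distributions, to the Gaussian marginal $\mu$. I would handle this by restricting every integral to a ball $B(0,R)$ on which Proposition~\ref{prop:gaussian_alpha} applies with the explicit constant~\eqref{eq:value_LR} and on which $\mu$ has bounded density; the complement contributes nonnegatively and can simply be dropped in a lower bound. A secondary subtlety is the uniform control of $r_k(x)$ over the thin strip $\{|\eta-\tfrac12|\leq c/\sqrt{k}\}$, which follows from standard $k$-NN distance concentration since $k\to\infty$ for any $k\in\mathcal{K}_n$ as $n\to\infty$.
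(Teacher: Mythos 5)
The paper's own proof picks a single threshold level $\epsilon_n = \tfrac{d}{2}(k_n/n)^{2/d}$, invokes the constant-$\kappa$ disagreement lower bound of Chaudhuri--Dasgupta (Lemmas~5, 17, 18 there) on the band $\{|\eta_d - 1/2| < \epsilon_n\}$, and multiplies by the two-sided annulus margin estimate of Proposition~\ref{thm:crown} to get a lower bound of order $\epsilon_n^2$; the membership $k \in \mathcal{K}_n$ is then used to convert $\epsilon_n^2$ into $C_1/k$. You instead call the sharper pointwise version of the Chaudhuri--Dasgupta bound, which gives a disagreement probability $\gtrsim \bar\Phi_\cN\bigl(|2\eta_d(x)-1|\sqrt{k}\bigr)$ at every $x$ where the smoothness bias is dominated by $1/\sqrt{k}$, and you then integrate this Gaussian tail against the two-sided margin density over the whole window $[0,1/8]$. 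The change of variable $u = \delta\sqrt{k}$ produces $k^{-1}\int_0^{\sqrt{k}/8} u\,\bar\Phi_\cN(u)\,\dd u$, which is bounded below by an absolute constant times $k^{-1}$, giving the rate without ever selecting a specific $\epsilon_n$. This is a genuinely different, and in fact cleaner, route: the paper's choice of $\epsilon_n$ and the last comparison $\epsilon_n^2 \gtrsim 1/k$ is delicate (the condition $k \in \mathcal{K}_n$ is the inequality $1/\sqrt{k} \geq d(k/n)^{2/d}$, so $\epsilon_n \lesssim 1/\sqrt{k}$ and extracting $\epsilon_n^2 \gtrsim 1/k$ requires $\epsilon_n$ to be chosen close to the full window width $1/\sqrt{k}$, not $d(k/n)^{2/d}$), whereas your integration argument delivers $1/k$ uniformly over $\mathcal{K}_n$ essentially for free. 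The two approaches share all the other structural ingredients (Proposition~\ref{prop:gaussian_alpha} for $\beta_d = 2/d$, the two-sided margin estimate, restriction to a ball $B(0,R_d)$ of positive $\mu$-mass to control nearest-neighbor radii and Gaussian densities). One small caveat: to make your co-area step rigorous you need a pointwise (density) lower bound for $|\eta_d - 1/2|$ near zero rather than just the annulus measure lower bound; this follows from the explicit one-dimensional Gaussian computation behind Proposition~\ref{thm:crown} restricted to the ball, but it should be stated rather than implicitly assumed.
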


The proof of this result is given in Appendix \ref{proof:knngauss}. 

\begin{rem} Proposition \ref{theo:knngauss} is an important intermediary result to understand the behaviour of kNN with functional data. We briefly comment on this result below.
\begin{itemize}
\item
The set $\mathcal{K}_n$ contains all the integers from $1$ to an integer equivalent to $n^{4/(4+d)} d^{-2d/(4+d)}$. In particular, the ``optimal" standard calibration of $k_n$ given by Equation \eqref{eq:optimal_k} is included in the set $\mathcal{K}_n$ and Proposition \ref{theo:knngauss} applies in particular for such a calibration.
\item
Proposition \ref{theo:knngauss} entails that tuning the kNN classifier in an ``optimal way"  cannot produce faster rates of convergence than $n^{-4/(d+4)}$, even with some additional informations on the considered model (here the Gaussian distribution of the conditional distributions):
$$
\mathcal{R}_{f,g}(\Phi_{k^{opt}_n(d),n,d})-\mathcal{R}(\Phi_d^{\star}) \geq C_1 n^{-\frac{4}{d+4}}.
$$
 These performances have to be compared to those obtained with our procedure that explicitly exploits the additional knowledge of Gaussian conditional distributions (see, \eg, Lemma \ref{lem:proj}). 

\item The last important point is that the lower bound in the statement of Proposition \ref{theo:knngauss} appears to be seriously damaged when $d$ increases. This is a classical feature of the curse of dimensionality. For us, it invalidates any approach that will jointly associate a truncation strategy with a kNN plug-in classifier:  we will be led to choose $d$ large with $n$ to avoid too much loss of information but in the same time this will harms the statistical misclassification.
\end{itemize}
\end{rem}

\subsubsection{Lower bound of the misclassification rate with truncated strategies}

As pointed by Proposition \ref{theo:knngauss}, the global behavior of the kNN classifier heavily depends on the choice of the dimension $d$. In the same time, the size of $d$ is important to obtain a truncated Bayes classifier $\Phi_d^{\star}$ close to the Bayes classifier $\Phi^{\star}$. To assess the performance of kNN, we consider a sample splitting strategy $\mathcal{S}=\mathcal{S}_1 \cup \mathcal{S}_2$ where $(\mathcal{S}_1,\mathcal{S}_2)$ is a partition of $\mathcal{S}$. Then, $\mathcal{S}_1$ is used to choose a dimension $\hat{d}$, then we apply an optimal kNN classifier method based on the samples of $\mathcal{S}_2$ on the truncated spaces with $\Pi_{\hat{d}}$ with $k_n^{opt}(\hat{d})$ chosen as in Equation \eqref{eq:optimal_k}. It is important to note that the sample splitting strategy produces a choice $\hat{d}$ independent on the samples in $\mathcal{S}_2$. 

Theorem \ref{thm:lwr_bound_knn} below shows that any sample splitting strategy, every choice of $\hat{d}$ will lead to bad performances of classification on model \eqref{eq:model}. 
The proof is postponed to Appendix~\ref{proof:lwr_bound_knn}.
\begin{thm}
\label{thm:lwr_bound_knn}
In the functional Gaussian translation model, any kNN classifier $\Phi_{k_n^{opt},n,\hat{d}}$ satisfies
$$
\inf_{\hat{d}\in \mathbb{N}}\sup_{f,g \in \Hs(r)} \mathcal{R}_{f,g}(\Phi_{k_n^{opt}(\hat{d}),n,\hat{d}})-R_n(\Phi^{\star})  \gtrsim \log(n)^{-2s}.
$$
\end{thm}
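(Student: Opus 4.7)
The plan is to use the standard bias--variance decomposition
\[
\mathcal{R}_{f,g}(\Phi_{k_n^{opt}(\hat d),n,\hat d}) - \mathcal{R}_{f,g}(\Phi^\star) = E(\hat d) + A(\hat d),
\]
with $E(\hat d) \eqdef \mathcal{R}_{f,g}(\Phi_{k_n^{opt}(\hat d),n,\hat d}) - \mathcal{R}_{f,g}(\Phi_{\hat d}^\star)$ (the estimation error of kNN against the truncated Bayes classifier) and $A(\hat d) \eqdef \mathcal{R}_{f,g}(\Phi_{\hat d}^\star) - \mathcal{R}_{f,g}(\Phi^\star)$ (the approximation error of truncating to dimension $\hat d$); both are nonnegative even conditionally on $\mathcal{S}_1$, hence on $\hat d$. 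The idea is to exhibit a \emph{single} pair $(f^\star,g^\star)\in \Hs(R)^2$ which is simultaneously bad for every possible value of $\hat d$: we will show $A(\hat d)\gtrsim (\log n)^{-2s}$ whenever $\hat d$ is too small, and $E(\hat d)\gtrsim (\log n)^{-2s}$ (via Proposition~\ref{theo:knngauss}) whenever $\hat d$ is too large, with the threshold placed at the balance point $D_n\asymp (2/s)\log n/\log\log n$ of the two rates $D_n^{-2s}$ and $n^{-4/(4+D_n)}$.

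Concretely, set $\epsilon_n = \sqrt{R^2-1}/(D_n+1)^s$ (assuming $R\geq 1$; otherwise one rescales by a constant, which only affects the absolute constant) and define $f^\star \eqdef 0$ and $g^\star \eqdef \varphi_1 + \epsilon_n \varphi_{D_n+1}$. Then $\|g^\star\|_{\Hs}^2 = 1+\epsilon_n^2(D_n+1)^{2s}=R^2$, so $(f^\star,g^\star)\in \Hs(R)^2$, and the separation $\|f^\star-g^\star\|=\sqrt{1+\epsilon_n^2}\in[1,R]$ is bounded away from $0$, which is crucial for the approximation-error lower bound. For any $\hat d\leq D_n$ the projection kills the $\varphi_{D_n+1}$ coefficient, so $\Pi_{\hat d}(g^\star)=\varphi_1$ and $\Pi_{\hat d}(f^\star)=0$, while $\Phi^\star$ still exploits the $\epsilon_n\varphi_{D_n+1}$ signal. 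Writing $Z_j=\langle\varphi_j,X\rangle$, the event $\{\Phi^\star\ne\Phi_{\hat d}^\star\}$ is the band where $Z_1+\epsilon_n Z_{D_n+1}$ and $Z_1$ fall on opposite sides of their respective Bayes thresholds $-(1+\epsilon_n^2)/2$ and $-1/2$. Both the $Z_1$-width of this band and the margin $|2\eta(X)-1|\asymp |Z_1+\epsilon_n Z_{D_n+1}+(1+\epsilon_n^2)/2|$ scale like $\epsilon_n|Z_{D_n+1}|$ on it, so plugging into~\eqref{eq:excess_risk} and integrating out $Z_{D_n+1}\sim\mathcal{N}(0,1)$ gives $A(\hat d)\asymp \epsilon_n^2 \gtrsim (\log n)^{-2s}$, uniformly in $\hat d\in\{1,\dots,D_n\}$.

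For $\hat d>D_n$ the truncated means $\bftheta_{\hat d},\bfmu_{\hat d}$ still differ by $\sqrt{1+\epsilon_n^2}\approx 1$, so the truncated problem is non-degenerate; moreover $k_n^{opt}(\hat d)\in\mathcal{K}_n$ whenever $\hat d\lesssim n^{1/3}$ (the larger range yields an even more suboptimal kNN and is handled trivially). Proposition~\ref{theo:knngauss} then yields
\[
E(\hat d) \geq \frac{C_1}{k_n^{opt}(\hat d)} = C_1\, n^{-4/(4+\hat d)} \geq C_1\, n^{-4/(4+D_n+1)}\asymp (\log n)^{-2s},
\]
using monotonicity of $d\mapsto n^{-4/(4+d)}$ and the balance defining $D_n$. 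Taking expectation over $\mathcal{S}_1$ and using $A,E\geq 0$,
\[
\mathcal{R}_{f^\star,g^\star}(\Phi_{k_n^{opt}(\hat d),n,\hat d})-\mathcal{R}_{f^\star,g^\star}(\Phi^\star)
\geq \E[A(\hat d)\mathds{1}_{\hat d\leq D_n}]+\E[E(\hat d)\mathds{1}_{\hat d>D_n}]
\geq c(\log n)^{-2s},
\]
since $\PR(\hat d\leq D_n)+\PR(\hat d>D_n)=1$; the sup over $(f,g)\in\Hs(R)^2$ is then at least as large, which is the claim.

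The \emph{main technical obstacle} is the lower bound on $A(\hat d)$: Lemma~\ref{prop:CO} only gives the matching upper bound $A\lesssim \epsilon_n^2$, and turning this into a matching lower bound requires a sharp two-sided description of the symmetric-difference region and of the margin $|2\eta-1|$ on it, for which the precise Gaussian structure of our construction is essential. A secondary difficulty is to check that the constant $C_1$ in Proposition~\ref{theo:knngauss} can be taken uniformly positive over the range of $\hat d$ at stake and for the (fixed) mean separation $\approx 1$ of our construction; this should follow from an inspection of the proof of that proposition.
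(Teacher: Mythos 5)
Your overall strategy coincides with the paper's: decompose the excess risk as estimation error plus approximation error (Equation~\eqref{eq:bayes_bayesd}), invoke Proposition~\ref{theo:knngauss} for the estimation term, produce a pair $(f,g)\in\Hs(R)^2$ whose truncation loss does not decay polynomially in $d$, and balance the two contributions at $d\asymp\log n/\log\log n$. The decomposition, the conditioning on $\mathcal{S}_1$ to justify fixing $\hat d$, and the use of Proposition~\ref{theo:knngauss} are identical to the paper's proof.

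Where you diverge is in the hard instance and in the way you lower-bound the approximation error. The paper chooses a single pair with $\Delta^2-\Delta_d^2\sim d^{-2s}$ for \emph{all} $d$ (e.g.~coefficients of $f-g$ decaying like $j^{-s-1/2}$ up to a log correction) and takes $\min_d[d^{-2s}+n^{-4/(4+d)}]$ at the end; you instead place a single spike at frequency $D_n+1$ and split cases at $\hat d\lessgtr D_n$. Both constructions work, and yours has the small advantage of sidestepping the issue that a pure $j^{-s-1/2}$ decay sits exactly on the boundary of $\Hs(R)$. However, the technical point you flag as your ``main obstacle''---the two-sided lower bound $A(\hat d)\gtrsim\epsilon_n^2$---is in fact the easiest part of the paper's argument and does not require any analysis of the symmetric-difference region or of $|2\eta-1|$ at all. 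The paper simply writes the approximation error as a difference of one-dimensional Gaussian tail probabilities. Since under $\PR_f$ (resp.~$\PR_g$) the scores $\langle\Pi_d(f-g),X\rangle$ and $\langle f-g,X\rangle$ are Gaussians with known means and variances $\Delta_d^2$ and $\Delta^2$, one gets
\[
\mathcal{R}_{f,g}(\Phi_d^\star)-\mathcal{R}_{f,g}(\Phi^\star)\ \geq\ \int_{\Delta_d/2}^{\Delta/2}\gamma_1(s)\,\dd s\ \geq\ \gamma_1\!\left(\frac{\Delta}{2}\right)\frac{\Delta-\Delta_d}{2}\ \geq\ \frac{\gamma_1(\Delta/2)}{4\Delta}\,(\Delta^2-\Delta_d^2),
\]
where $\gamma_1$ is the standard Gaussian density. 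Applied to your spike construction (where $\Delta=\sqrt{1+\epsilon_n^2}$, $\Delta_{\hat d}=1$ for $\hat d\leq D_n$, hence $\Delta^2-\Delta_{\hat d}^2=\epsilon_n^2$), this immediately gives $A(\hat d)\gtrsim\epsilon_n^2$ with no further work. I would strongly encourage you to replace your sketched $|2\eta-1|$ integral with this one-line computation. Finally, both you and the paper are a bit loose about whether $k_n^{opt}(\hat d)=\lfloor n^{4/(4+\hat d)}\rfloor$ actually belongs to $\mathcal{K}_n$ for all relevant $\hat d$: your parenthetical ``the larger range ... is handled trivially'' is not quite trivial since Proposition~\ref{theo:knngauss} is stated only for $k\in\mathcal{K}_n$, so a separate (if elementary) argument is needed for very large $\hat d$. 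Your remark about checking that the constant $C_1$ from Proposition~\ref{theo:knngauss} can be taken uniformly positive is legitimate and, since your separation $\Delta$ is bounded in $[1,R]$ independently of $n$, should go through; this is also implicitly what the paper relies on.
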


The main conclusion of this section and of Theorem \ref{thm:lwr_bound_knn} is that the kNN rule based on a truncation strategy does not lead to satisfying rates of convergence, regardless the choice of the dimension $\hat{d}$ is. We stress that this result is only valid for a specific choice of $k_n^{opt}(\hat{d})$. Although this choice appears to be classic regarding the existing literature, obtaining a global lower bound (\ie, for any choice of $k$) remains an open (and difficult) problem. Even though we suspect that such a logarithmic lower bound also holds for some more general procedures (without sample splitting and with a more general possible choice of $k_n$), we do not have any proof of such a result.
However, it should be kept in mind that the misclassification of the classifier $\hat \Phi_{d_n}$ proposed in Equation \eqref{eq:classif} attains a polynomial rate of convergence, which is an important encouragement  for its use instead of the use of a nonparametric classifier associated with a threshold rule.

\appendix

\section{Proof of the upper bounds}\label{s:A}
The goal of this section is to prove the polynomial upper bound of Theorem~\ref{prop:minmax} together with the intermediate results of Proposition \ref{thm:margin} and Lemma~\ref{lem:proj}. We will pay a specific attention to the acceleration (in terms of the number $n$ of samples) obtained when the functions $f$ and $g$ appearing in \eqref{eq:model} are well separated.

\subsection{Proof of Proposition \ref{thm:margin} (control of the margin)}
\label{s:pmargin}
We start by proving Proposition \ref{thm:margin}, \ie, we analyze the margin behavior in Model \eqref{eq:model}. This result is a key ingredient to derive our excess risk upper bounds.

\begin{proof}[Proof of Proposition \ref{thm:margin}]

We use the Girsanov Equations (\ref{eq:qf}) and (\ref{eq:qg}) that define the likelihood ratio $q_f$ and $q_g$. We therefore deduce that
\begin{eqnarray}
\lefteqn{ \PR_{X} \left( \left| \eta(X) - \frac{1}{2} \right| \leq \epsilon   \right) }\nonumber \\
 &= &  \PR_{X} \left( \frac{|q_f(X) - q_g(X) |}{2(q_f(X) + q_g(X))} \leq \epsilon \right) \nonumber \\
& = &   \PR_{X} \left( \left\lbrace \frac{|q_f(X) - q_g(X) |}{2(q_f(X) + q_g(X))} \leq \epsilon \right\rbrace \cap \left\lbrace q_f(X) \leq q_g(X) \right\rbrace \right)+\PR_{X} \left( \left\lbrace \frac{|q_f(X) - q_g(X) |}{2(q_f(X) + q_g(X))} \leq \epsilon \right\rbrace \cap \left\lbrace  q_f(X)>q_g(X) \right\rbrace \right) \nonumber \\
  & \leq &   \PR_{X} \left( \left\lbrace \frac{|q_f(X) - q_g(X) |}{4 q_g(X)} \leq \epsilon \right\rbrace \cap \left\lbrace q_f(X) \leq q_g(X) \right\rbrace \right) +  \PR_{X} \left(\left\lbrace \frac{|q_f(X) - q_g(X) |}{ 4 q_f(X)} \leq \epsilon \right\rbrace \cap \left\lbrace q_f(X)>q_g(X) \right\rbrace\right) \nonumber \\ 
  & \leq &  \PR_{X} \left( \left|\frac{q_f(X)}{q_g(X)} -1 \right|  \leq 4 \epsilon  \right) +  \PR_{X} \left( \left|\frac{q_g(X)}{q_f(X)} -1 \right|  \leq 4 \epsilon  \right) \,. \label{eq:LRcontrol}
\end{eqnarray}
The two terms of the last line are handled similarly, and we only deal with the first one. We note that
$$
\frac{q_f(X)}{q_g(X)} = \exp\left(\int_{0}^1 (f-g)(s) dX_s - \frac{1}{2} [\|f\|^2-\|g\|^2] \right) \,.
$$
Using the fact that $Y \sim \mathcal{B}(1/2)$ and conditionning by $Y=1$ and $Y=0$, we can see that
\begin{eqnarray}
\lefteqn{ \PR_{X} \left( \left|\frac{q_f(X)}{q_g(X)} -1 \right|  \leq 4 \epsilon  \right)} \nonumber \\
&= & \PR \left( \left| e^{\int_{0}^1 (f-g)(s) f(s) ds +\int_{0}^1 (f-g)(s)  dW_s   - \frac{1}{2} [\|f\|^2-\|g\|^2]} - 1 \right| \leq 4 \epsilon \right) \PR(Y=1) \nonumber\\
 & & \hspace{2cm}+   \PR\left( \left| e^{\int_{0}^1 (f-g)(s) g(s) ds +\int_{0}^1 (f-g)(s)  dW_s   - \frac{1}{2} [\|f\|^2-\|g\|^2]} - 1 \right| \leq 4 \epsilon \right)\PR(Y=0) \nonumber\\
  & & =  \frac{1}{2}   \PR \left( \left|  e^{\frac{1}{2} \|f-g\|^2 +\int_{0}^1 (f-g)(s)  dW_s} - 1 \right| \leq 4 \epsilon \right) +
  \frac{1}{2}   \PR \left( \left|  e^{-\frac{1}{2} \|f-g\|^2 +\int_{0}^1 (f-g)(s)  dW_s} - 1 \right| \leq 4 \epsilon \right) \nonumber\\
  & & =  \frac{1}{2}   \PR \left( \left|  e^{\frac{1}{2} \Delta^2 + \Delta \xi } - 1 \right| \leq 4 \epsilon \right) +
  \frac{1}{2}   \PR \left( \left|  e^{-\frac{1}{2} \Delta^2 +\Delta \xi} - 1 \right| \leq 4 \epsilon \right) \,, \label{eq:intervalle_gaussienne}
\end{eqnarray}
where $\Delta := \|f-g\|$ and $\xi \sim \mathcal{N}(0,1)$ because $\int_{0}^1 [f(s)-g(s)] dW_s \sim \mathcal{N}(0,\Delta^2)$.\\

Using the inequalities $\ln(1+ 4\epsilon) \leq 4\epsilon$ and $\ln(1-4\epsilon)\geq -8\epsilon$ when $\epsilon \leq 1/8$, the above probability can be upper bounded as
\begin{eqnarray*}
 \PR_{X} \left( \left|\frac{q_f(X)}{q_g(X)} -1 \right|  \leq  4 \epsilon  \right) 
 & \leq  & \frac{1}{2} \PR \left(-\frac{8 \epsilon}{\Delta} - \frac{\Delta}{2} \leq \xi \leq \frac{4 \epsilon}{\Delta} - \frac{\Delta}{2}\right)   
 + \frac{1}{2} \PR \left(-\frac{8 \epsilon}{\Delta} + \frac{\Delta}{2} \leq \xi \leq \frac{4 \epsilon}{\Delta} + \frac{\Delta}{2} \right), \\
 & \leq & \frac{5 \epsilon}{\Delta},
\end{eqnarray*}
where the last inequality follows from $\PR(a \leq \xi \leq b) \leq (b-a)/\sqrt{2\pi}$ and $12/\sqrt{2 \pi}\leq 5$.
Inverting the roles of $f$ and $g$, we get by symmetry of the problem that the second term of~\eqref{eq:LRcontrol} is also upper bounded by $5 \epsilon / \Delta$. This concludes the proof. 
\end{proof}

\begin{rem}\label{rem:pid}
Following the same proof strategy, it is easy to check that the same result hold in the truncated space, \ie, replacing $\eta$ with $\eta_d$ and $\Delta$ with $\Delta_d := \|\Pi_d(f-g)\|$. Namely, for all $d \in \N^*$ and all $ 0 < \epsilon \leq 1/8$,
\[
\PR_{X} \left( \left| \eta_d(X) - \frac{1}{2} \right| \leq \epsilon   \right) \leq 1 \wedge \frac{10 \epsilon}{\Delta_d} \,.
\]
In particular, Equation \eqref{eq:intervalle_gaussienne} holds with $q_{f_d}(X)/q_{g_d}(X)$ on the left-hand side and with $\Delta_d$ on the right-hand side because $\int_{0}^1 \Pi_d(f-g)(s) dW_s \sim \mathcal{N}(0,\Delta_d^2)$.
\end{rem}

\subsection{Proof of Lemma~\ref{prop:CO} (control of the approximation error)}
\label{s:pCO}

One key ingredient of the proof is to control the excess risk $\mathcal{R}_{f,g}(\Phi^{\star}_d) - \mathcal{R}_{f,g}(\Phi^{\star})$ in terms of the closeness of $f_d$ and $g_d$ to $f$ and $g$ respectively. To do so, we set
$$
\delta_d := \|f-f_d\| = \sqrt{\|f\|^2-\|f_d\|^2} \qquad \text{and} \qquad \tilde{\delta}_d := \|g-g_d\| = \sqrt{\|g\|^2-\|g_d\|^2} \;.
$$

\begin{proof}[Proof of Lemma~\ref{prop:CO}]

We start with the well-known formula on the excess risk of any classifier (see, \eg, \cite{G78}):
$$
\mathcal{R}_{f,g}(\Phi^{\star}_d) - \mathcal{R}_{f,g}(\Phi^{\star}) = \mathbb{E} \left[  \left| 2 \eta(X) - 1 \right|
\mathds{1}_{\left\lbrace \Phi_d^{\star}(X) \neq \Phi^{\star}(X) \right\rbrace}
 \right].
$$
Then, following a classical control of the excess risk (see, \eg, 
\cite{GKM16}),
\begin{eqnarray}
\mathcal{R}_{f,g}(\Phi^{\star}_d) - \mathcal{R}_{f,g}(\Phi^{\star}) &=& \mathbb{E} \left[  \left| 2 \eta(X) - 1 \right|
\mathds{1}_{\left\lbrace \Phi_d^{\star}(X) \neq \Phi^{\star}(X) \right\rbrace} \left[ 
\mathds{1}_{\left\lbrace |\eta(X)-1/2| \leq \epsilon \right\rbrace}+\mathds{1}_{\left\lbrace |\eta(X)-1/2| > \epsilon \right\rbrace}
\right]  \right] \nonumber \\
& \leq &  \underbrace{2 \epsilon \PR \left( |\eta(X)-1/2| \leq \epsilon \right)}_{:=T_1} +
\underbrace{\PR\bigl(
\{ \Phi_d^{\star}(X) \neq \Phi^{\star}(X) \}  \cap \{|\eta(X)-1/2| > \epsilon \}\bigr)}_{:=T_2} \,.
\label{eq:inter1}
\end{eqnarray}
Note that, up to the quantity $2\epsilon$,  the term $T_1$ corresponds to the margin behavior discussed in Section~\ref{s:margin} above. By Proposition \ref{thm:margin} (note that $0<\epsilon \leq 1/8$), we have
$$ T_1 := 2 \epsilon \PR\bigl( |\eta(X)-1/2| \leq \epsilon \bigr) \leq 2\epsilon \left( 1 \wedge \frac{10 \epsilon}{\Delta}   \right).$$

\noindent
To control the second term $T_2$, we note (classically) that $\Phi^{\star}(X) = \mathds{1}_{\eta(X) \geq 1/2}$ and $\Phi_d^{\star}(X) = \mathds{1}_{\eta_d(X) \geq 1/2}$ together imply that
$$ T_2 := \PR\bigl(
\{ \Phi_d^{\star}(X) \neq \Phi^{\star}(X) \}  \cap \{|\eta(X)-1/2| > \epsilon \}\bigr) \leq \mathbb{P} \bigl( |\eta_d(X) - \eta(X) | > \epsilon \bigr) \,. $$
Using $Y \sim \mathcal{B}(1/2)$ and the conditional distribution of $X \vert Y$, we have
$$
T_2 \leq \frac{1}{2} \underbrace{\PR_{f} \bigl(|\eta_d(X)-\eta(X)|> \epsilon\bigr)}_{:=T_{2,1}} +  \frac{1}{2} \underbrace{\PR_{g} \bigl(|\eta_d(X)-\eta(X)|> \epsilon\bigr) }_{:=T_{2,2}}.
$$
For the sake of brevity, we only study $T_{2,1}$ (the second term $T_{2,2}$ can be upper bounded similarly by symmetry of the problem and by inverting the roles of $f$ and $g$).
 To alleviate the notation, we set $f_d:=\Pi_d(f)$ and $g_d:=\Pi_d(g)$. 
Recall from~\eqref{eq:qf} that $q_{f}$ denotes the likelihood ratio of the model $\PR_{f}$. Next we decompose $\eta-\eta_d $ using the four (a.s. positive) likelihood ratios $q_f$, $q_g$, $q_{f_d}$, and $q_{g_d}$:
$$
 \eta-\eta_d = \frac{q_f}{q_f+q_g} - \frac{q_{f_d}}{q_{f_d}+q_{g_d}} = \frac{q_{f}-q_{f_d}}{q_{f}+q_g} + q_{f_d} \left(\frac{1}{q_f+q_g}-\frac{1}{q_{f_d}+q_{g_d}}\right).
$$
In order to upper bound $T_{2,1}$, we use the triangle inequality three times in the decomposition above, we note that
\[
\left|\frac{1}{q_f+q_g}-\frac{1}{q_{f_d}+q_{g_d}}\right| = \left|\frac{q_{f_d}-q_f+q_{g_d}-q_g}{(q_f+q_g)(q_{f_d}+q_{g_d})}\right| \leq \frac{\left|q_{f_d}-q_f\right|}{q_f q_{f_d}} + \frac{\left|q_{g_d}-q_g\right|}{q_g q_{f_d}} \;,
\]
and we use the inclusion $\{Z_1+Z_2+Z_3>\epsilon\} \subseteq \{Z_1>\epsilon/2\} \cup \{Z_2>\epsilon/4\} \cup \{Z_3>\epsilon/4\}$ valid vor any random variables $Z_1, Z_2, Z_3$. We get:
\begin{eqnarray*}
\lefteqn{T_{2,1}}\\
 & \leq& \PR_f \left( |q_{f_d}(X)-q_f(X)| > \frac{\epsilon}{2} |q_f(X)+q_g(X)| \right)   +  \PR_f \left( q_{f_d}(X)|q_f(X)-q_{f_d}(X)| > \frac{\epsilon}{4} q_f(X) q_{f_d}(X)| \right) \\&&+ 
 \PR_f \left( q_{f_d}(X)|q_g(X)-q_{g_d}(X)| > \frac{\epsilon}{4} q_g(X) q_{f_d}(X)| \right)  \\
 & \leq &  \PR_f \left( |q_{f_d}(X)-q_{f}(X)| > \frac{\epsilon}{2} q_f(X) \right) + \PR_f \left( |q_{f_d}(X)-q_f(X)| > \frac{\epsilon}{4} q_f(X) \right)  + \PR_f \left( |q_{g_d}(X)-q_g(X)| > \frac{\epsilon}{4} q_g(X) \right) \\
 & \leq & 2\PR_f \left( |q_{f_d}(X)-q_f(X)| > \frac{\epsilon}{4} q_f(X) \right)  + \PR_f \left( |q_{g_d}(X)-q_g(X)| > \frac{\epsilon}{4} q_g(X) \right) \\
\end{eqnarray*}
Taking the logarithm, we can see that:
$$\PR_f \left( \left|\frac{q_{f_d}(X)}{q_f(X)}-1\right| > \frac{\epsilon}{4}  \right)  = \PR_f \left(\log \left( \frac{q_{f_d}}{q_f} \right) <  \log (1-\epsilon/4) \right) +\PR_f \left( \log \left( \frac{q_{f_d}}{q_f} \right)  > \log(1+\epsilon/4) \right)$$
Using the inequalities $\log(1+\epsilon/4) \geq \epsilon/8$ and $\log(1-\epsilon/4) \leq -\epsilon/4$ (that hold at least for all $0 < \epsilon \leq 1$) we obtain:
\begin{equation}
T_{2,1} \leq 2  \underbrace{\PR_f \left(  \log \frac{q_{f_d}(X)}{q_f(X)}  > \epsilon/8 \right) }_{:=S_1} + 
2 \underbrace{\PR_f  \left(\log  \frac{q_{f_d}(X)}{q_f(X)} < -\frac{\epsilon}{4} \right)}_{:=S_2} +  \underbrace{\PR_f \left(  \log \frac{q_{g_d}(X)}{q_g(X)}  > \frac{\epsilon}{8} \right)}_{:=S_3}  + 
 \underbrace{\PR_f  \left(\log  \frac{q_{g_d}(X)}{q_g(X)} < -\frac{\epsilon}{4} \right)}_{:=S_4} .
\label{eq:T21}
\end{equation}
The Girsanov formula makes it possible to write
$
\log \frac{q_{f_d}(X)}{q_f(X)} = \int_{0}^1 (f_d-f)(s) dX_s - \frac{1}{2} [\|f_d\|^2-\|f\|^2].
$
We study $S_1$ and remark that under $\PR_f$, $dX_s = f(s) ds + dW_s$ for all $s\in [0,1]$ so that
\begin{eqnarray*}
S_1= \PR_f \left(  \log \frac{q_{f_d}(X)}{q_f(X)}  > \frac{\epsilon}{8} \right) 
& = & \PR\left( \langle f_d-f,f \rangle + \int_{0}^1 (f_d-f)(s) dW_s - \frac{1}{2}\left[ \|f_d\|^2-\|f\|^2\right] >  \frac{\epsilon}{8}\right), \\
& = &  \PR\left(  \int_{0}^1 (f_d-f)(s) dW_s > \frac{1}{2}\left[ \|f\|^2-\|f_d\|^2\right] +  \frac{\epsilon}{8}\right)\\
& \leq & \PR \left( \xi >  \frac{\epsilon}{8}\right) \,,
\end{eqnarray*}
where  $\xi \sim \mathcal{N}\bigl(0,\|f_d-f\|^2\bigr) = \mathcal{N}(0,\delta_d^2)$. But, by a classical (sub)Gaussian tail bound stated, \eg, in \cite[p.22]{BLM}, we get
\begin{equation}
S_1  \leq \exp\left(-\frac{(\epsilon/8)^2}{2 \delta_d^2}\right) = \exp\left(-\frac{\epsilon^2}{128 \delta_d^2}\right) \,.
\label{eq:contS1}
\end{equation}
Combining the last inequality with the assumption
\begin{equation*}
\delta_d^2 \leq \frac{\epsilon^2}{512 \ln(1/\epsilon^2)} \leq \frac{\epsilon^2}{128 \ln(1/\epsilon^2)} \,,
\end{equation*}
we finally obtain $S_1  \leq \epsilon^2$.

\ \\
The second term $S_2$ introduced in (\ref{eq:T21}) can be dealt similarly, except that we can no longer neglect the positive term $\bigl(\|f\|^2-\|f_d\|^2\bigr)/2 = \delta_d^2/2$: considering again $\xi \sim \mathcal{N}(0,\delta_d^2)$, we have
\begin{align*}
S_2 & = \PR_f \left(  \log \frac{q_{f_d}(X)}{q_f(X)}  < -\frac{\epsilon}{4} \right) 
 = \PR \left( \xi < \frac{\delta_d^2}{2} - \frac{\epsilon}{4}\right) 
 \leq \PR \left( \xi < - \frac{\epsilon}{8}\right) \leq \epsilon^2 \,,
\end{align*}
where the last inequality follows from the same Gaussian concentration argument as in~\eqref{eq:contS1}, and where the inequality before last is because $\delta_d^2/2 \leq \epsilon/8$. Indeed, by the assumptions of Lemma~\ref{prop:CO},
\begin{equation}
\epsilon \geq \sqrt{512 \ln(8^2)} \max\bigl\{\delta_d,\tilde{\delta}_d\bigr\} \geq 24 \max\bigl\{\delta_d^2,\tilde{\delta}_d^2\bigr\} \,,
\label{eq:epsilondelta}
\end{equation}
where the second inequality follows from $\max\{\delta_d, \tilde{\delta}_d\} \leq \epsilon/24 \leq 1$ (as a result of the first inequality and $\epsilon \leq 1$). Therefore, $\delta_d^2/2 \leq \epsilon/48 \leq \epsilon/8$ as claimed above.

\ \\
We now focus on $S_3$: noting that
$
\log \frac{q_{g_d}(X)}{q_g(X)} = \int_{0}^1 (g_d-g)(s) dX_s - \frac{1}{2} [\|g_d\|^2-\|g\|^2],
$
we get
\begin{align*}
S_3 = \PR_f \left(  \log \frac{q_{g_d}(X)}{q_g(X)}  > \frac{\epsilon}{8} \right) & = \PR \left(
\langle g_d-g,f\rangle + \int_{0}^1 (g_d-g)(s) dW_s - \frac{1}{2} \left[ \|g_d\|^2-\|g\|^2 \right]> \frac{\epsilon}{8} \right) \\
& = \PR \left(
\langle g_d-g,f-f_d\rangle + \int_{0}^1 (g_d-g)(s) dW_s - \frac{1}{2} \left[ \|g_d\|^2-\|g\|^2 \right]> \frac{\epsilon}{8} \right) \,,
\end{align*}
where we used the fact that $g_d-g$ and $f_d$ are orthogonal.
Recall now that $\delta_d = \|f-f_d\|$ and $\tilde{\delta}_d = \|g-g_d\| = \sqrt{\|g\|^2-\|g_d\|^2}$.
If  $\tilde \xi  \sim \mathcal{N}(0,\tilde \delta_d^2)$, the last equality entails
\begin{equation*}
S_3 \leq \PR \left( \tilde \xi  > \frac{\epsilon}{8} - \frac{\tilde{\delta}_d^2}{2} - \tilde{\delta}_d \delta_d  \right) \leq \PR \left( \tilde \xi  > \frac{\epsilon}{8} - \frac{\tilde{\delta}_d^2}{2} - \frac{\tilde{\delta}_d^2}{2} - \frac{\delta_d^2}{2} \right) \leq \PR \left( \tilde \xi  > \frac{\epsilon}{16} \right) \leq \epsilon^2 \,,
\end{equation*}
where the second inequality follows from $\tilde{\delta}_d \delta_d \leq \bigl(\tilde{\delta}_d+\delta_d^2\bigr)/2$, where the third inequality is because $\max\bigl\{\delta_d^2,\tilde{\delta}_d^2\bigr\} \leq \epsilon/24$ (by~\eqref{eq:epsilondelta}), and where the last inequality follows from the same Gaussian tail bound as the one used in~\eqref{eq:contS1} and from the assumption $\tilde{\delta}_d^2 \leq \epsilon^2/\bigl(512 \ln(1/\epsilon^2)\bigr)$.

\ \\
A similar analysis shows that the last term $S_4$ introduced in (\ref{eq:T21}) also satisfies $S_4 \leq \epsilon^2$. Putting everything together, we finally get
\begin{equation*}
T_{2,1} \leq 6\epsilon^2 \,.
\end{equation*} 
By symmetry of the problem and by inverting the roles of $f$ and $g$, we can also see that $T_{2,2} \leq 6 \epsilon^2$. Summing the bounds on $T_1$, $T_{2,1}$, and $T_{2,2}$ concludes the proof.
\end{proof}
%
%
%
%

\subsection{Proof of Lemma \ref{lem:proj} (control of the estimation error)}
\label{s:pproj}

Though we now focus on the estimation error, most of the proof follows similar arguments as for Lemma~\ref{prop:CO} above: comparison of two regression functions, and Gaussian-type concentration inequalities.

\begin{proof}[Proof of Lemma \ref{lem:proj}]
Recall from Remark~\ref{rmk:truncatedSpace} (Section~\ref{sec:estimationerror}) that $\eta_d$ and $\Phi^*_d = \mathds{1}_{\{\eta_d \geq 1/2\}}$ correspond to the regression function and the Bayes classifier of the classification problem when the learner has only access to the projected input $\bfX_d := (\langle\varphi_j,X\rangle)_{1 \leq j \leq d}$. Since $\hat{\Phi}_d(X)$ only depends on $X$ through $\bfX_d$, its excess risk can be rewritten as
$$ \mathcal{R}_{f,g}(\hat\Phi_d) - \mathcal{R}_{f,g}(\Phi_d^\star) = \E \left[ |2\eta_d(X)-1| \mathds{1}_{\lbrace \hat\Phi_d(X) \not = \Phi_d^\star(X)  \rbrace}   \right] \,, $$
where the expectation is with respect to both the sample $(X_i,Y_i)_{1 \leq i \leq n}$ and the new input $X$. Now, for all $\epsilon >0$,
\begin{eqnarray*}
\mathcal{R}_{f,g}(\hat\Phi_d) - \mathcal{R}_{f,g}(\Phi_d^\star)  &=& 
\E\!\left[ |2\eta_d(X)-1| \mathds{1}_{\lbrace \hat\Phi_d(X) \not = \Phi_d^\star(X)  \rbrace} \mathds{1}_{
\lbrace{|\eta_d(X)-1/2| \leq \epsilon  \rbrace}}    \right] \nonumber\\
&&
+ \, \E \left[ |2\eta_d(X)-1| \mathds{1}_{\lbrace \hat\Phi_d(X) \not = \Phi_d^\star(X)  \rbrace} \mathds{1}_{
\lbrace{|\eta_d(X)-1/2| > \epsilon  \rbrace}}    \right]
\nonumber\\
&\leq & 2 \epsilon \PR_{X} \left( |\eta_d(X)-1/2| \leq \epsilon \right)  + \mathbb{P} ( | \hat \eta_d(X) - \eta_d(X) | > \epsilon) \,,
\end{eqnarray*}
where the last inequality follows from the inclusion $\bigl\{ \hat\Phi_d(X) \not = \Phi_d^\star(X) \bigr\}  \cap \bigl\{|\eta_d(X)-1/2| > \epsilon \bigr\} \subseteq \bigl\{|\hat{\eta}_d(X)-\eta_d(X)| > \epsilon  \bigr\}$ (because $\hat{\Phi}_d(X) = \mathds{1}_{\hat{\eta}_d(X) \geq 1/2}$ and $\Phi^*_d(X) = \mathds{1}_{\eta_d(X) \geq 1/2}$).
We can now apply the adaptation of Proposition~\ref{thm:margin} to the truncated space (see Remark~\ref{rem:pid}) to get
\begin{equation}\label{eq:toto}
\mathcal{R}_{f,g}(\hat\Phi_d) - \mathcal{R}_{f,g}(\Phi_d^\star) \leq 2 \epsilon \left( 1 \wedge\frac{10 \epsilon}
{\Delta_d}\right)
+ \mathbb{P} ( | \hat \eta_d(X) - \eta_d(X) | > \epsilon) \,.
\end{equation}
\noindent
Using $Y \sim \mathcal{B}(1/2)$ and the conditional distribution of $X$ given $Y$, we have:
$$ \mathbb{P} ( | \hat \eta_d(X) - \eta_d(X) | > \epsilon) = \frac{1}{2}  \underbrace{\mathbb{P}_f ( | \hat \eta_d(X) - \eta_d(X) | > \epsilon)}_{:=T_1} + \frac{1}{2} \underbrace{\mathbb{P}_g ( | \hat \eta_d(X) - \eta_d(X) | > \epsilon)}_{:=T_2} \,,$$
where, with a slight abuse of notation, the first probability $\PR_f(\cdot)$ is with respect to both the sample $(X_i,Y_i)_{1 \leq i \leq n}$ drawn i.i.d.~from~\eqref{eq:model} and a new independent input $X$ drawn from $\PR_f$; and similarly for the second probability $\PR_g(\cdot)$.

We now focus on $T_1$ until the end of the proof. (The control of $T_2$ is exactly similar, by symmetry of the model and by inverting the roles of $f$ and $g$.) Denote by $\gamma_d(x)=(2\pi)^{-d/2}e^{-\|x\|^2/2}$ the density of the standard Gaussian distribution on $\mathbb{R}^d$. By Remark~\ref{rmk:truncatedSpace} (Section~\ref{sec:estimationerror}), we have, setting $\bftheta_d := (\theta_1,\ldots,\theta_d)$ and $\bfmu_d := (\mu_1,\ldots,\mu_d)$,
\[
\eta_d(X) = \frac{F_d(X)}{F_d(X)+G_d(X)} \,, \quad \textrm{where} \quad  F_d (x) = \gamma_d(x- \bftheta_d) \quad \textrm{and} \quad G_d(x) = \gamma_d(x-\bfmu_d) \,.
\]
Similarly, by~\eqref{eq:etadestim}, the estimated regression function $\hat{\eta}_d$ can be rewritten as
$$ \hat \eta_d(X) = \frac{ \hat F_d(X) }{\hat F_d(X)+ \hat G_d(X)} \,, \quad \textrm{where} \quad \hat F_d (x) = \gamma_d(x- \hat \theta) \quad \textrm{and} \quad \quad\hat G_d(x) = \gamma_d(x-\hat \mu) \,.$$
Using simple algebra, we get
\begin{eqnarray}
\lefteqn{T_1:= \mathbb{P}_f ( | \hat \eta_d(X) - \eta_d(X) | > \epsilon) } \nonumber \\
& = & \mathbb{P}_f\left(  \left| \frac{\hat F_d(X)}{\hat F_d(X) + \hat G_d(X) } - \frac{F_d(X)}{F_d(X)+G_d(X)}      \right| > \epsilon \right)  \nonumber \\
& \leq & \mathbb{P}_f\left(  \left| \frac{\hat F_d(X)- F_d(X)}{ F_d(X) +  G_d(X) } + \hat F_d(X) \left( \frac{1}{\hat F_d(X)+\hat G_d(X)} -\frac{1}{F_d(X)+G_d(X)} \right)      \right| > \epsilon \right) \nonumber  \\
& \leq & \mathbb{P}_f\left(  \left| \frac{\hat F_d(X)- F_d(X)}{ F_d(X) + G_d(X) }  \right| > \frac{\epsilon}{3} \right)  + \mathbb{P}_f \left( \left|  \hat F_d(X) \left( \frac{1}{\hat F_d(X)+\hat G_d(X)} -\frac{1}{F_d(X)+G_d(X)} \right)      \right| > \frac{2 \epsilon}{3} \right)  \nonumber \\
& =: & \mathbb{P}(A_1) + \mathbb{P}(A_2) \,.
\label{eq:pa12}
\end{eqnarray}
\ \\
\textbf{Control of $\mathbb{P}(A_1)$.} First note that
\begin{eqnarray}
 \mathbb{P}(A_1) 
 & = & \mathbb{P}_f\left( | \hat F_d(X) - F_d(X) | > \frac{\epsilon}{3} (F_d(X)+G_d(X)) \right) \nonumber \\ 
 & \leq & \mathbb{P}_f\left( |\hat F_d(X) - F_d(X) | > \frac{\epsilon}{3} F_d(X) \right) \label{eq:FdFdhat} \\
 & = & \mathbb{P}_f\left( \bigl| \hat F_d(X) /F_d(X) -1 \bigr| > \frac{\epsilon}{3}\right) \nonumber\\
 & = & \mathbb{P}_f \left( \left|  e^{\langle \bfX_d-(\hat{\theta}+ \bftheta_d)/2 , \hat{\theta} - \bftheta_d \rangle} -1    \right| > \frac{\epsilon}{3}   \right) \,. \nonumber
\end{eqnarray}
Since we have $-\log(1-u) \geq \log(1+u) \geq u/2$ for $u \in (0,1)$, some straightforward computations yield:
\begin{eqnarray*}
 \mathbb{P}(A_1) 
 & \leq & \mathbb{P}_f \left( \left| \left\langle \bfX_d-\frac{\hat{\theta}+ \bftheta_d}{2} , \hat{\theta} - \bftheta_d \right\rangle \right| > \log\left(1+\frac{\epsilon}{3}\right) \right) \\
& \leq & \mathbb{P}_f \left( \left| \left\langle \bfX_d-\frac{\hat{\theta}+ \bftheta_d}{2} , \hat{\theta} - \bftheta_d \right\rangle \right| > \frac{\epsilon}{6}\right) \\
 & = & \mathbb{P}_f \left( \left|\left\langle \bfX_d-\bftheta_d +\frac{\bftheta_d- \hat{\theta}}{2} , \hat{\theta} - \bftheta_d \right\rangle \right| > \frac{\epsilon}{6} \right) \\
 & \leq & \mathbb{P}_f \left( \left|\left\langle \bfX_d-\bftheta_d , \hat{\theta} - \bftheta_d \right\rangle \right| > \frac{\epsilon}{6} - \frac{\|\hat{\theta} - \bftheta_d\|^2}{2} \right) \,.
 \end{eqnarray*}
 Now, note from~\eqref{eq:Gaussiancoeffs}, \eqref{eq:defX0X1}--\eqref{eq:defhatthethamu}, and Remark~\ref{rmk:conditionallyGaussian} that, under $\mathbb{P}_{\otimes^n} \otimes \PR_f$ and on the event $\{N_1 > 0\}$, the random variables $\xi_j := X_{d,j}-\theta_{d,j} = \langle \varphi_j , X \rangle - \theta_j$, $1 \leq j \leq d$, and
\[
\zeta_j := \sqrt{N_1} \left(\hat \theta_{j} - \theta_{d,j}\right) = \frac{1}{\sqrt{N_1}} \sum_{i=1}^{N_1} \epsilon^1_{i,j} \,, \qquad 1 \leq j \leq d\,,
\]
are i.i.d.~$\cN(0,1)$ conditionally on $Y_{1:n} := (Y_1,\ldots,Y_n)$. (On the event $\{N_1=0\}$, we define the $\zeta_j$ so as to coincide with other independent $\cN(0,1)$ random variables $\zeta'_j$.) As a consequence, the random variables $\xi_1,\ldots,\xi_d,\zeta_1,\ldots,\zeta_d$ are i.i.d.~$\cN(0,1)$ (unconditionally).

\ \\
Note also from Hoeffding's lemma (see, \eg, \cite{BLM}) and $n/2 - \sqrt{n \log(n)/2} \geq n/4$ (because $n \geq 27$) that
\begin{equation}
 \PR\left( N_1 < \frac{n}{4} \right) \leq  \PR\left( N_1 < \frac{n}{2} - \sqrt{\frac{n \log n}{2}} \right) \leq \frac{1}{n} \;.
\label{eq:Hoeffdingn4}
\end{equation}
Therefore, we deduce that
\begin{eqnarray}
 \mathbb{P}(A_1) & \leq & \mathbb{P}_f \left( \left|\left\langle \bfX_d-\bftheta_d , \hat{\theta} - \bftheta_d \right\rangle \right| > \frac{\epsilon}{6} - \frac{\|\hat{\theta} - \bftheta_d\|^2}{2}\,, N_1 \geq \frac{n}{4} \right) + 
 \, \PR\left( N_1 < \frac{n}{4} \right) \nonumber \\
&\leq & \mathbb{P} \left( \left| \sum_{j=1}^d \xi_j \zeta_j \right| \geq \frac{\sqrt{N_1} \epsilon}{6} - \frac{\|\zeta\|^2}{2 \sqrt{N_1}} \,, N_1 \geq \frac{n}{4} \right)  + \frac{1}{n} \nonumber \\
&\leq & \mathbb{P} \left( \left| \sum_{j=1}^d \xi_j \zeta_j \right| \geq \frac{\sqrt{n} \epsilon}{12} - \frac{\|\zeta\|^2}{\sqrt{n}} \right)  + \frac{1}{n} \nonumber \\
&\leq & \mathbb{P} \left( \left| \sum_{j=1}^d \xi_j \zeta_j \right| \geq \frac{\sqrt{n} \epsilon}{24} \right) + \mathbb{P} \left( \|\zeta\|^2 > \frac{n \epsilon}{24} \right)  + \frac{1}{n} \,. \label{eq:PA1twodeviations}
\end{eqnarray}
We control the first deviation probability above. First, recalling that the $\xi_j$ and $\zeta_j$ are i.i.d.~$\cN(0,1)$, and conditioning by $(\xi_1,\ldots,\xi_d)$, we get
\[
\mathbb{P} \left( \left| \sum_{j=1}^d \xi_j \zeta_j \right| \geq \frac{\sqrt{n} \epsilon}{24} \right)  \leq \mathbb{E} \left[  \mathbb{P} \left(    \left| \sum_{j=1}^d \xi_j \zeta_j \right| \geq \frac{\sqrt{n} \epsilon}{24} \; \Big| \; \xi_1,\dots,\xi_d    \right)  \right] \leq 2 \, \mathbb{E} \left[  \exp \left(  - \frac{n \epsilon^2}{1152 \sum_{j=1}^d \xi_j^2} \right) \right]\,, \nonumber
\]
where the last inequality is because, conditionally on $(\xi_1,\ldots,\xi_d)$, the random variable $Z=\sum_{j=1}^d \xi_j \zeta_j$ is Gaussian with zero mean and variance $V=\sum_{j=1}^d \xi_j^2$ and thus satisfies $\PR(|Z| > z \, | \, \xi_1,\dots,\xi_d) \leq 2 e^{-z^2/(2 V)}$ for all $z>0$. But, distinguishing whether $\sum_{j=1}^d \xi_j^2$ is below or above $4 d \log n$, we obtain
\begin{align*}
\mathbb{P} \left( \left| \sum_{j=1}^d \xi_j \zeta_j \right| \geq \frac{\sqrt{n} \epsilon}{24} \right) & \leq 2 \exp\!\left( \!- \frac{n \epsilon^2}{4608 d \log n } \right) + 2 \PR\!\left(\sum_{j=1}^d \xi_j^2 > 4 d \log n\right) \leq 2 \exp\!\left(\! - \frac{n \epsilon^2}{4608 d \log n } \right) + \frac{2}{n} \,,
\end{align*}
where we used the concentration inequality for the $\chi^2$ statistics of \cite[Lemma~1]{Beatrice2000}
\begin{equation}
\forall x >0\,, \qquad \mathbb{P}\left( \sum_{j=1}^d \xi_j^2 > d + 2 \sqrt{d x} + 2x   \right) \leq e^{-x}
\label{eq:chi2concentration}
\end{equation}
for $x=\log(n)$, and where we noted (since $2 a b \leq a^2 + b^2$ and $\log n \geq 2$ for $n \geq 27$) that
\begin{equation}
d + 2 \sqrt{d \log n} + 2 \log n \leq 2d + 3 \log n \leq 4 d \log n \,.
\label{eq:bounddlogn}
\end{equation}
Plugging the above inequalities into~\eqref{eq:PA1twodeviations}, we finally obtain
\begin{equation}
\mathbb{P}(A_1) \leq 2 \exp \left(  - \frac{n \epsilon^2}{4608 d \log n } \right) + \mathbb{P} \left( \|\zeta\|^2 > \frac{n \epsilon}{24} \right) + \frac{3}{n} \,.
\label{eq:PA1bound}
\end{equation}

\ \\
\textbf{Control of $\mathbb{P}(A_2)$.} We have:
\begin{eqnarray*}
\mathbb{P}(A_2) 
& := & \mathbb{P}_f \left( \left|  \hat F_d(X) \left( \frac{1}{\hat F_d(X)+\hat G_d(X)} -\frac{1}{F_d(X)+G_d(X)} \right)      \right| > \frac{2\epsilon}{3} \right) \\
& = & \mathbb{P}_f \left( \left|  \hat F_d(X) \left( \frac{F_d(X)- \hat F_d(X) + G_d(X) - \hat G_d(X)}{(\hat F_d(X)+\hat G_d(X))(F_d(X)+G_d(X))} \right)      \right| > \frac{2\epsilon}{3} \right) \\
& \leq & \mathbb{P}_f \left( \left|  \hat F_d(X) (F_d(X)- \hat F_d(X) ) \right| > \frac{\epsilon}{3} (F_d(X)+ G_d(X)) (\hat F_d(X) + \hat G_d(X)) \right) \\
&  & + \, \mathbb{P}_f \left( \left|  \hat F_d(X) (G_d(X)- \hat G_d(X) ) \right| > \frac{\epsilon}{3} (F_d(X)+ G_d(X)) (\hat F_d(X) + \hat G_d(X)) \right) \\
& \leq & \mathbb{P}_f \left( \left|  \hat F_d(X) (F_d(X)- \hat F_d(X) ) \right| > \frac{\epsilon}{3} F_d(X)\hat F_d(X)  \right) \\
&  & + \, \mathbb{P}_f \left( \left|  \hat F_d(X) (G_d(X)- \hat G_d(X) ) \right| > \frac{\epsilon}{3}  G_d(X) \hat F_d(X) \right) \\
& \leq & \mathbb{P}_f \left( \left| F_d(X)- \hat F_d(X)  \right| > \frac{\epsilon}{3} F_d(X) \right)  + \mathbb{P}_f \left( \left|  G_d(X)- \hat G_d(X)  \right| > \frac{\epsilon}{3}  G_d(X)  \right) \,. \\
\end{eqnarray*}
The first term has already been studied above (see~\eqref{eq:FdFdhat} and the following inequalities) and thus satisfies the same upper bound as $\mathbb{P}(A_1)$ in~\eqref{eq:PA1bound}. As for the second term, following the same lines as those leading to~\eqref{eq:PA1twodeviations}, we can see that
\begin{align*}
& \mathbb{P}_f \left( \left|  G_d(X)- \hat G_d(X)  \right| > \frac{\epsilon}{3}  G_d(X)  \right) \\
& \qquad \leq \mathbb{P}_f \left( \left| \left\langle \bfX_d-\frac{\hat{\mu}+ \bfmu_d}{2} , \hat{\mu} - \bfmu_d \right\rangle \right| > \frac{\epsilon}{6}\right) \\
& \qquad \leq \mathbb{P}_f \left( \Big| \big\langle \bfX_d-\bftheta_d, \hat{\mu} - \bfmu_d \big\rangle \Big| > \frac{\epsilon}{6} - \left|\left\langle \bftheta_d - \bfmu_d + \frac{\bfmu_d - \hat{\mu}}{2}, \hat{\mu} - \bfmu_d \right\rangle\right|\right) \\
& \qquad \leq \mathbb{P}_f \biggl( \Big| \big\langle \bfX_d-\bftheta_d, \hat{\mu} - \bfmu_d \big\rangle \Big| > \frac{\epsilon}{6} - \underbrace{\left(\|\bftheta_d - \bfmu_d\| + \frac{\|\hat{\mu} - \bfmu_d\|}{2}\right) \|\hat{\mu} - \bfmu_d\|}_{\leq 4 \bigl(\Delta_d + 2 \sqrt{\frac{d\log(n)}{n}} \, \bigr) \sqrt{\frac{d\log(n)}{n}} \textrm{ w.p. $\geq 1-2/n$}} \, \biggr) \\
& \qquad \leq \mathbb{P}_f \left( \Big| \big\langle \bfX_d-\bftheta_d, \hat{\mu} - \bfmu_d \big\rangle \Big| > \frac{\epsilon}{12} \right) + \frac{2}{n} \,,
\end{align*}
where we used~\eqref{eq:Hoeffdingn4} and~\eqref{eq:chi2concentration}--\eqref{eq:bounddlogn} again, and where the last inequality holds true whenever
\begin{equation}
\left(\Delta_d + 2 \sqrt{\frac{d\log(n)}{n}} \, \right) \sqrt{\frac{d\log(n)}{n}} \leq \frac{\epsilon}{48} \,.
\label{eq:conditionDeltadEpsilon}
\end{equation}
Mimicking what we did to derive~\eqref{eq:PA1twodeviations}, we then get
\begin{align*}
\mathbb{P}_f\!\left( \left|  G_d(X)- \hat G_d(X)  \right| > \frac{\epsilon}{3}  G_d(X)  \right) & \leq \mathbb{P}\!\left( \left| \sum_{j=1}^d \xi_j \zeta_j \right| \geq \frac{\sqrt{n} \epsilon}{24} \right) + \frac{1}{n} + \frac{2}{n} \leq 2 \exp \left(  - \frac{n \epsilon^2}{4608 d \log n } \right) + \frac{5}{n} \,.
\end{align*}
Putting everything together, we can see that, provided~\eqref{eq:conditionDeltadEpsilon} holds,
\[
\mathbb{P}(A_2)  \leq 4 \exp \left(  - \frac{n \epsilon^2}{4608 d \log n } \right) + \mathbb{P} \left( \|\zeta\|^2 > \frac{n \epsilon}{24} \right) + \frac{8}{n} \,.
\]

\ \\
\textbf{Conclusion} Combining all results above, we get, under condition~\eqref{eq:conditionDeltadEpsilon},
\[
T_1 = \mathbb{P}(A_1) + \mathbb{P}(A_2) \leq 6 \exp \left(  - \frac{n \epsilon^2}{4608 d \log n } \right) + 2 \, \mathbb{P} \left( \|\zeta\|^2 > \frac{n \epsilon}{24} \right) + \frac{11}{n}
\]
so that (the upper bound on $T_2$ is identical by symmetry of the problem):
\[
\mathbb{P} ( | \hat \eta_d(X) - \eta_d(X) | > \epsilon) \leq 6 \exp \left(  - \frac{n \epsilon^2}{4608 d \log n } \right) + 2 \, \mathbb{P} \left( \|\zeta\|^2 > \frac{n \epsilon}{24} \right) + \frac{11}{n} \,.
\]
To conclude the proof, we note that, if~\eqref{eq:conditionDeltadEpsilon} holds true, then $n\epsilon/24 \geq 4 d \log n \geq d + 2 \sqrt{d \log n} + 2 \log n$ (by~\eqref{eq:bounddlogn}), so that $\mathbb{P}\bigl( \|\zeta\|^2 > n \epsilon/24\bigr) \leq 1/n$ by~\eqref{eq:chi2concentration}.
\end{proof}

\subsection{Proof of Theorem~\ref{prop:minmax} (excess risk of $\hat{\Phi}_{d_n}$)}
\label{s:pminmax}

In all the sequel we fix $f,g \in \Hs(R)$ and show that
\begin{equation}
\mathcal{R}_{f,g}(\hat\Phi_{d_n}) - \inf_{\Phi} \mathcal{R}_{f,g}(\Phi)  \leq \left\{\begin{array}{ll}
	c \, R^{\frac{1}{2s+1}} n^{-\frac{s}{2s+1}} \log(n) & \textrm{if } \Delta < R^{\frac{1}{2s+1}} n^{-\frac{s}{2s+1}} \log(n) \\[0.2em]
	\displaystyle \frac{c}{\Delta} R^{\frac{2}{2s+1}} n^{-\frac{2s}{2s+1}} \log^2(n) & \textrm{if } \Delta \geq R^{\frac{1}{2s+1}} n^{-\frac{s}{2s+1}} \log(n)
\end{array}\right.
\label{eq:thm1-upperboundfg}
\end{equation}
where $\Delta := \|f-g\|$. This immediately entails the inequality of the theorem (\ie, the one involving the supremum) since the right-hand side of~\eqref{eq:thm1-upperboundfg} is non-increasing in $\Delta$.

\ \\
Recall that $\Phi = \mathds{1}_{\eta \geq 1/2}$ is the Bayes (optimal) classifier and that $\Phi_{d_n}^\star$ is the Bayes classifier in the $d_n$-dimensional truncated space (see Remark~\ref{rmk:truncatedSpace} in Section~\ref{sec:estimationerror}). We decompose the excess risk into estimation and approximation errors and use Lemmas~\ref{lem:proj} and~\ref{prop:CO}: for some values of $\epsilon_1$ and $\epsilon_2$ to be determined later,
\begin{align}
& \mathcal{R}_{f,g}(\hat\Phi_{d_n}) - \inf_{\Phi} \mathcal{R}_{f,g}(\Phi) \nonumber \\
& \qquad = \mathcal{R}_{f,g}(\hat\Phi_{d_n}) - \mathcal{R}_{f,g}(\Phi_{d_n}^\star) + \mathcal{R}_{f,g}(\Phi_{d_n}^\star) - \mathcal{R}_{f,g}(\Phi^\star) \nonumber \\
& \qquad \leq 2 \epsilon_1 \left( 1\wedge \frac{10 \epsilon_1}{\Delta_{d_n}}  \right) + 6 \exp \left(  - \frac{n \epsilon_1^2}{4608 \, d_n \log n } \right) + \frac{13}{n} + 12 \epsilon_2^2 + 2 \epsilon_2 \left( 1 \wedge \frac{10\epsilon_2}{\Delta} \right) \nonumber \\
& \qquad \leq 2 \epsilon_1 \left( 1\wedge \frac{10 \epsilon_1}{\Delta_{d_n}}  \right) + \frac{19}{n} + 12 \epsilon_2^2 + 2 \epsilon_2 \left( 1 \wedge \frac{10\epsilon_2}{\Delta} \right) \,, \label{eq:thm1-boundtooptimize}
\end{align}
where $\Delta_{d_n} := \|f_{d_n}-g_{d_n}\|$, and where we assumed that $\epsilon_1^2 \geq 4608 \, d_n \log^2(n) / n$ (to be checked below).

\ \\
In all the sequel the value of the constant $N_{s,R}$ may change from line to line. Our first constraint on~$N_{s,R}$ is that $N_{s,R} \geq 1/R^2$, so that $d_n := \lfloor (R^2 n)^{\frac{1}{2s+1}} \rfloor \geq 1$ for all $n \geq N_{s,R}$. The choice of $d_n$ also guarantees the bias--variance tradeoff $R d_n^{-s} \approx \sqrt{d_n/n} \approx R^{\frac{1}{2s+1}} n^{-\frac{s}{2s+1}}$. More precisely, provided $N_{s,R}$ is chosen large enough, we get for all $n \geq N_{s,R}$ that
\begin{equation}
\sqrt{\frac{d_n}{n}} \leq R^{\frac{1}{2s+1}} n^{-\frac{s}{2s+1}} \leq R d_n^{-s} \leq 2 R^{\frac{1}{2s+1}} n^{-\frac{s}{2s+1}} \,.
\label{eq:biasvariancelink}
\end{equation}

\ \\
We now choose $\epsilon_1$ and $\epsilon_2$ so as to minimize~\eqref{eq:thm1-boundtooptimize}, while meeting the assumptions of Lemmas~\ref{lem:proj} and~\ref{prop:CO}.
\begin{itemize}
	\item We choose
	\[
	\epsilon_1 := 48 \left(\Delta_{d_n} + 2 \sqrt{\frac{d_n\log(n)}{n}} + \sqrt{2 \log(n)} \, \right) \sqrt{\frac{d_n\log(n)}{n}} \;.
	\]
	This entails that $0 < \epsilon_1 \leq 1/8$ for all $n \geq N_{s,R}$ (provided $N_{s,R}$ is chosen large enough), that Assumption~\eqref{eq:lemma2conditioneps} of Lemma~\ref{lem:proj} holds true, and that the requirement $\epsilon_1^2 \geq 4608 \, d_n \log^2(n) / n$ above is met.
	\item We choose
	\[
	\epsilon_2 := 32 R d_n^{-s} \sqrt{\log \frac{1}{32 R d_n^{-s}}} \;.
	\]
	Choosing $N_{s,R}$ large enough, we can guarantee for all $n \geq N_{s,R}$ that $0 < \epsilon_2 \leq 1/8$, as well as $\log\bigl[1/(32 R d_n^{-s})\bigr] \geq 1$ so that $\epsilon_2 \geq 32 R d_n^{-s}$ and therefore $\epsilon_2 \geq 32 R d_n^{-s} \sqrt{\log(1/\epsilon_2)}$, \ie,
	\[
	R^2 d_n^{-2s} \leq \frac{\epsilon_2^2}{512 \log\bigl(1/\epsilon_2^2\bigr)} \,.
	\]
	Now, note that $\|f-f_{d_n}\|^2 \leq R^2 d_n^{-2s}$ for all $f \in \Hs(R)$ because
	\[
	\|f-f_{d_n}\|^2 = \sum_{k=d_n+1}^{+\infty} c_k(f)^2 \leq d_n^{-2s} \sum_{k=d_n+1}^{+\infty} c_k(f)^2 k^{2s} \leq R^2 d_n^{-2s} \,.
	\]
	Combining the above inequalities implies that Assumption~\eqref{eq:lemma1conditioneps} of Lemma~\ref{prop:CO} is met. 
\end{itemize}

\ \\
Before plugging the values of $\epsilon_1$ and $\epsilon_2$ into~\eqref{eq:thm1-boundtooptimize}, we compare $\Delta_{d_n}$ with $\Delta$:
\begin{align}
\Delta_{d_n} & := \|f_{d_n}-g_{d_n}\| \geq \|f-g\| - \|f-f_{d_n}\| - \|g-g_{d_n}\| \geq \Delta - 2 R d_n^{-s} \geq \frac{\Delta}{10} \label{eq:Deltan}
\end{align}
whenever $\Delta \geq (20/9) R d_n^{-s}$. By~\eqref{eq:biasvariancelink} a sufficient condition is that $\Delta \geq (40/9)R^{\frac{1}{2s+1}} n^{-\frac{s}{2s+1}}$ or even that $\Delta \geq R^{\frac{1}{2s+1}} n^{-\frac{s}{2s+1}} \log(n)$ (provided $N_{s,R} \geq e^{40/9} \approx 85.2$). This is the threshold value we use below, since it makes the righ-hand side of~\eqref{eq:thm1-upperboundfg} continuous in $\Delta$.

\ \\
\textbf{Case 1: $\Delta < R^{\frac{1}{2s+1}} n^{-\frac{s}{2s+1}} \log(n)$}.

\ \\
We substitute the values of $\epsilon_1$ and $\epsilon_2$ into~\eqref{eq:thm1-boundtooptimize} and discard the (relatively large) terms $10 \, \epsilon_1/\Delta_{d_n}$ and $10 \, \epsilon_2/\Delta$. We obtain, noting that $12 \epsilon_2^2 \leq 12 \epsilon_2/8 \leq 2 \epsilon_2$:
\begin{align}
& \mathcal{R}_{f,g}(\hat\Phi_{d_n}) - \inf_{\Phi} \mathcal{R}_{f,g}(\Phi) \leq 2 \epsilon_1 + \frac{19}{n} + 12 \epsilon_2^2 + 2 \epsilon_2 \leq 2 \epsilon_1 + 4 \epsilon_2 + \frac{19}{n} \nonumber \\
& \qquad \leq 96 \left(\Delta_{d_n} + 2 \sqrt{\frac{d_n\log(n)}{n}} + \sqrt{2 \log(n)} \, \right) \sqrt{\frac{d_n\log(n)}{n}} + 128 R d_n^{-s} \sqrt{\log \frac{1}{32 R d_n^{-s}}} + \frac{19}{n} \nonumber \\
& \qquad \leq 96 \left(2 R + 2 R^{\frac{1}{2s+1}} n^{-\frac{s}{2s+1}} \sqrt{\log(n)} + \sqrt{2 \log(n)} \, \right) R^{\frac{1}{2s+1}} n^{-\frac{s}{2s+1}} \sqrt{\log(n)} \nonumber \\
& \qquad \qquad + 256 R^{\frac{1}{2s+1}} n^{-\frac{s}{2s+1}} \sqrt{\log \frac{n^{\frac{s}{2s+1}}}{32 R^{\frac{1}{2s+1}}}} + \frac{19}{n} \nonumber \\
& \qquad \leq c_1 R^{\frac{1}{2s+1}} n^{-\frac{s}{2s+1}} \log(n) \,, \label{eq:thm1case1}
\end{align}
where the inequality before last follows from~\eqref{eq:biasvariancelink} and from $\Delta_{d_n} \leq \Delta \leq \|f\|+\|g\| \leq 2 R$ (since $f,g \in \Hs(R)$), and where~\eqref{eq:thm1case1} holds for all $n \geq N_{s,R}$ provided the absolute constant $c_1 > 0$ and the constant $N_{s,R}$ are chosen large enough.

\ \\
\textbf{Case 2: $\Delta \geq R^{\frac{1}{2s+1}} n^{-\frac{s}{2s+1}} \log(n)$}.

\ \\
Following similar calculations, but using now the (relatively small) terms $10 \, \epsilon_1/\Delta_{d_n}$ and $10 \, \epsilon_2/\Delta$, we can see from~\eqref{eq:thm1-boundtooptimize} and then~\eqref{eq:Deltan} that, for some absolute constants $c_2,c_3 > 0$,
\begin{align}
& \mathcal{R}_{f,g}(\hat\Phi_{d_n}) - \inf_{\Phi} \mathcal{R}_{f,g}(\Phi) \leq \frac{20 \epsilon_1^2}{\Delta_{d_n}}  + \frac{19}{n} + 12 \epsilon_2^2 + \frac{20\epsilon_2^2}{\Delta} \leq \frac{200 \epsilon_1^2}{\Delta} + \frac{20\epsilon_2^2}{\Delta} + 12 \epsilon_2^2 + \frac{19}{n} \nonumber \\
& \qquad \leq c_2 R^{\frac{2}{2s+1}} n^{-\frac{2s}{2s+1}} \left(\frac{\log^2(n)}{\Delta} + \frac{\log(n)}{\Delta} + \log(n) \right) + \frac{19}{n} \nonumber \\
& \qquad \leq \frac{c_3 R^{\frac{2}{2s+1}} n^{-\frac{2s}{2s+1}} \log^2(n)}{\Delta} \,, \label{eq:thm1case2}
\end{align}
where the last two inequalities hold true for all $n \geq N_{s,R}$ provided $N_{s,R}$ is chosen large enough (\eg, $\log^2(n)/\Delta \geq \log(n)$ when $n \geq e^{2R} \geq e^{\Delta}$).

\ \\
\textbf{Conclusion:} We derive \eqref{eq:thm1-upperboundfg} by combining~\eqref{eq:thm1case1} and~\eqref{eq:thm1case2} and by choosing $c:=\max\{c_1,c_3\}$. This concludes the proof of Theorem~\ref{prop:minmax}.

\section{Proof of the minimax lower bound (Theorem \ref{thm:minimaxlowerbound})}
\label{s:minimaxlowerbound}


This section contains the proof of our minimax lower bound (Theorem~\ref{thm:minimaxlowerbound}). We will pay a specific attention to the influence of the separation distance $\Delta = \|f-g\|$ on the misclassification rate. 
We directly start with the proof in Section~\ref{sec:LBproof-main} below. We will use several key technical ingredients gathered in Section~\ref{sec:proof-lowerbound-packing}.

\subsection{Proof of Theorem~\ref{thm:minimaxlowerbound}}
\label{sec:LBproof-main}

\noindent
\underline{First case}: $\Delta < R^{1/(2s+1)} \, n^{-s/(2s+1)}$. Note that
\[
\Bigl\{ (f,g) \in \Hs(R) \times \Hs(R): \, \|f-g\| \geq \Delta \Bigr\} \supseteq \Bigl\{ (f,g) \in \Hs(R) \times \Hs(R): \, \|f-g\| \geq R^{1/(2s+1)} \, n^{-s/(2s+1)} \Bigr\} \,.
\]
Therefore, taking the supremum over all such functions, we directly obtain a lower bound on the minimax excess risk by applying the lower bound $\bigl(c e^{-2 \Delta^2}/\Delta\bigr) R^{2/(2s+1)} n^{-2s/(2s+1)}$ of the second case below with $\Delta = R^{1/(2s+1)} \, n^{-s/(2s+1)}$. This yields the desired lower bound of $c e^{-2 R^{2/(2s+1)}} R^{1/(2s+1)} n^{-s/(2s+1)}$. \\

\noindent
\underline{Second case}: $\Delta \geq R^{1/(2s+1)} \, n^{-s/(2s+1)}$. We proceed in three main steps.

\paragraphnospace{Step 1: reduction to a finite-dimensional $\mathbb{L}^1$-estimation problem, and some notation.}

\noindent
\textit{Finite-dimensional construction.}
Let $\hat{\Phi}$ be any classifier built from the sample $(X_i,Y_i)_{1 \leq i \leq n}$. As is usual when deriving nonparametric lower bounds, we restrict the supremum over all $f,g \in \cH_s(R)$ to a well-chosen finite-dimensional subset. More precisely, in what follows, we restrict our attention to functions $f:[0,1] \to \R$ and $g:[0,1] \to \R$ of the form:
\[
\forall  t \in \mathbb{R}, \qquad f(t)=f_{\theta}(t) \eqdef \sum_{j=1}^d \theta_j \varphi_j(t) \,, \quad \theta \in \Theta\,, \qquad \textrm{and} \qquad g(t) = 0 \,,
\]
for some $d \in \N^*$ and some parameter set $\Theta \subseteq \bigl\{\theta \in \R^d: \, \theta_1 = \Delta \textrm{ and } \sum_{j=2}^d \theta_j^2 j^{2s} \leq R^2 - \Delta^2\bigr\}$ to be made more precise in Step~2 below. Note that $\langle f_{\theta},\varphi_j\rangle=\theta_j$, so that the notation $\theta_j$ is consistent with that of Section~\ref{sec:pluginFinitedim}. \\

\noindent
\textit{Some notation.} 
The notation we choose for this proof differs slightly from that of the rest of the paper. We write $\PR_{\theta}$ for the joint distribution of the training and test samples $\bigl((X_i,Y_i)_{1 \leq i \leq n},(X,Y)\bigr)$ when the true parameter is $\theta$, and denote by $\E_{\theta}$ the corresponding expectation. We also denote by $Q_{\theta}$ the distribution of the process $(Z_t)_{0 \leq t \leq 1}$ defined by $d Z_t = f_{\theta}(t) d t + d W(t)$. We define the $\mathbb{L}^1$-norm of $h$ by
$$\|h\|_{L^1(Q_0)} \eqdef \int |h(x)| \dd Q_0(x) = \E\bigl[|h(W)|\bigr].$$
Finally, for $X=(X(t))_{0 \leq t \leq 1}$ solution of \eqref{eq:model}, we set
\[
\tilde{X}_j \eqdef \langle \varphi_j,X \rangle = \int_0^1 \varphi_j(t) d X(t) \;.
\]
Note that when $X$ is a standard Brownian motion on $[0,1]$, then $(\tilde{X}_j)_{j \geq 1}$, are independent standard Gaussian random variables (since $(\varphi_j)_{j \geq 1}$ is an orthonormal basis).

\ \\
\textit{Reduction to an $\mathbb{L}^1$-estimation problem.} Note that $g = 0 \in \Hs(R)$ and $\{f_{\theta}: \theta \in \Theta\} \subseteq \Hs(R)$ (see the definition in \eqref{def:Hs}), and that $\|f_{\theta}-0\| = \|\theta\| \geq \Delta$ for all $\theta \in \Theta$ (we use the notation $\|.\|$ both in $\mathbb{L}^2([0,1])$ and in $\R^d$). Therefore,
\begin{align}
\sup_{\substack{f,g \in \Hs(R) \\ \|f-g\| \geq \Delta}} \left\{ \mathcal{R}_{f,g}(\hat\Phi) - \inf_{\Phi} \mathcal{R}_{f,g}(\Phi)  \right\} & \geq \sup_{\theta \in \Theta} \left\{ \mathcal{R}_{f_{\theta},0}(\hat\Phi) - \inf_{\Phi} \mathcal{R}_{f_{\theta},0}(\Phi)  \right\}  \nonumber \\
& =  \sup_{\theta \in \Theta} \E_{\theta}\!\left[ \big|2 \eta_{\theta} (X) - 1 \big | \indicator{\hat{\Phi}(X) \neq \Phi_{\theta}(X)} \right], \label{eq:minimaxLB-Bayesformula}
\end{align}
where $\eta_{\theta}(x) = \PR_{\theta}(Y=1|X=x)$ denotes the regression function corresponding to the statistical model~\eqref{eq:model} with $f = f_{\theta}$ and $g=0$, and where $\Phi_{\theta}(x) = \indicator{\eta_{\theta}(x) \geq 1/2}$ is the associated Bayes classifier.

But, for all $\theta \in \Theta$ and any $\delta \in (0,1/4)$ (to be chosen later), we have 
\begin{align}
\E_{\theta}\!\left[ \big|2 \eta_{\theta} (X) - 1 \big | \indicator{\hat{\Phi}(X) \neq \Phi_{\theta}(X)} \right] & \geq \delta \, \PR_{\theta}\!\left( \bigl\{|2 \eta_{\theta} (X) - 1| \geq \delta\bigr\} \cap \bigl\{\hat{\Phi}(X) \neq \Phi_{\theta}(X)\bigr\} \right) \nonumber \\
& \geq \delta \left( \PR_{\theta}\bigl(\hat{\Phi}(X) \neq \Phi_{\theta}(X) \bigr) - \PR_{\theta}\bigl(|2 \eta_{\theta} (X) - 1| < \delta\bigr) \right) \nonumber \\
& \geq \delta \left( \PR_{\theta}\bigl(\hat{\Phi}(X) \neq \Phi_{\theta}(X) \bigr) - \frac{5 \delta}{\Delta} \right), \label{eq:minimaxLB-1}
\end{align}
where the last inequality follows from Proposition~\ref{thm:margin}.
Next, we use a conditional argument to handle the probability above given the training sample $(X_i,Y_i)_{1 \leq i \leq n}$: the process $X=(X(t))_{0 \leq t \leq 1}$ defined in \eqref{eq:model} is independent from the training sample and has distribution $(Q_0+Q_{\theta})/2$ under $\PR_{\theta}$ (recall that $Q_{\theta}$ denotes the distribution of the process $(Z_t)_{0 \leq t \leq 1}$ defined by $d Z_t = f_{\theta}(t) d t + d W(t)$). Therefore, for all $\theta \in \Theta$,
\begin{align}
\PR_{\theta}\Bigl( \hat{\Phi}(X) \neq \Phi_{\theta}(X) \Bigr) & = \E_{\theta} \biggl\{ \PR_{\theta}\Bigl( \hat{\Phi}(X) \neq \Phi_{\theta}(X) \, \Big| \, (X_i,Y_i)_{1 \leq i \leq n} \Bigr) \biggr\} \nonumber \\
& = \E_{\theta} \biggl\{ \int \indicator{\hat{\Phi}(x) \neq \Phi_{\theta}(x)} \frac{\dd Q_0(x) + \dd Q_{\theta}(x)}{2} \biggr\} \nonumber \\
& \geq \frac{1}{2} \, \E_{\theta}\Bigl[ \big\|\hat{\Phi}-\Phi_{\theta}\big\|_{L^1(Q_0)} \Bigr] \,, \label{eq:minimaxLB-2}
\end{align}
where the last inequality follows from the fact that $\indicator{\hat{\Phi}(x) \neq \Phi_{\theta}(x)} = \big| \hat{\Phi}(x) - \Phi_{\theta}(x) \big|$ for all continuous functions $x:[0,1]\to~\R$. Putting~\eqref{eq:minimaxLB-Bayesformula}, \eqref{eq:minimaxLB-1}, and~\eqref{eq:minimaxLB-2} together, we finally get
\begin{equation}
\sup_{\substack{f,g \in \Hs(R) \\ \|f-g\| \geq \Delta}} \left\{ \mathcal{R}_{f,g}(\hat\Phi) - \inf_{\Phi} \mathcal{R}_{f,g}(\Phi)  \right\} \geq \frac{\delta}{2} \, \left( \sup_{\theta \in \Theta}  \E_{\theta}\Bigl[ \big\|\hat{\Phi}-\Phi_{\theta}\big\|_{L^1(Q_0)} \Bigr] - \frac{10 \delta}{\Delta} \right) \,.
\label{eq:minimaxLB-3}
\end{equation}

\paragraphnospace{Step 2: a key combinatorial and geometrical argument}
In order to further bound \eqref{eq:minimaxLB-3} from below, we now specialize $\Theta$ to the set given by Lemma~\ref{lem:lowerbound-packing} in Appendix~\ref{sec:proof-lowerbound-packing}, whose proof combines Varshamov-Gilbert's lemma with simple but key geometrical arguments in dimension two. More precisely, we use Lemma~\ref{lem:lowerbound-packing} in Appendix~\ref{sec:proof-lowerbound-packing} with $\epsilon=c/\sqrt{n}$ and $d = \big\lfloor \bigl((R^2-\Delta^2) \, n\bigr)^{1/(2s+1)} \big\rfloor$, for some absolute constant $c \in (0,1]$ to be determined later. Two remarks are in order:
\begin{itemize}
	\item We have $d \geq \left((R^2-\Delta^2) n \right)^{1/(2s+1)}-1 \geq 
	32 \log(2) + 1$ by the assumption $n \geq (32 \log(2) + 2)^{2s+1}/(3 R^2/4) \geq (32 \log(2)+2)^{2s+1}/(R^2 - \Delta^2)$ since $\Delta \leq R/2$. In particular the condition $d \geq 7$ in Lemma~\ref{lem:lowerbound-packing} holds true.
	\item The condition $\Delta \geq \sqrt{d} \, \epsilon$ of Lemma~\ref{lem:lowerbound-packing} holds since by assumption on $\Delta$, we have $$\Delta \geq R^{1/(2s+1)}\, n^{-s/(2s+1)} = \sqrt{(R^2 \, n)^{1/(2s+1)}/n} \geq \sqrt{d/n} \geq \sqrt{d} \, \epsilon \,,$$ by definition of $d$ and $\epsilon$.
\end{itemize}

\noindent
We can thus apply Lemma~\ref{lem:lowerbound-packing}  and find a subset $\Theta \subseteq \{\Delta\} \times \{-\epsilon, \, \epsilon \}^{d-1} \subseteq \R^d$ of cardinality $|\Theta| \geq e^{(d-1)/8} \geq 2$ such that, for all $ \theta \neq \theta' \in \Theta$,
\begin{equation}
\label{eq:lowerbound-packingbis}
\big\|\Phi_{\theta}-\Phi_{\theta'}\big\|_{L^1(Q_0)} \geq \frac{\sqrt{d-1} \, \epsilon}{4 \pi \Delta} \, e^{-\Delta^2}\,.
\end{equation}

\ \\
Note that our construction of $\Theta$ meets our earlier requirement: for all $\theta \in \Theta$, we have $\sum_{j=2}^d  \theta_j^2 j^{2s} \leq (d-1)\epsilon^2 \, d^{2s} \leq d^{2s+1} \epsilon^2 \leq R^2 -\Delta^2$ by definition of $d \leq \bigl((R^2-\Delta^2) \, n\bigr)^{1/(2s+1)}$ and $\epsilon \leq 1/\sqrt{n}$. Therefore, $\Theta \subseteq \bigl\{ \theta \in \R^d: \,  \theta_1 = \Delta \textrm{ and } \sum_{j=2}^d  \theta_j^2 j^{2s} \leq R^2 - \Delta^2\bigr\}$ as assumed at the beginning of this proof.

\paragraphnospace{Step 3: Reduction to a testing problem with finitely-many hypotheses} We now use a classical tool in nonparametric statistics since we reduce the problem to a multiple-hypotheses testing problem. More precisely, using~\eqref{eq:minimaxLB-3} and setting 
$$\hat{\theta} \in \argmin_{\theta \in \Theta} \big\|\hat{\Phi}-\Phi_{\theta}\big\|_{L^1(Q_0)}\,,$$
we can see that
\begin{align}
\sup_{\substack{f,g \in \Hs(R) \\ \|f-g\| \geq \Delta}} \left\{ \mathcal{R}_{f,g}(\hat\Phi) - \inf_{\Phi} \mathcal{R}_{f,g}(\Phi)  \right\} & \geq \frac{\delta}{2} \left( \sup_{\theta \in \Theta}  \E_{\theta}\Bigl[ \indicator{\{\hat{\theta} \neq \theta\}} \big\|\hat{\Phi}-\Phi_{\theta}\big\|_{L^1(Q_{\mu})} \Bigr] - \frac{10 \delta}{\Delta} \right) \nonumber \\
& \geq \frac{\delta}{2} \left( \frac{\sqrt{d-1} \, \epsilon}{8 \pi \Delta} \, e^{-\Delta^2} \sup_{\theta \in \Theta} \PR_{\theta}\Bigl( \hat{\theta} \neq \theta \Bigr) - \frac{10 \delta}{\Delta} \right) \,, \label{eq:minimaxLB-4}
\end{align}
where in the last inequality we used the fact that, on the event $\{\hat{\theta} \neq \theta\}$, we necessarily have
\[
\big\|\hat{\Phi}-\Phi_{\theta}\big\|_{L^1(Q_0)} \geq \frac{\sqrt{d-1} \, \epsilon}{8 \pi \Delta} \, e^{-\Delta^2}
\]
by a combination of Inequality~\eqref{eq:lowerbound-packingbis}, the definition of $\hat{\theta}$, and the triangle inequality.

\ \\
We now lower bound the worst-case testing error $\sup_{\theta \in \Theta} \PR_{\theta}\bigl( \hat{\theta} \neq \theta \bigr)$. Since $\hat{\theta}$ only depends on the training sample $(X_i,Y_i)_{1 \leq i \leq n}$, whose distribution we denote by $P_{\theta}$, we can write $\PR_{\theta}\bigl( \hat{\theta} \neq \theta \bigr) = P_{\theta}\bigl( \hat{\theta} \neq \theta \bigr)$. We can thus use Fano's inequality (cf.~Lemma~\ref{lem:Fano} in Appendix~\ref{s:C}) with the events $A_{\theta} = \bigl\{\hat{\theta} = \theta\bigr\}$, the distributions $P_{\theta}$, $\theta \in \Theta$, and the reference distribution $\Q=P_{\theta_0}$, where $\theta_0 \eqdef (\Delta,0,\ldots,0) \in \R^d$. We obtain:
\begin{equation}
\inf_{\theta \in \Theta} P_{\theta}\bigl( \hat{\theta} = \theta \bigr) \leq \frac{1}{|\Theta|} \sum_{\theta \in \Theta} P_{\theta}\bigl( \hat{\theta} = \theta \bigr) \leq \frac{\displaystyle \frac{1}{|\Theta|} \sum_{\theta \in \Theta} \KL\bigl(P_{\theta},P_{\theta_0}\bigr) + \log 2}{\log |\Theta|} \;.
\label{eq:FanoCsq}
\end{equation}
Using the chain rule for the Kullback-Leibler divergence, and following similar computations as in Section~\ref{s:model} (application of Girsanov's formula), we can see that, for all $\theta \in \Theta$,
\begin{align*}
\KL\bigl(P_{\theta},P_{\theta_0}\bigr) & = n \left(\KL\bigl(\Ber(1/2),\Ber(1/2)\bigr) + \frac{\KL(Q_{\theta},Q_{\theta_0}) + \KL(Q_0,Q_0)}{2} \right) = \frac{n \|\theta - \theta_0\|^2}{4} = \frac{n (d-1) \epsilon^2}{4} \,,
\end{align*}
where we used the fact that $\theta \in \Theta \subseteq \{\Delta\} \times \{-\epsilon, \, \epsilon \}^{d-1}$ and $\theta_0 \eqdef (\Delta,0,\ldots,0)$. Combining \eqref{eq:FanoCsq} with the Kullback-Leibler upper bound above, and recalling that $|\Theta| \geq e^{(d-1)/8}$, we get
\[
\inf_{\theta \in \Theta} P_{\theta}\bigl( \hat{\theta} = \theta \bigr) \leq \frac{n (d-1) \epsilon^2/4 + \log 2}{(d-1)/8}  \leq 2 c^2 + \frac{1}{4} \,,
\]
where the last inequality follows from $\epsilon = c/\sqrt{n}$ and $d \geq 32 \log(2) + 1$. As a consequence, choosing $c \eqdef 1/(2 \sqrt{2})$,
\[
\sup_{\theta \in \Theta} P_{\theta}\bigl( \hat{\theta} \neq \theta \bigr) \geq 1 - 2 c^2 - \frac{1}{4} = \frac{1}{2} \;.
\]
Plugging the last lower bound into~\eqref{eq:minimaxLB-4}, we finally get
\[
\sup_{\substack{f,g \in \Hs(R) \\ \|f-g\| \geq \Delta}} \left\{ \mathcal{R}_{f,g}(\hat\Phi) - \inf_{\Phi} \mathcal{R}_{f,g}(\Phi)  \right\}
\geq \frac{5 \delta}{\Delta} \left( \frac{\sqrt{d-1} \, \epsilon}{160 \pi} \, e^{-\Delta^2} - \delta \right) = \frac{(d-1) \, \epsilon^2}{20480 \pi^2 \Delta} \, e^{-2\Delta^2}
\]
with the particular choice of $\delta = \sqrt{d-1} \, \epsilon \, e^{-\Delta^2} / (320 \pi)$. We conclude the proof by substituting the values of $\epsilon=c/\sqrt{n}$ and $d -1 = \big\lfloor \bigl((R^2-\Delta^2) \, n\bigr)^{1/(2s+1)} \big\rfloor -1 \geq (6/8)  \bigl((R^2-\Delta^2) \, n\bigr)^{1/(2s+1)}$ (since $\lfloor x \rfloor - 1 \geq 6 x/8$ for all $x \geq 7$) and by using the fact that $R^2-\Delta^2 \geq 3 R^2/4$ (since $\Delta \leq R/2$). Note also that, by the assumption $n \geq R^{1/s}$, we have $\delta < 1/4$ as required in the analysis. This concludes the proof of Theorem~\ref{thm:minimaxlowerbound}.

\subsection{A key combinatorial and geometrical lemma}
\label{sec:proof-lowerbound-packing}

In this section, we 
 provide a key combinatorial and geometrical lemma to derive the minimax lower bound of Theorem~\ref{thm:minimaxlowerbound}. Indeed, the next result guarantees the existence of a parameter set $\Theta \subset \R^d$ such that---when $\epsilon$ is chosen small enough---it is statistically hard to estimate the true value of the parameter $\theta \in \Theta$, while all Bayes classifiers $\Phi_{\theta}$ and $\Phi_{\theta'}$, $\theta \neq \theta' \in \Theta$, are sufficiently far from one another, thus leading to a large classification excess risk.

\begin{lem}
\label{lem:lowerbound-packing}
Let $d \geq 7$, $\epsilon > 0$, and  $\Delta \geq \sqrt{d} \, \epsilon$. There exists a subset $\Theta \subseteq \{\Delta\} \times \{-\epsilon, \, \epsilon \}^{d-1} \subseteq \R^d$ of cardinality $|\Theta| \geq e^{(d-1)/8} \geq 2$ such that, for all $\theta \neq \theta' \in \Theta$,
\begin{equation}
\label{eq:lowerbound-packing}
\big\|\Phi_{\theta}-\Phi_{\theta'}\big\|_{L^1(Q_0)} \geq \frac{\sqrt{d-1} \, \epsilon}{4 \pi \Delta} \, e^{-\Delta^2}\,,
\end{equation}
where $Q_0$ denotes the distribution of a standard Brownian motion $W=(W(t))_{0 \leq t \leq 1}$ on $[0,1]$, and where $\|h\|_{L^1(Q_0)} \eqdef \E\bigl[|h(W)|\bigr]$.
\end{lem}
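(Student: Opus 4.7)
The plan is to combine a Varshamov--Gilbert (VG) packing argument with a two-dimensional Gaussian geometric computation. First I would parameterize each $\theta \in \{\Delta\} \times \{-\epsilon, \epsilon\}^{d-1}$ via its sign vector $(s_2, \ldots, s_d) \in \{-1,+1\}^{d-1}$, and apply the Varshamov--Gilbert lemma to extract a packing $\Theta$ with $|\Theta| \geq e^{(d-1)/8}$ such that any two distinct $\theta \neq \theta' \in \Theta$ have Hamming distance $|H(\theta,\theta')| \geq (d-1)/8$ in their sign patterns; the assumption $d \geq 7$ ensures $|\Theta| \geq 2$.

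The heart of the argument is converting this Hamming separation into an $L^1(Q_0)$-separation of the Bayes classifiers $\Phi_\theta$. Since $g = 0$ and $\|\theta\|^2 = \|\theta'\|^2 = c := \Delta^2 + (d-1)\epsilon^2$ is common, one has $\Phi_\theta(X) = \indicator{\langle f_\theta, X\rangle \geq c/2}$, which only depends on $X$ through the one-dimensional projection $\langle f_\theta, X\rangle$. Under $Q_0$ (Wiener measure), the pair $\bigl(\langle f_\theta, W\rangle, \langle f_{\theta'}, W\rangle\bigr)/\sqrt{c}$ is jointly standard Gaussian in $\R^2$ with correlation $\rho = \langle \theta,\theta'\rangle/c = 1 - 2|H|\epsilon^2/c$, so that $\|\Phi_\theta - \Phi_{\theta'}\|_{L^1(Q_0)}$ equals the standard Gaussian measure of the symmetric difference of two half-planes, both at distance $\sqrt{c}/2$ from the origin with normals making angle $\arccos \rho$. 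After an orthonormal change of variables to iid standard Gaussians $(U, V)$, this symmetric difference becomes the strip-like region $\{|U - M| \leq k|V|\}$ with $M = \sqrt{c/(2(1+\rho))}$ and $k = \sqrt{(1-\rho)/(1+\rho)}$.

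I would then evaluate this probability in polar coordinates centered at $(M, 0)$: after factoring out the mass $e^{-M^2/2}$, the event reduces to the angular constraint $|\tan \psi| > 1/k$, and pairing antipodal angles $\psi \leftrightarrow \psi + \pi$ (to exploit the bound $I(\mu) + I(-\mu) \geq 2$ for $I(\mu) := \int_0^\infty r \, e^{-r^2/2 - \mu r}\,dr$) yields the clean estimate
\[ \|\Phi_{\theta}-\Phi_{\theta'}\|_{L^1(Q_0)} \geq \frac{2 \arctan(k)}{\pi}\,e^{-M^2/2}\,. \]
To conclude, elementary bounds give $k \geq \sqrt{d-1}\,\epsilon/(4\Delta)$ (from $|H| \geq (d-1)/8$ and $c \leq 2\Delta^2$, the latter using $\Delta \geq \sqrt{d}\,\epsilon$) and $M^2 \leq \Delta^2$ (from $c \leq 2\Delta^2$ and $c - |H|\epsilon^2 \geq \Delta^2$, the latter because $|H| \leq d-1$). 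Combining with $\arctan(k) \geq k/2$ when $k \leq 1$ delivers the target lower bound; the remaining case $k \geq 1$ is absorbed via $\arctan(k) \geq \pi/4$ and the trivial bound $\sqrt{d-1}\,\epsilon \leq \Delta$. The main obstacle I anticipate is the polar-coordinate computation itself: the antipodal-pairing trick---which handles the nonzero mean shift and extracts both the $\arctan(k)$ factor and the Gaussian tail $e^{-M^2/2}$ simultaneously---is the key nonobvious step, and controlling $M^2$ uniformly in $|H|$ requires the careful identity $M^2 = c^2/(4(c - |H|\epsilon^2))$ combined with the constraint $\Delta \geq \sqrt{d}\,\epsilon$.
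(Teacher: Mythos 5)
Your proposal is correct and follows the same overall strategy as the paper: a Varshamov--Gilbert packing of sign patterns, the observation that $\Phi_\theta$ is a linear classifier determined by a halfspace in the space of projected coefficients, a reduction to a two-dimensional Gaussian computation, and a bound on the angle between classifiers in terms of the Hamming separation. Where you genuinely depart from the paper is in the two-dimensional Gaussian estimate. The paper isolates this step in a separate cone lemma (Lemma~\ref{lem:GaussianMassCone}), bounding the Gaussian mass of a double cone with apex $z$ and angle $\cA$ by $\frac{\cA}{2\pi}e^{-\|z\|^2}$; that bound is obtained by translating the cone to the origin and applying $\|z+u\|^2 \leq 2\|z\|^2 + 2\|u\|^2$, which loses a factor in the exponent. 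You instead integrate exactly in polar coordinates centered at the apex $(M,0)$, factor out $e^{-M^2/2}$, and handle the residual tilt $e^{-Mr\cos\psi}$ by the antipodal pairing $\psi \leftrightarrow \psi + \pi$, giving $I(\mu) + I(-\mu) \geq 2$. This yields $\gamma_2(\cC) \geq \frac{2\arctan(k)}{\pi}e^{-M^2/2}$, where $2\arctan(k) = \angle(\theta,\theta')$ (indeed $k = \tan(\angle/2)$), so you gain both a factor of two in the prefactor and, more importantly, the exponent $e^{-M^2/2}$ instead of $e^{-M^2}$. After the elementary bounds $k \geq \sqrt{d-1}\,\epsilon/(4\Delta)$ and $M^2 \leq \Delta^2$ this produces $e^{-\Delta^2/2}$ where the lemma only asks for $e^{-\Delta^2}$; the paper's own footnote flags exactly this place as an opportunity to sharpen the exponential constant, so your argument is a genuine improvement, not just a cosmetic variant. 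Two minor remarks: first, Varshamov--Gilbert (Lemma~\ref{lem:VG}) gives Hamming distance $> (d-1)/4$, whereas you write $\geq (d-1)/8$; this is a slight misquote, harmless here since the weaker bound still closes the estimate (indeed using the correct $(d-1)/4$ gives $k \geq \sqrt{d-1}\,\epsilon/(2\sqrt{2}\,\Delta)$, a further small gain). Second, you work with the pair $(\langle f_\theta, W\rangle, \langle f_{\theta'}, W\rangle)$ rather than with the orthogonal projection of $\tilde{W}$ onto the plane $\mathrm{span}(\theta,\theta')$ as the paper does; these are the same two-dimensional Gaussian after an orthonormal change of basis, and $\|z\| = M$ confirms that your apex coincides with theirs.
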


\ \\
The proof is provided in Section~\ref{sec:proofCombinatorialLemma} below. We first state three intermediary results.

\subsubsection{Intermediary results}

The following lemma shows that, for the $d$-dimensional construction of Section~\ref{sec:LBproof-main} (Step~1), the Bayes classifier $\Phi_{\theta}$ only depends on the $d$ random variables $\tilde{X}_j \eqdef \int_0^1 \varphi_j(t) d X(t)$, $1 \leq j \leq d$, and takes the form of a simple linear classifier in $\R^d$. We recall that $(\varphi_j)_{j \geq 1}$ is any Hilbert basis of $\mathbb{L}^2([0,1])$ and that $f_{\theta} = \sum_{j=1}^d \theta_j \varphi_j$.

\begin{lem}
\label{lem:BayesClassifierFinite}
Consider the statistical construction of Section~\ref{sec:LBproof-main} (Step~1). Let $W=(W(t))_{0 \leq t \leq 1}$ be a standard Brownian motion and define $\tilde{W}_j \eqdef \int_0^1 \varphi_j(t) d W(t)$ as well as $\tilde{W} \eqdef \bigl(\tilde{W}_j\bigr)_{1 \leq j \leq d} \in \R^d$. Then, the Bayes classifier $\Phi_{\theta} = \mathds{1}_{\eta_{_{\theta}} \geq 1/2}$ satisfies
\[
\Phi_{\theta}(W) = \left\lbrace
\begin{array}{lcr}
0 & \mathrm{if} &  \| \tilde{W} - \theta \| > \| \tilde{W} \|  \\
1 & \mathrm{if} & \| \tilde{W} - \theta \| \leq \| \tilde{W} \| 
\end{array}
\right. \quad \textrm{almost surely}.
\]
\end{lem}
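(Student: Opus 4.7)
The plan is to reduce the Bayes classifier of the infinite-dimensional problem to the finite-dimensional decision rule by expressing everything in terms of the coordinates $\tilde{W}_j=\int_0^1\varphi_j(t)dW(t)$, and then to recognize the resulting half-space as the nearest-centroid rule of the statement.

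First I would invoke the general formula~\eqref{eq:bayes_rule} for the Bayes classifier, specialized to our construction where $g\equiv 0$ and $f=f_\theta=\sum_{j=1}^d \theta_j \varphi_j$. This gives, almost surely,
\[
\Phi_\theta(W)=\mathds{1}_{\bigl\{\int_0^1 f_\theta(s)dW_s\;\ge\;\tfrac{1}{2}\|f_\theta\|^2\bigr\}}.
\]
Next I would expand both sides in the basis $(\varphi_j)$. Since $f_\theta$ lies in $\mathrm{Span}(\varphi_1,\dots,\varphi_d)$, linearity of the stochastic integral w.r.t.\ deterministic integrands gives
\[
\int_0^1 f_\theta(s)\,dW_s \;=\; \sum_{j=1}^d \theta_j \tilde{W}_j \;=\; \langle \theta,\tilde{W}\rangle,
\]
while Parseval's identity (recall that $(\varphi_j)_{j\ge 1}$ is an orthonormal basis of $\mathbb{L}^2([0,1])$) yields $\|f_\theta\|^2=\sum_{j=1}^d \theta_j^2=\|\theta\|^2$. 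Therefore the Bayes classifier rewrites as $\Phi_\theta(W)=\mathds{1}_{\{\langle\theta,\tilde{W}\rangle\ge \frac{1}{2}\|\theta\|^2\}}$.

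Finally I would use the elementary algebraic identity
\[
\|\tilde{W}-\theta\|^2-\|\tilde{W}\|^2 \;=\; \|\theta\|^2 - 2\langle\theta,\tilde{W}\rangle,
\]
from which the event $\{\|\tilde{W}-\theta\|\le\|\tilde{W}\|\}$ is equivalent to $\{\langle\theta,\tilde{W}\rangle\ge \frac{1}{2}\|\theta\|^2\}$. Combining the two equivalences, $\Phi_\theta(W)=1$ almost surely on $\{\|\tilde{W}-\theta\|\le\|\tilde{W}\|\}$ and $\Phi_\theta(W)=0$ almost surely on $\{\|\tilde{W}-\theta\|>\|\tilde{W}\|\}$, which is exactly the announced formula.

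There is no real obstacle here: the argument is essentially a one-line computation once the stochastic integral is expanded in the orthonormal basis. The only point deserving a brief justification is that the stochastic integral $\int_0^1 f_\theta(s)dW_s$ distributes over the finite basis expansion almost surely, which is immediate since $f_\theta$ is deterministic and the sum is finite.
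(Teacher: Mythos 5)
Your argument is correct and follows essentially the same route as the paper's own proof: specialize the Bayes formula (derived via Girsanov) to $g=0$ and $f=f_\theta$, expand the stochastic integral in the basis to get $\langle\theta,\tilde W\rangle$, use Parseval to get $\|f_\theta\|^2=\|\theta\|^2$, and finish with the elementary identity $\|\tilde W-\theta\|^2-\|\tilde W\|^2=\|\theta\|^2-2\langle\theta,\tilde W\rangle$. The only cosmetic difference is that you spell out the intermediate computations which the paper compresses into a chain of equivalences.
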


\begin{proof}
The result follows directly from the calculations of Section~\ref{sec:BayesRule} (application of Girsanov's formula). Indeed, using \eqref{eq:eta} and the fact that $g=0$ and $\|f_{\theta}\|=\|\theta\|$, we obtain
\begin{align*}
\eta_{\theta}(W) \geq 1/2 & \iff \int_0^1 f_{\theta}(t) d W(t)  \geq \frac{\|f_{\theta}\|^2}{2}  \\
& \iff \tilde{\theta} \cdot \tilde{W} \geq \frac{\|\theta\|^2}{2} \\
& \iff \| \tilde{W} - \theta \|^2 \leq \| \tilde{W} \|^2 \;,
\end{align*}
which concludes the proof.
\end{proof}

The above lemma shows that the Bayes classifier $\Phi_{\theta}$ corresponds to a linear classifier in $\R^d$ (after projecting onto $(\varphi_j)_{1 \leq j \leq d}$). The next lemma provides a lower bound on the angle between the hyperplanes associated with two linear classifiers $\Phi_{\theta}$ and $\Phi_{\theta'}$, for $\theta \neq \theta' \in \Theta$. This result will be crucial in our proof of the lower bound of Lemma~\ref{lem:lowerbound-packing}.

We recall that the (undirected) internal angle  between two non-zero vectors $\theta,\theta' \in \R^d$ is given by
\[
\angle(\theta, \, \theta') := \arccos\left(\frac{\langle \theta, \theta' \rangle }{\|\theta\| \, \|\theta'\|}\right)\in [0,\pi] \;;
\]
this angle is in particular well defined for all $\theta,\theta' \in \Theta$ (since $0 \notin \Theta$ by construction).

\begin{lem}
\label{lem:BayesAngle}
Let $d \geq 7$, $\epsilon > 0$, and  $\Delta \geq \sqrt{d} \, \epsilon$. Let $\Gamma \subseteq \{-1,1\}^{d-1}$ be a set provided by Varshamov-Gilbert's lemma in dimension $m=d-1$ (see, \eg, Lemma~\ref{lem:VG} in Appendix~\ref{s:C}), and define
\begin{equation}
\Theta \eqdef \bigl\{\Delta\bigr\} \times \bigl(\epsilon \Gamma\bigr) = \Bigl\{ (\Delta,\epsilon u_1, \epsilon u_2, \ldots, \epsilon u_{d-1}) : \, (u_1,\ldots,u_{d-1}) \in \Gamma \Bigr\} \subset \R^d \,.
\label{eq:def-Theta}
\end{equation}
Then, for all $\theta \neq \theta' \in \Theta$, the internal angle $\angle(\theta, \, \theta')$ between the vectors $\theta$ and $\theta'$ is bounded by
\[
\frac{\sqrt{d-1} \, \epsilon}{2\Delta} \leq \angle(\theta, \, \theta') \leq \frac{\pi}{2} \;.
\]
\end{lem}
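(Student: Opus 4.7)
My plan is to reduce the geometric claim to an explicit computation of $\cos \angle(\theta,\theta')$ and then apply a standard $\arccos$ inequality.

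\textbf{Step 1: Explicit cosine.} For $\theta=(\Delta,\epsilon u_1,\ldots,\epsilon u_{d-1})$ and $\theta'=(\Delta,\epsilon v_1,\ldots,\epsilon v_{d-1})$ with $u,v\in\Gamma\subseteq\{-1,1\}^{d-1}$, I would compute
\[
\|\theta\|^2=\|\theta'\|^2=\Delta^2+(d-1)\epsilon^2, \qquad \langle\theta,\theta'\rangle=\Delta^2+\epsilon^2\sum_{j=1}^{d-1}u_jv_j,
\]
so that $\cos \angle(\theta,\theta')=\bigl(\Delta^2+\epsilon^2\sum u_jv_j\bigr)/\bigl(\Delta^2+(d-1)\epsilon^2\bigr)$.

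\textbf{Step 2: Varshamov--Gilbert.} From Lemma~\ref{lem:VG}, any two distinct $u,v\in\Gamma$ satisfy $d_H(u,v)\geq (d-1)/4$, hence $\sum_{j=1}^{d-1}u_jv_j=(d-1)-2d_H(u,v)\leq (d-1)/2$. Using the trivial lower bound $\sum u_jv_j\geq -(d-1)$ and the hypothesis $\Delta^2\geq d\epsilon^2\geq (d-1)\epsilon^2$ also gives $\cos\angle(\theta,\theta')\geq 0$, which yields $\angle(\theta,\theta')\leq \pi/2$ immediately.

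\textbf{Step 3: Lower bound via $\arccos$.} Setting $a=\Delta^2$ and $b=(d-1)\epsilon^2$, Step 2 yields
\[
\cos\angle(\theta,\theta')\leq \frac{a+b/2}{a+b}=1-\frac{b/2}{a+b}.
\]
Using the elementary inequality $\arccos(1-t)\geq \sqrt{2t}$ valid for $t\in[0,1]$ (obtained from $\cos y\geq 1-y^2/2$), it follows that
\[
\angle(\theta,\theta')\geq \sqrt{\frac{b}{a+b}}.
\]
Finally, the hypothesis $\Delta\geq \sqrt{d}\,\epsilon$ implies $3a\geq b$, hence $a+b\leq 4a$, so $\sqrt{b/(a+b)}\geq \sqrt{b}/(2\sqrt{a})=\sqrt{d-1}\,\epsilon/(2\Delta)$, which is the claimed lower bound.

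\textbf{Main obstacle.} There is no substantial obstacle; the computation is essentially algebraic once one recognizes that only the cosine has to be bounded (both upper and lower bounds on the angle translate into a single two-sided bound on $\cos$). The only place care is needed is in verifying that the assumption $\Delta\geq\sqrt{d}\,\epsilon$ is exactly what one needs in order to (i) guarantee non-negativity of the cosine and (ii) convert the bound $\sqrt{b/(a+b)}$ into the cleaner expression $\sqrt{d-1}\,\epsilon/(2\Delta)$ stated in the lemma.
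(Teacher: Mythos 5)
Your proof is correct and follows essentially the same route as the paper: compute $\cos\angle(\theta,\theta')$ explicitly, invoke Varshamov--Gilbert to bound $\sum_j u_j v_j$ from above, use $\Delta \geq \sqrt{d}\,\epsilon$ to get nonnegativity of the cosine (hence $\angle \leq \pi/2$), and then convert the upper bound on the cosine into a lower bound on the angle via a quadratic Taylor-type estimate. The only cosmetic difference is in the final step, where you apply $\arccos(1-t)\geq\sqrt{2t}$ directly (from $\cos y \geq 1 - y^2/2$) while the paper uses $\angle \geq \sin\angle$ together with $\sin^2 = 1-\cos^2 \geq 1-\cos$; both yield the same target bound.
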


\begin{proof} 
Let  $\theta \neq \theta' \in \Theta$. By~\eqref{eq:def-Theta} we can write $\theta  = (\Delta,\epsilon u_1, \ldots, \epsilon u_{d-1})$ and $\theta'  = (\Delta,\epsilon u'_1, \ldots, \epsilon u'_{d-1})$ with $u \neq u' \in \Gamma$. We also set $m=d-1$. We have
\begin{align}
\cos \Big(\angle(\theta, \, \theta')\Bigr)  & = \frac{\langle \theta , \theta' \rangle }{\|\theta\| \, \|\theta'\|} = \frac{\Delta^2 + \epsilon^2 \sum_{j=1}^m u_j u'_j}{\sqrt{\Delta^2 + m \epsilon^2} \, \sqrt{\Delta^2 + m \epsilon^2}} = \frac{\Delta^2 + \epsilon^2 \sum_{j=1}^m u_j u'_j}{\Delta^2 + m \epsilon^2} \;. \label{eq:angleLB-1}
\end{align}
Note that $u_j u'_j \in \{-1,1\}$ so that $\Delta^2 + \epsilon^2 \sum_{j=1}^m u_j u'_j \geq \Delta^2 - m \epsilon^2 \geq 0$ because we assumed that $\Delta \geq \sqrt{d} \, \epsilon$. Therefore, $\cos\big(\angle(\theta, \, \theta')\bigr) \geq 0$, which in turn entails that $\angle(\theta, \, \theta') \leq \pi/2$ since $\angle(\theta, \, \theta') \in [0,\pi]$ by definition.

\ \\
We now prove the lower bound on $\angle(\theta, \, \theta')$. By construction of $\Gamma$ (Lemma~\ref{lem:VG} in Appendix~\ref{s:C}), we have $u_j u'_j \in \{-1,1\}$ and $\sum_{j=1}^m \indicator{\{u_j \neq u'_j\}} \geq m/4$, so that $\sum_{j=1}^m u_j u'_j \leq -m/4 + 3m/4 = m/2$. Substituting this upper bound in~\eqref{eq:angleLB-1} yields
\[
\cos \Big(\angle(\theta, \, \theta')\Bigr)  \leq \frac{\Delta^2 + m \epsilon^2 /2}{\Delta^2 + m \epsilon^2} = 1 - \frac{m \epsilon^2 /2}{\Delta^2 + m \epsilon^2} \;.
\]
Using the former result $\cos\big(\angle(\theta, \, \theta')\bigr) \geq 0$ and the last inequality above, we obtain
\[
\sin^2\!\Big(\angle(\theta, \, \theta')\Bigr)  =  1 - \cos^2\!\Big(\angle(\theta, \, \theta')\Bigr)  \geq 1 - \cos\!\Big(\angle(\theta, \, \theta')\Bigr) \geq \frac{m \epsilon^2 /2}{\Delta^2 + m \epsilon^2} \geq \frac{m \epsilon^2}{4 \Delta^2} \,,
\]
where we again used $m = d-1 \leq d$ and our assumption on $\Delta$: $\sqrt{m}\, \epsilon \leq \sqrt{d}\, \epsilon \leq \Delta $. We conclude the proof by noting that $\angle(\theta, \, \theta') \geq \sin \bigl(\angle(\theta, \, \theta')\bigr) = \sqrt{\sin^2\bigl(\angle(\theta, \, \theta')\bigr)}$ since $\angle(\theta, \, \theta') \in [0,\pi]$: 
$$
\angle(\theta, \, \theta') \geq \frac{\sqrt{m} \epsilon}{2 \Delta} = \frac{\sqrt{d-1} \epsilon}{2 \Delta} \;.
$$
\end{proof}

Our third and last lemma in this subsection provides a lower bound on the Gaussian measure of a double cone in dimension~2. We say that $\cC \subset \R^2$ is an \emph{open double cone with apex $z \in \R^2$} if it is of the form
\[
\cC = \Bigl\{ z + a u + b v : \, (a,b) \in \R^{\star 2}_+ \cup\, \R^{\star 2}_- \Bigr\}
\]
for some linearly independent vectors $u, v \in \R^2$. It is clear that there is not a one-to-one correspondence between $(u,v)$ and $\cC$ (several pairs $(u,v)$ correspond to the same $\cC$). However, the value of the internal angle $\angle(u,v) := \arccos\bigl(\langle u , v\rangle/ (\|u\| \, \|v\|)\bigr) \in (0,\pi)$ between $u$ and $v$ is the same for all pairs $(u,v)$ that correspond to $\cC$. We thus call $\angle(u,v)$ the \emph{angle of the open double cone $\cC$}.

\begin{lem}
\label{lem:GaussianMassCone}
Let $\cC \subset \R^2$ be an open double cone with apex $z \in \R^2$ and angle $\cA \in (0,\pi)$. Then, the measure of $\cC$ with respect to the standard Gaussian distribution $\gamma_2 = \cN(0,\Id_{2 \times 2})$ on $\R^2$ is lower bounded by
\[
\gamma_2(\cC) \geq \frac{\cA}{2 \pi} \, e^{-\| z \|^2} \,.
\]
\end{lem}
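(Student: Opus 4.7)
\textbf{Proof plan for Lemma~\ref{lem:GaussianMassCone}.} The plan is to translate the apex to the origin and then exploit the central symmetry of a double cone to get rid of the resulting exponential cross-term. First I would write
\[
\gamma_2(\cC) = \int_{\cC} \frac{1}{2\pi} e^{-\|x\|^2/2}\, dx = e^{-\|z\|^2/2} \int_{\cC_0} \frac{1}{2\pi} e^{-\|y\|^2/2}\, e^{-\langle y, z\rangle}\, dy \,,
\]
where $\cC_0 \eqdef \cC - z$ is the translated double cone with apex at the origin, obtained via the change of variables $y = x-z$ and the identity $\|y+z\|^2 = \|y\|^2 + 2\langle y, z\rangle + \|z\|^2$.

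The key observation is that, by definition of a double cone, $\cC_0$ is centrally symmetric: $y \in \cC_0 \iff -y \in \cC_0$. Using this symmetry to average the integrand over $y$ and $-y$, the cross-term becomes
\[
\int_{\cC_0} e^{-\|y\|^2/2}\, e^{-\langle y, z\rangle}\, dy = \int_{\cC_0} e^{-\|y\|^2/2}\, \cosh\bigl(\langle y, z\rangle\bigr)\, dy \geq \int_{\cC_0} e^{-\|y\|^2/2}\, dy \,,
\]
where the inequality uses $\cosh \geq 1$. This yields $\gamma_2(\cC) \geq e^{-\|z\|^2/2} \, \gamma_2(\cC_0)$.

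It then remains to compute $\gamma_2(\cC_0)$ by rotational invariance of the standard Gaussian: the double cone $\cC_0$ consists of two opposite open sectors of angle $\cA$ each, so $\gamma_2(\cC_0) = 2\cA/(2\pi) = \cA/\pi$. Putting everything together gives the (stronger) bound $\gamma_2(\cC) \geq (\cA/\pi) \, e^{-\|z\|^2/2}$, which implies the claim since $2 \geq e^{-\|z\|^2/2}$ for all $z$, \ie $(\cA/\pi) e^{-\|z\|^2/2} \geq (\cA/(2\pi)) e^{-\|z\|^2}$.

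The only subtle point is the symmetrization step: it crucially relies on the central symmetry of $\cC_0$, which is the reason the statement concerns \emph{double} cones rather than simple cones. A single sector would force us to keep a net factor of the form $\E[e^{-\langle Y, z\rangle}\mathds{1}_{Y \in \text{sector}}]$, which cannot be bounded below by $1$ without losing an additional $e^{-\|z\|^2/2}$ factor (as hinted by the footnote in the paper stating that the constant in the exponential could be improved).
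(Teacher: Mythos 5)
Your proof is correct and takes a genuinely different (and sharper) route than the paper's. Both proofs begin by translating the apex to the origin, but they diverge at the crucial bounding step. The paper applies the crude inequality $\|z+u\|^2 \leq 2\|z\|^2 + 2\|u\|^2$ to the exponent, which produces the factor $e^{-\|z\|^2}$ and replaces the Gaussian density by $e^{-\|u\|^2}$ (a density with half the variance), whence $\int_0^{\infty} r e^{-r^2}\,dr = 1/2$ and the final bound $\frac{\cA}{2\pi}e^{-\|z\|^2}$. You instead expand $\|y+z\|^2$ exactly, pull out $e^{-\|z\|^2/2}$, and eliminate the cross-term $e^{-\langle y,z\rangle}$ by symmetrizing over $y \leftrightarrow -y$ — valid precisely because the translated double cone $\cC_0$ is centrally symmetric — and using $\cosh \geq 1$. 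Since the remaining integrand is the standard Gaussian density, rotational invariance gives $\gamma_2(\cC_0) = \cA/\pi$ exactly. Your resulting bound $\frac{\cA}{\pi} e^{-\|z\|^2/2}$ strictly dominates the paper's (better constant and better exponential rate), and this is exactly the improvement the paper's own footnote flags as possible. Your final observation — that the central symmetry of the \emph{double} cone is what makes the symmetrization costless, and that a single sector would leave an unavoidable $e^{-\|z\|^2/2}$ loss — is also apt: it clarifies why the lemma is stated for double cones rather than simple cones.
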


\ \\
We emphasize that rather intuitively, the above lower bound is proportional to the angle $\cA$ and decreases exponentially fast with $\|z\|^2$. (The constant of $1$ appearing in the exponential could certainly be optimized, but this one is sufficient for our purposes.)

\begin{proof}
We carry out a change of variables by a translation around $z$: writing $\cC - z = \bigl\{x - z: \, x \in \cC \bigr\}$ and using the inequality $\|z+u\|^2 \leq 2 \|z\|^2 + 2 \|u\|^2$, we get
\begin{align*}
\gamma_2(\cC) & = \frac{1}{2 \pi}\int_{\cC} e^{-\|x\|^2/2} \, \dd x = \frac{1}{2 \pi}\int_{\cC-z} e^{-\|z+u\|^2 /2} \, \dd u \geq \frac{e^{-\|z\|^2}}{2 \pi}\int_{\cC-z} e^{-\|u\|^2} \dd u \\
& = \frac{e^{-\|z\|^2}}{2 \pi} \, 2 \int_0^{\cA} \left( \int_0^{+\infty} r e^{-r^2} \dd r \right) \dd \alpha = \frac{e^{-\|z\|^2}}{2 \pi} \, \cA \,,
\end{align*}
where the second line is obtained by parameterizing $\cC-z$
with polar coordinates and by noting that $\cC-z$ is an open double cone of angle $\cA$ pointed at the origin. This concludes the proof.
\end{proof}

\subsubsection{Proof of Lemma~\ref{lem:lowerbound-packing}}
\label{sec:proofCombinatorialLemma}

We now prove Lemma~\ref{lem:lowerbound-packing} using the intermediary results of the previous subsection. We use the same notation as in Section~\ref{sec:LBproof-main}.
 Let $\Gamma \subseteq \{-1,1\}^{d-1}$ be a set provided by Varshamov-Gilbert's lemma in dimension $m=d-1$ (cf.~Lemma~\ref{lem:VG} in Appendix~\ref{s:C}). Next we show that the set
\begin{equation*}
\Theta \eqdef \bigl\{\Delta\bigr\} \times \bigl(\epsilon \Gamma\bigr) = \Bigl\{ (\Delta,\epsilon u_1, \epsilon u_2, \ldots, \epsilon u_{d-1}) : \, (u_1,\ldots,u_{d-1}) \in \Gamma \Bigr\} \subset \R^d
\end{equation*}
satisfies the statement of Lemma~\ref{lem:lowerbound-packing}. We can already see that its cardinality is $|\Theta| = |\Gamma| \geq e^{m/8} \geq e^{(d-1)/8}$. It remains to prove that, for all $\theta \neq \theta' \in \Theta$,
\begin{equation}
\big\|\Phi_{\theta}-\Phi_{\theta'}\big\|_{L^1(Q_0)} \geq \frac{\sqrt{d-1} \, \epsilon}{4 \pi \Delta} \, e^{-\Delta^2}\,,
\label{eq:lowerbound-packing2}
\end{equation}
where $Q_0$ denotes the distribution of a standard Brownian motion $W=(W(t))_{0 \leq t \leq 1}$ on $[0,1]$, and where $\|h\|_{L^1(Q_0)} \eqdef \E\bigl[|h(W)|\bigr]$.

\begin{proof}[Proof of~\eqref{eq:lowerbound-packing2}]
Let $\theta \neq \theta' \in \Theta$. Let $W=(W(t))_{0 \leq t \leq 1}$ be a standard Brownian motion on some probability space $(\Omega,\mathcal{F},\PR)$. Noting that $\big| \Phi_{\theta}(W) - \Phi_{\theta'}(W) \big|=\indicator{\Phi_{\theta}(W) \neq \Phi_{\theta'}(W)}$ a.s., we have
\begin{align}
\big\|\Phi_{\theta}-\Phi_{\theta'}\big\|_{L^1(Q_0)}  & = \PR\bigl(\Phi_{\theta}(W) \neq \Phi_{\theta'}(W) \bigr) \nonumber \\
& = \PR\Bigl( \bigl\{\| \tilde{W} - \theta \| \leq \| \tilde{W} \| < \| \tilde{W} - \theta' \| \bigr\} \cup \bigl\{\| \tilde{W} - \theta' \| \leq \| \tilde{W} \| < \| \tilde{W} - \theta \| \bigr\} \Bigr) \nonumber \\
& \geq \PR\Bigl( \, \underbrace{\bigl\{\| \tilde{W} - \theta \| < \| \tilde{W} \| < \| \tilde{W} - \theta' \| \bigr\} \cup \bigl\{\| \tilde{W} - \theta' \| < \| \tilde{W} \| < \| \tilde{W} - \theta \| \bigr\}}_{=:A} \, \Bigr) \,, \nonumber
\end{align}
where the line before last follows from Lemma~\ref{lem:BayesClassifierFinite}, and where we recall that $\tilde{W} \eqdef \bigl(\tilde{W}_j\bigr)_{1 \leq j \leq d} \in \R^d$ with $\tilde{W}_j \eqdef \int_0^1 \varphi_j(t) d W(t)$. In order to bound $\PR(A)$ from below, we project (orthogonally) all points in $\R^d$ onto the unique plane $\cP$ that contains $0$ and the non-colinear vectors $\theta$ and $\theta'$ (note from Lemma~\ref{lem:BayesAngle} that $0 < \angle(\theta,\theta') \leq  \pi/2 < \pi$). As shown in Figure~\ref{fig:2Dprojection}, we define $z \in \cP$ as the intersection between the perpendicular bisectors $\cD$ and $\cD'$ of the segments $[0,\theta]$ and $[0,\theta']$ on the plane $\cP$. Writing $r_{-\pi/2}$ for the rotation of angle $-\pi/2$ on the plane $\cP$, we also consider the unit vectors $u = r_{-\pi/2}\bigl(\theta/\|\theta\|\bigr)$ and $v = r_{-\pi/2}\bigl(\theta'/\|\theta'\|\bigr)$ that support the lines $\cD$ and $\cD'$ respectively. 
\begin{figure}[h!]
\begin{center}
\begin{tikzpicture}[scale=1] 
\filldraw [gray!25,draw=black] (0,0.525)--(-2,-0.55)--(-3,2.2)--cycle; 
\filldraw [gray!25,draw=black] (0,0.525)--(2,-0.55)--(3,2.2)--cycle; 
\draw [->,-latex,dashed] (0,-2)--++(0,4) node [above] {$y$}; 
\draw [->,-latex] (0,-2)--++(-2,4) node [above] {$\theta$}; 
\draw [->,-latex] (0,-2)--++(2,4) node [above] {$\theta'$}; 
\draw [ |<->|] (-2,2) -- (0,2) node [midway, above] {$L$}; 
\draw [ |<->|] (2,2) -- (0,2) node [midway, above] {$L$}; 
\node at (0,-2) [below] {$0$}; 
\node at (-1,0) [below ] {$\frac{\theta}{2}$};
\node at (1,0) [below ] {$\frac{\theta'}{2}$};
\draw (-1,0) to (5,3.3);
\draw (-1,0) to (-2.5,-0.825);
\draw (1,0) to (-5,3.3);
\draw (1,0) to (2.5,-0.825);
\node at (0,0.2) [right] {$z$};
\draw (-0.59,0.18) arc (22:109:0.5);
\draw (1.20,0.38) arc (65:145:0.5);
\node at (1.03,0.38) [above] {\small $\pi/2$}; 
\node at (-1.03,0.38) [above] {\small $\pi/2$}; 
\node at (2,1) [below] {$\mathcal{C}$}; 
\node at (3.4,2.8) [below] {$\mathcal{D}$}; 
\node at (2.5,-0.8) [below] {$\mathcal{D}'$}; 
\draw [red, very thick, -stealth] (0,0.55) --++ (0.7,0.36) node [above] {$u$};
\draw [red, very thick, -stealth] (0,0.55) --++ (0.7,-0.36) node [below] {$v$};
\end{tikzpicture}
\end{center}
\caption{\label{fig:2Dprojection} The main objects of interest on the plane $\cP$.}
\end{figure}
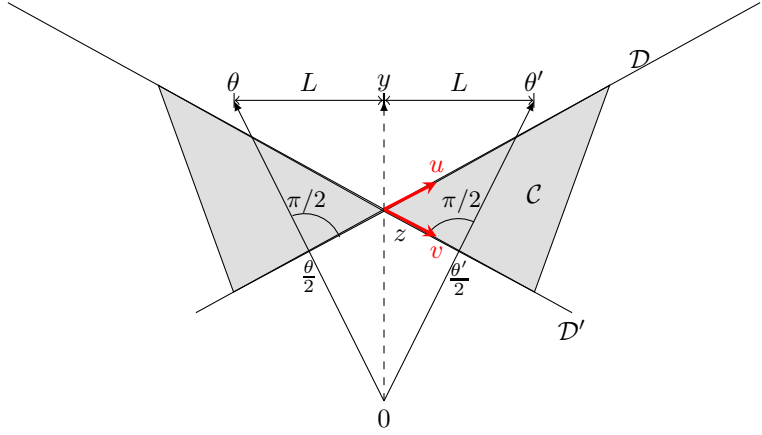
%
%

Writing $\tilde{W}_{\cP}$ for the orthogonal projection of $\tilde{W} \in \R^d$ onto $\cP$, we can see that
\[
\PR(A) = \PR\Bigl(\tilde{W}_{\cP} \in \cC \Bigr) \quad \textrm{with} \quad \cC \eqdef \Bigl\{ z + a u + b v : \, (a,b) \in \R_+^{*2} \cup\, \R_-^{*2} \Bigr\} \,.
\]

Let $(e_1,e_2)$ be any orthonormal basis of $\cP$. Decomposing any $w \in \cP$ as $w = w^1 e_1 + w^2 e_2$ (and similarly for $u$ and $v$), we can see that
\[
w \in \cC \iff (w^1,w^2) \in \underbrace{\Bigl\{ (z^1,z^2) + a (u^1,u^2) + b (v^1,v^2) : \, (a,b) \in \R_+^{*2} \cup\, \R_-^{*2} \Bigr\}}_{=:\tilde{\cC}} \,.
\]
Therefore,
\[
\PR(A) =
\PR\Bigl(\bigl(\tilde{W}_{\cP}^1,\tilde{W}_{\cP}^2\bigr) \in \tilde{\cC} \, \Bigr) = \gamma_2\bigl(\tilde{\cC} \,\bigr) \,,
\]
where $\gamma_2 = \cN(0,\Id_{2 \times 2})$ denotes the standard Gaussian distribution on $\R^2$. The last equality holds true because $W=(W(t))_{0 \leq t \leq 1}$ is a standard Brownian motion so that the $\tilde{W}_j = \int_0^1 \varphi_j(t) dW(t)$, $1 \leq j \leq d$, are independent $\cN(0,1)$ random variables (because the $\varphi_j$ are orthonormal), so that $\bigl(\tilde{W}^1,\tilde{W}^2\bigr)$ is a standard two-dimensional Gaussian vector (because $e_1$ and $e_2$ are orthonormal).

Now, we note that the subset $\tilde{\cC} \subset \R^2$ is an open double cone with apex $(z_1,z_2)$. Since $(e_1,e_2)$ is an orthonormal basis of $\cP$, the angle of $\tilde{\cC}$ is equal to $\angle(u,v) = \angle\bigl(r_{-\pi/2}\bigl(\theta/\|\theta\|\bigr),r_{-\pi/2}\bigl(\theta'/\|\theta'\|\bigr)\bigr) = \angle(\theta,\theta')$. Therefore, applying Lemma~\ref{lem:GaussianMassCone} and then Lemma~\ref{lem:BayesAngle},
\begin{align}
\PR(A) \
& \geq \frac{\angle(\theta,\theta')}{2 \pi} \, e^{-(z_1^2+z_2^2)} \geq \frac{e^{-(z_1^2+z_2^2)} \sqrt{d-1} \, \epsilon}{4 \pi \Delta} \,. \label{eq:lowerbound-packing4}
\end{align}
We conclude the proof by upper bounding $z_1^2 + z_2^2 = \|z\|^2$ as follows. First note from Figure~\ref{fig:2Dprojection} that
\[
\cos\left(\frac{\angle(\theta, \, \theta')}{2}\right) = \frac{\|\theta\|/2}{\|z\|} \qquad \textrm{so that} \qquad \|z\| = \frac{\|\theta\|}{2 \cos\left(\frac{\angle(\theta, \, \theta')}{2}\right)} \;.
\]
But, from the inequality $0 \leq \angle(\theta, \, \theta')/2 \leq \pi/4$ (see Lemma~\ref{lem:BayesAngle}) we get that $\cos\bigl(\angle(\theta, \, \theta')/2\bigr) \geq 1/\sqrt{2}$, so that $\|z\| \leq \|\theta\|/\sqrt{2}$, \ie,
\[
z_1^2 + z_2^2 \leq \frac{\|\theta\|^2}{2} = \frac{\Delta^2+(d-1) \epsilon^2}{2} \leq \Delta^2
\]
by the assumption $\Delta \geq \sqrt{d} \epsilon$. Combining $\|z\|^2 \leq \Delta^2$ with Equation \eqref{eq:lowerbound-packing4}  concludes the proof.
\end{proof}


\subsubsection{Two well-known lemmas}
\label{s:C}

The next combinatorial result is known as Varshamov-Gilbert's lemma. It provides a lower bound on the packing entropy of the $m$-dimensional hypercube $\{-1,1\}^m$ endowed with the Hamming metric, at scale~$m/4$. This result indicates that among the $2^m$ corners of $\{-1,1\}^m$, exponentionally many of them are almost opposite from one another. A proof can be found, \eg, in \cite[Lemma~4.7]{Massart03StFlour}.

\begin{lem}[Varshamov-Gilbert's lemma]
\label{lem:VG}
Let $m \geq 1$. There exists a subset $\Gamma \subseteq \{-1,1\}^m$ of cardinality $|\Gamma| \geq e^{m/8}$ such that
\[
\forall x \neq y \in \Gamma, \quad \sum_{j=1}^m \indicator{\{x_j \neq y_j\}} > \frac{m}{4} \,.
\]
\end{lem}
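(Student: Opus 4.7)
The plan is to use a standard greedy packing argument combined with a Chernoff/Hoeffding bound for binomial tails. Concretely, I would construct $\Gamma$ iteratively by starting from the whole hypercube and successively removing balls of Hamming radius $\lfloor m/4 \rfloor$ around each chosen point; the cardinality lower bound then reduces to a volume estimate for such a ball.

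More precisely, let $r := \lfloor m/4 \rfloor$ and denote by $B(x,r) := \{y \in \{-1,1\}^m : d_H(x,y) \leq r\}$ the Hamming ball of radius $r$, where $d_H(x,y) = \sum_{j=1}^m \indicator{\{x_j \neq y_j\}}$. By translation invariance of the Hamming metric, $|B(x,r)|$ does not depend on $x$ and equals $|B(0,r)| = \sum_{k=0}^{r} \binom{m}{k}$. I would then run the following greedy procedure: pick an arbitrary $x^{(1)} \in \{-1,1\}^m$; having chosen $x^{(1)},\ldots,x^{(k)}$, if the remaining set $\{-1,1\}^m \setminus \bigcup_{i=1}^k B(x^{(i)},r)$ is nonempty, pick any element from it as $x^{(k+1)}$. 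By construction any two selected points have Hamming distance strictly greater than $r \geq m/4 - 1$, and a brief check (using that $d_H$ takes integer values while $m/4$ may not) will confirm $d_H(x^{(i)},x^{(j)}) > m/4$.

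The procedure terminates after $N$ steps with $N \cdot |B(0,r)| \geq 2^m$, hence $|\Gamma| \geq 2^m/|B(0,r)|$. The core quantitative step is then to show $|B(0,r)| \leq 2^m e^{-m/8}$. Writing $|B(0,r)|/2^m = \PR(S_m \leq r)$ where $S_m = \sum_{i=1}^m Z_i$ for i.i.d.~$Z_i \sim \Ber(1/2)$, and noting $r \leq m/4 = \E[S_m] - m/4$, Hoeffding's inequality yields
\[
\PR\!\left(S_m - \frac{m}{2} \leq -\frac{m}{4}\right) \leq \exp\!\left(-\frac{2 (m/4)^2}{m}\right) = e^{-m/8}.
\]
Combining this with the greedy lower bound gives $|\Gamma| \geq e^{m/8}$, which is the desired claim.

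The only genuine subtlety lies in handling the distinction between $d_H > m/4$ (strict) and $d_H \geq \lceil m/4 \rceil$; this is resolved by choosing $r = \lfloor m/4 \rfloor$ and observing that removing the closed ball of this radius guarantees a strict separation exceeding $m/4$ for all $m$ not divisible by $4$, while for $m$ divisible by $4$ one verifies the bound directly (alternatively, one may slightly enlarge the balls by $1$, at the mild cost of checking that the tail bound $e^{-m/8}$ still applies, which it does since $(m/4+1 - m/2)^2/m \geq 1/8$ for $m$ large enough and the small cases can be handled by hand). Apart from this minor bookkeeping, the argument is purely combinatorial and requires no further ingredients beyond Hoeffding's inequality.
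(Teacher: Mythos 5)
Your argument is correct and is one of the two classical routes to Varshamov--Gilbert: a greedy (maximal) packing of $\{-1,1\}^m$ combined with a volume bound on the Hamming ball and a Hoeffding tail estimate to show $|B(0,\lfloor m/4\rfloor)| \leq 2^m e^{-m/8}$. The paper itself does not include a proof---it cites \cite[Lemma~4.7]{Massart03StFlour}---so there is no in-paper argument to compare against; your approach is a self-contained and standard derivation of exactly the statement used here.

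One remark on the closing paragraph: the worry about the case $4 \mid m$ is unfounded. You remove the \emph{closed} ball $B(x,r) = \{y : d_H(x,y) \leq r\}$ with $r = \lfloor m/4 \rfloor$, so every surviving point satisfies $d_H > r$, hence $d_H \geq r+1$. When $4 \mid m$ this gives $d_H \geq m/4 + 1 > m/4$, and when $4 \nmid m$ it gives $d_H \geq \lfloor m/4\rfloor + 1 > m/4$ as well; strict separation holds uniformly in $m$, with no case split and no need to enlarge the balls. The rest of the argument (the volume bound $N \cdot |B(0,r)| \geq 2^m$ for a maximal packing, the identification of $|B(0,r)|/2^m$ with $\PR(S_m \leq r)$, and Hoeffding's inequality $\PR(S_m \leq m/4) \leq e^{-m/8}$) is clean and correct, so the conclusion $|\Gamma| \geq e^{m/8}$ follows.
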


The next lemma is a well-known version of Fano's inequality that follows, \eg, from \cite[Chapter~VII, Lemma~1.1]{IbHa-81-StatisticalEstimation} or \cite[Theorem~2.11.1]{CoTh06} (see also Proposition~1 in the recent survey \cite{GeMeSt-17-Fano}).

We recall that the Kullback-Leibler divergence $\KL(\PR,\mathbb{Q})$ between two probability distributions $\PR$ and~$\mathbb{Q}$ on the same measurable space $(E,\mathcal{B})$ is defined by
\begin{numcases}{\KL(\PR,\mathbb{Q}) \eqdef}
\nonumber
\int_E \ln \left(\frac{\dd \PR}{\dd \mathbb{Q}}\right) \dd \PR & if $\PR$ is absolutely continuous with respect to $\mathbb{Q}$; \\
\nonumber
+\infty & otherwise.
\end{numcases}

\begin{lem}[Fano's inequality]
\label{lem:Fano}
Let $(E,\mathcal{B})$ be any measurable space and $N \geq 2$. Let $(A_1,\ldots,A_N)$ be a measurable partition of $(E,\mathcal{B})$ and $(\PR_1,\ldots,\PR_N)$ a family of probability distributions on $(E,\mathcal{B})$. Then,
\[
\frac{1}{N} \sum_{i=1}^N \PR_i(A_i) \leq \frac{\displaystyle  \inf_{\Q} \frac{1}{N} \sum_{i=1}^N \KL(\PR_i,\Q) + \log 2}{\log N} \;,
\]
where the infimum is over all probability distributions $\Q$ on $(E,\mathcal{B})$.
\end{lem}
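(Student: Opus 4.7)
The plan is to interpret the inequality as a standard Fano-type bound for a uniform prior on a finite set of hypotheses, and then to recognize the infimum on the right-hand side as (equal to) the mutual information between the hypothesis and the observation.

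First I would set the probabilistic stage. Introduce an auxiliary random pair $(\theta, X)$ where $\theta$ is uniform on $\{1, \ldots, N\}$ and, conditionally on $\theta = i$, $X$ has law $\PR_i$ on $(E, \mathcal{B})$. Define the (deterministic) decoder $\hat\theta(X) := i$ on the event $\{X \in A_i\}$; since $(A_i)_{i=1}^N$ is a measurable partition, $\hat\theta$ is well-defined and measurable. Then
\[
\PR(\hat\theta = \theta) = \frac{1}{N}\sum_{i=1}^{N} \PR(\hat\theta = i \mid \theta = i) = \frac{1}{N}\sum_{i=1}^{N} \PR_i(A_i),
\]
so the left-hand side of the lemma is exactly the average success probability.

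Next I would invoke the classical Fano argument to bound this success probability. Let $E := \mathds{1}_{\{\hat\theta \neq \theta\}}$. The chain rule together with $H(E) \leq \log 2$ and $H(\theta \mid \hat\theta, E) \leq \PR(E=1)\log(N-1) \leq \PR(E=1) \log N$ yields
\[
H(\theta \mid \hat\theta) \leq \log 2 + \PR(E=1)\log N.
\]
Since $\theta$ is uniform, $H(\theta) = \log N$, hence $I(\theta;\hat\theta) \geq \log N - \log 2 - \PR(E=1)\log N$, which rearranges to
\[
\PR(\hat\theta = \theta) \leq \frac{I(\theta; \hat\theta) + \log 2}{\log N} \leq \frac{I(\theta; X) + \log 2}{\log N},
\]
where the last step is the data processing inequality applied to the deterministic map $X \mapsto \hat\theta(X)$.

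Finally I would identify $I(\theta; X)$ with the quantity appearing in the statement. Writing $\bar{\PR} := \frac{1}{N}\sum_{i=1}^{N}\PR_i$, a direct computation gives
\[
I(\theta; X) = \frac{1}{N}\sum_{i=1}^{N} \KL(\PR_i, \bar{\PR}).
\]
The only non-routine point is the variational identity
\[
\frac{1}{N}\sum_{i=1}^{N} \KL(\PR_i, \Q) = \frac{1}{N}\sum_{i=1}^{N} \KL(\PR_i, \bar{\PR}) + \KL(\bar{\PR}, \Q)
\]
for every probability $\Q$ on $(E,\mathcal{B})$ (with the convention $+\infty$ when $\PR_i$ is not absolutely continuous with respect to $\Q$), which follows by splitting the logarithm $\log(p_i/q) = \log(p_i/\bar p) + \log(\bar p/q)$ and integrating. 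Since $\KL(\bar{\PR}, \Q) \geq 0$ with equality at $\Q = \bar{\PR}$, the infimum over $\Q$ equals the mutual information, and plugging this into the Fano inequality above closes the proof. The main (mild) obstacle is the variational identity, but it is a one-line manipulation once the density decomposition is written down.
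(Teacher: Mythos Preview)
Your proof is correct and follows the classical route (Fano via $H(\theta\mid\hat\theta)$, data processing, and the variational identity for the mixture KL). The paper itself does not prove this lemma at all: it simply cites standard references (Ibragimov--Has'minskii, Cover--Thomas, and the survey \cite{GeMeSt-17-Fano}), so there is nothing to compare against beyond noting that your argument is precisely the textbook one those references contain. One cosmetic point: you reuse the symbol $E$ for the error indicator while the lemma already uses $E$ for the ambient space; renaming it would avoid a clash.
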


%
%

\section{Truncated nearest neighbor strategy (Theorem \ref{thm:lwr_bound_knn})}

This appendix section gathers the proof of the lower bound of the nearest neighbor method used with a sample-splitting thresholding strategy, \ie, half of the learning sample is used to choose a thresholding dimension $\hat{d}_n$ and then the nearest neighbor classifier is computed on the remaining part of the samples. Therefore, $\hat{d}_n$ is choosen independently from the second part of the samples. 

\subsection{Smoothness of thee Gaussian translation model}
\label{proof:gaussian_alpha}

This paragraph is devoted to the computation of the smoothness index $\beta_d$ involved in the Gaussian translation model in dimension $d \in \mathbb{N}^\star$ (see, \eg, Equation \ref{eq:smooth}). Below, 
$\gamma$ will refer to the density of the $d$-dimensional standard Gaussian random variable and we omit the dependency in $d$ to alleviate the notations.

\begin{proof}[Proof of Proposition \ref{prop:gaussian_alpha}]

According to the definition of the smoothness parameter given in Equation \ref{eq:smooth}, we compute the average value of $\eta$ on a ball $B(x,r)$ and compare it to $\eta(x)$:
\begin{eqnarray}
\lefteqn{\eta(B(x,r))-\eta(x)}\nonumber \\ 
&= &\frac{1}{\mu(B(x,r)} \int_{B(x,r)} \eta(s) d\mu(s) - \frac{\gamma(x)}{\gamma(x)+\gamma(x-m)},\nonumber\\
& = & \frac{2}{\int_{B(x,r)}  \gamma(s) + \gamma(s-m) ds} \int_{B(x,r)} \frac{\gamma(s)}{\gamma(s)+\gamma(s-m)} \frac{1}{2} [\gamma(s)+\gamma(s-m)] ds  - \frac{\gamma(x)}{\gamma(x)+\gamma(x-m)},\nonumber\\
& = & \frac{\gamma(B(x,r))}{\gamma(B(x,r))+\gamma(B(x-m,r))} - \frac{\gamma(x)}{\gamma(x)+\gamma(x-m)}, \nonumber\\
& = & \frac{[\gamma(x)+\gamma(x-m)] \gamma(B(x,r))-\gamma(x) [\gamma(B(x,r))+\gamma(B(x-m,r))]}{[\gamma(x)+\gamma(x-m)] [\gamma(B(x,r))+\gamma(B(x-m,r))]}, \nonumber\\
& = & \frac{\gamma(x-m) \gamma(B(x,r)) - \gamma(x)\gamma(B(x-m,r))}{[\gamma(x)+\gamma(x-m)] [\gamma(B(x,r))+\gamma(B(x-m,r))]}. 
\label{eq:diff_eta}
\end{eqnarray}
It is then necessary to compare $\gamma(B(x,r))$ with $\gamma(x) \lambda(B_r)$ where $\lambda(B_r)$ is the Lebesgue measure of the centered ball of radius $r$ in $\mathbb{R}^d$. For this purpose, we can use the well known convexity inequality on Gaussian measures of shifted balls:
\begin{equation}\label{eq:gaussian_shift}
\exp(-\|x\|^2/2) \gamma(B(0,r)) \leq \gamma(B(x,r)) \leq \gamma(B(0,r)).
\end{equation}
In particular, we have (see \cite{Kuelbs94}) when $r \longrightarrow 0$ that
$$
\gamma(B(x,r)) \sim \exp(-\|x\|^2/2) \gamma(B(0,r)),
$$
but the r.h.s. of \eqref{eq:gaussian_shift} is tight only for $x$ close to $0$.
Expanding the denominator of \eqref{eq:diff_eta}, we obtain that
\begin{eqnarray}
\lefteqn{\left| \eta(B(x,r))-\eta(x) \right|} \nonumber \\
& =& \frac{\left| \gamma(x-m) \gamma(B(x,r)) - \gamma(x) \gamma(B(x-m,r))\right|}{\gamma(x) \gamma(B(x,r))+\gamma(x)\gamma(B(x-m,r))+\gamma(x-m)\gamma(B(x,r))+\gamma(x-m)\gamma(B(x-m,r))} \nonumber\\
& \leq & \frac{\left| \gamma(x-m) \gamma(B(x,r)) - \gamma(x) \gamma(B(x-m,r))\right|}{\gamma(x)\gamma(B(x-m,r))+\gamma(x-m)\gamma(B(x,r))}.
\label{eq:up_KLL}
\end{eqnarray}
Concerning the numerator, a simple change of variable leads to
\begin{eqnarray*}
\lefteqn{\gamma(x-m) \gamma(B(x,r)) - \gamma(x) \gamma(B(x-m,r)) }\\
& = & (2\pi)^{-d}\int_{B(0,r)} \left\lbrace
 e^{-\|x-m\|^2/2} e^{-\|x-s\|^2/2} - e^{-\|x\|^2/2} e^{-\|x-m-s\|^2/2} \right\rbrace ds.
\end{eqnarray*}
For all $x\in \mathbb{R}^d$ and $s\in B(0,r)$, the term inside the integral above  may be written as
$$
e^{-\|x-m\|^2/2} e^{-\|x-s\|^2/2} - e^{-\|x\|^2/2} e^{-\|x-m-s\|^2/2} = e^{-\|x-m\|^2/2-\|x\|^2/2} e^{-\|s\|^2/2}  \left[ e^{\langle x,s\rangle} - e^{\langle x-m,s\rangle}\right].$$
We can use the following upper bound for any real value $a$:
$$
|e^a-1-a|\leq \frac{a^{2}e^{|a|}}{2},
$$
with $a=\langle x,s\rangle$ and $a=\langle x-m,s\rangle$ and deduce that
$$
|e^{\langle x,s\rangle} - e^{\langle x-m,s\rangle}- \langle m,s\rangle| \leq \frac{s^2}{2} \left( \|x-m\|^2 e^{|\langle x-m,s\rangle|}+\|x\|^2e^{|\langle x,s \rangle|} \right).
$$
Therefore, we obtain
\begin{eqnarray*}
\lefteqn{\left| \gamma(x-m) \gamma(B(x,r)) - \gamma(x) \gamma(B(x-m,r))\right|}\\  
&\leq& \gamma(x) \gamma(x-m)
  \int_{B(0,r)} e^{-\|s\|^2/2} \langle m,s\rangle ds  \\ 
  & & +\frac{r^2}{2} \gamma(x) \gamma(x-m)\left[ \|x-m\|^2 \int_{B(0,r)} e^{-\frac{\|s\|^2}{2} } e^{|\langle x-m,s\rangle|} ds
  +\|x\|^2 \int_{B(0,r)} e^{-\frac{\|s\|^2}{2} } e^{|\langle x,s\rangle|} ds \right] \\
  & = & 
  \frac{r^2}{2} \gamma(x) \gamma(x-m)\left[ \|x-m\|^2 \int_{B(0,r)} e^{-\frac{\|s\|^2}{2} } e^{|\langle x-m,s\rangle|} ds
  +\|x\|^2 \int_{B(0,r)} e^{-\frac{\|s\|^2}{2} } e^{|\langle x,s\rangle|} ds \right]  \\
  & \leq & \frac{r^2}{2} \gamma(x)\gamma(x-m)  \|x-m\|^2 \left( \int_{B(0,r)} e^{-\frac{\|s\|^2}{2} } e^{\langle x-m,s\rangle} ds + \int_{B(0,r)} e^{-\frac{\|s\|^2}{2} } e^{-\langle x-m,s\rangle} ds \right) \\
  & & + \frac{r^2}{2} \gamma(x)\gamma(x-m)  \|x\|^2 \left( \int_{B(0,r)} e^{-\frac{\|s\|^2}{2} } e^{\langle x,s\rangle} ds + \int_{B(0,r)} e^{-\frac{\|s\|^2}{2} } e^{-\langle x,s\rangle} ds \right)\\
  & = & \frac{r^2}{2} \left[ \|x-m\|^2 \gamma(x) [\gamma(B(x-m,r))+\gamma(B(m-x,r))]
  +
  \|x\|^2 \gamma(x-m) [\gamma(B(x,r))+\gamma(B(-x,r))]\right] \\
  & = & r^2 \left[ \|x-m\|^2\gamma(x-m) \gamma(B(x,r)) + \|x\|^2 \gamma(x) \gamma(B(x-m,r))\right],
\end{eqnarray*}
where the last line comes from the symmetry of the Gaussian distribution.
Using this last inequality in Inequality (\ref{eq:up_KLL}) yields:
\begin{equation}
\label{eq:tec1}
\left| \eta(B(x,r)) - \eta(x)\right|\leq  r^2 \left[ \|x-m\|^2 +\|x\|^2 \right].
\end{equation}
Now, we should remark that
$$
\gamma(B(0,r)) = \int_{B(0,r)} \frac{e^{-|u|^2/2} }{\sqrt{2 \pi}^d} du
\geq e^{-r^2/2} (2\pi)^{-d/2} \lambda(B(0,r)) \geq e^{-r^2/2} (2\pi)^{-d/2} r^d  \frac{\pi^{d/2}}{\Gamma(d/2+1)},
$$
where we used the direct computation of the Lebesgue volume of the unit ball in $\R^d$
$$
\lambda(B(0,1)) = \frac{\pi^{d/2}}{\Gamma(d/2+1)}.
$$
Therefore, we obtain that
$$
r^2 \leq \left( \frac{\gamma(B(0,r)) e^{r^2/2} (2\pi)^{d/2} \Gamma(d/2+1)}{\pi^{d/2}}\right)^{2/d} = 2 e^{r^2/d} \Gamma(d/2+1)^{2/d} \gamma(B(0,r))^{2/d}.
$$
Then, Equation \eqref{eq:gaussian_shift} on the volume of shifted balls entails
\begin{eqnarray*}
\forall x \in \R^d \quad \forall r >0 \qquad 
r^2 & \leq  & 2 e^{r^2/d} \Gamma(d/2+1)^{2/d} \left( \frac{\gamma(B(x,r)) e^{\|x\|^2/2} + \gamma(B(x-m,r)) e^{\|x-m\|^2/2}}{2}  \right)^{2/d} \\
& \leq & 2 e^{r^2/d} \Gamma(d/2+1)^{2/d}\left[ \gamma(x)^{-1}+\gamma(x-m)^{-1}\right]^{2/d}  \mu(B(x,r))^{2/d}.
\end{eqnarray*}
Using the Stirling formula, we have
$$
\Gamma(d/2+1) \leq 2 \sqrt{2\pi} (d/2+1)^{d/2+1/2} e^{-d/2-1}.
$$
We then plug-in this upper bound in the previous inequality and we deduce that:
\begin{eqnarray*}
r^2 & \leq & 2 e^{r^2/d} \frac{d}{2} \left( 2 \sqrt{2\pi} (1+2/d)^{d/2+1/2} e^{-d/2-1} \right)^{2/d} \left[ \gamma(x)^{-1}+\gamma(x-m)^{-1}\right]^{2/d}  \mu(B(x,r))^{2/d}\\
&
\leq & d e^{r^2/d} \left[ \gamma(x)^{-1}+\gamma(x-m)^{-1}\right]^{2/d}  \mu(B(x,r))^{2/d} 
\sup_{d'\geq 1} \left\{ \left( 2 \sqrt{2\pi} (1+2/d')^{d'/2+1/2} e^{-d'/2-1} \right)^{2/d'}\right\}.
\end{eqnarray*}
Some straightforward algebra yields:
$$
\sup_{d'\geq 1} \left\{ \left( 2 \sqrt{2\pi} (1+2/d')^{d'/2+1/2} e^{-d'/2-1} \right)^{2/d'}\right\} \leq 72 \pi e^{-3} \leq 12,
$$
which entails that:
$$
\left| \eta(B(x,r)) - \eta(x)\right|\leq  12 d e^{r^2/d} \left[ \|x-m\|^2 +\|x\|^2 \right] \left[ \gamma(x)^{-1}+\gamma(x-m)^{-1}\right]^{2/d}  \mu(B(x,r))^{2/d}.
$$
\end{proof}

\subsection{Analysis of the Nearest Neighbor classifier in finite dimension}
\label{proof:knngauss}
Below, $\Phi_{k,n}$ refers to the $k$ nearest neighbor classifier given a $n$ sample $\mathcal{D}_n := (X_1,Y_1),\ldots,(X_n,Y_n)$ in $\mathbb{R}^d$ with a Gaussian translation model.

\begin{proof}[Proof of Proposition \ref{theo:knngauss}]  We begin with a classical decomposition of the excess risk, we  have:
$$
\mathcal{R}_{f,g}(\Phi_{k,n,d})-\mathcal{R}_{f,g}(\Phi_d^{\star})= \mathbb{E} \left[ |2 \eta_d(X)-1| \mathds{1}_{\Phi_{k,n,d}(X) \neq \Phi_d^{\star}(X)} \right].
$$
Consider a small $\epsilon$, whose value will be fixed later on. For any $\delta>0$, we use the simple lower bound
\begin{eqnarray*}
\mathcal{R}_{f,g}(\Phi_{k,n,d})-\mathcal{R}_{f,g}(\Phi_d^{\star})&\geq&  \mathbb{E} \left[|2 \eta_d(X)-1| 
\mathds{1}_{\delta\epsilon<|\eta(X)-1/2|<\epsilon}
\mathds{1}_{\Phi_{k,n,d}(X) \neq \Phi_d^{\star}(X)} \right], \\
 &\geq& \delta\epsilon
\mathbb{E} \left[
\mathds{1}_{\delta\epsilon<|\eta(X)-1/2|<\epsilon}
\mathds{1}_{\Phi_{k,n,d}(X) \neq \Phi_d^{\star}(X)} \right], \\
& \geq & \delta\epsilon
\mathbb{E}_{X} \left[
\mathds{1}_{\delta\epsilon<|\eta(X)-1/2|<\epsilon}
\mathbb{E}_{\otimes^n} \left[
\mathds{1}_{\Phi_{k,n}(X) \neq \Phi_d^{\star}(X)} \right]\right],\\
& \geq & \delta\epsilon
\mathbb{E}_{X} \left[
\mathds{1}_{\delta\epsilon<|\eta(X)-1/2|<\epsilon}
\mathbb{E}_{\otimes^n} \left[
\mathds{1}_{\Phi_{k,n}(X) \neq \Phi_d^{\star}(X)} \right] \mathds{1}_{\lbrace \| X \| \leq R_d \rbrace}  \right],
\end{eqnarray*}
where $R_d := \tau\sqrt{d}$ for some $\tau >0$.  Proposition \ref{prop:gaussian_alpha} gives $\beta_d=2/d$ in our situation. From Proposition \ref{prop:gaussian_alpha}, the value of $L_R$ given in \eqref{eq:value_LR}, and the choice of $R = R_d$, we know that a $\tau >0$ exists such that $L_{R_d} = d$. It is important to notice that $R$ is independent of $n$.

We now use Lemma 5, Lemma 17 and Lemma 18 of \cite{chaudhuri}: for any $(\beta_d,L_R)$-smooth distribution (see the dependency on $\beta_d$ in Equation \ref{eq:smooth}), then a constant $\kappa>0$ exists such that for any $k$ and $n$:
$$
\mathbb{P}_{\otimes^n}\left[\Phi_{k,n}(X) \neq \Phi_d^{\star}(X) \, \left\vert \, |\eta(X)-1/2| \leq \frac{1}{\sqrt{k}} - L_{R_d} \left(\frac{k+\sqrt{k}+1}{n}\right)^{\beta_d}\right.\right] \geq \kappa.
$$
According to our choice of $k_n$ and $R_d$, we then have for any $\delta>0$:
\begin{eqnarray}%
\mathbb{E} \mathcal{R}(\Phi_{k_n,n,d})-\mathcal{R}(\Phi_d^{\star}) & \geq & \kappa \delta \epsilon \mathbb{E}_{X} \left[
\mathds{1}_{\delta \epsilon <|\eta(X)-1/2|<\epsilon} 
\mathds{1}_{|\eta(X)-1/2|<\frac{1}{\sqrt{k_n}} - L_R \left(\frac{k_n+\sqrt{k_n}+1}{n}\right)^{\beta} }\mathds{1}_{\lbrace \| X \| \leq R_d \rbrace} \right]\nonumber\\
& \geq &  \kappa \delta \epsilon \mathbb{E}_{X} \left[
\mathds{1}_{\delta \epsilon <|\eta(X)-1/2|<\epsilon} 
\mathds{1}_{|\eta(X)-1/2|< \left( \frac{k_n}{n}\right)^{2/d} \left[2d - d \left(1+k_n^{-1/2}+k_n^{-1}\right)^{2/d}  \right] }\mathds{1}_{\lbrace \| X \| \leq R_d \rbrace} \right] \nonumber\\
& \geq & \kappa \delta \epsilon \mathbb{E}_{X} \left[
\mathds{1}_{\delta \epsilon <|\eta(X)-1/2|<\epsilon} 
\mathds{1}_{|\eta(X)-1/2|< \frac{d}{2} \left( \frac{k_n}{n} \right)^{2/d}}\mathds{1}_{\lbrace \| X \| \leq R_d \rbrace} \right],
\label{eq:lower_bound}
\end{eqnarray}
where we used that $k\leq K_n$. To obtain the best achievable lower bound in (\ref{eq:lower_bound}), $\epsilon$ has to be chosen as large as possible. We are driven to  the choice ($\epsilon$ depends on $n$ and $d$):
$$\epsilon_n  = \frac{1}{2} d\left( \frac{k_n}{n}\right)^{2/d} .$$
Then one has for any value of $\delta$ smaller than $1$:
\begin{eqnarray*}
\mathcal{R}_{f,g}(\Phi_{k,n})-\mathcal{R}(\Phi_d^{\star}) 
& \geq & c_{\delta} \epsilon_n \mathbb{E}_{X} \left[ \mathds{1}_{\delta \epsilon_n <|\eta(X)-1/2|<\epsilon_n} \mathds{1}_{\lbrace \| X \| \leq R_d \rbrace} \right], \\
& \geq &c_{\delta} \epsilon_n \mathbb{P}_X \left(\lbrace \delta \epsilon_n <|\eta(X)-1/2|<\epsilon_n \rbrace \cap \lbrace \| X \| \leq R_d \rbrace\right) 
\end{eqnarray*}
Again, we shall use the margin property of the Gaussian translation model: Theorem \ref{thm:crown} in Appendix \ref{s:A} shows that a $\delta$ exists (independent on $n$) such that
$$
\mu \left( \delta t \leq \left| \eta(X) - \frac{1}{2} \right| \leq t   \right) \geq  \check{c}_\delta \, t,
$$
where $\check{c}$ is a small enough positive constant. In the same time, there exists a constant $C_\tau$ such that
$$ \mathbb{P}(\| X\| \leq \tau \sqrt{d}) \geq C_\tau.$$
The last bound of the excess risk above together with the previous inequality lead to a lower bound of the order $\epsilon_n^{2}$: a constant $C_1$ independent on $n$ and $d$ exists such that
$$
\mathbb{E} \mathcal{R}(\Phi_{k,n,d})-\mathcal{R}(\Phi_d^{\star}) \geq  C_1 d^2 \left( \frac{k}{n}\right)^{4/d} \geq \frac{C_1}{k} 
$$
We stress that this lower bound is uniform for any $k \leq K_n$ which leads to the desired result.  The upper bound involved in the statement of Proposition \ref{theo:knngauss} is a simple consequence of Theorem 4.3 of \cite{GKM16}.
\end{proof}

\subsection{Proof of Theorem \ref{thm:lwr_bound_knn}}
\label{proof:lwr_bound_knn}

\subsubsection{Technical result}

Below, we establish a complementary result with a lower bound on the probability involved in the margin condition. This will make it possible to derive a lower bound of the nearest neighbour classifier.

\begin{prop}\label{prop:mino_proba}
Let $X$ distributed according to the model (\ref{eq:model}) and for any fixed $\Delta=\|f-g\|_2$, then:
$$
\forall \epsilon < 1/4 \qquad
\mathbb{P}\left(\left| \eta(X)-\frac{1}{2} \right|\leq \epsilon \right) \geq (2\pi)^{-1/2} \left[  \frac{\epsilon}{\Delta}  e^{-(1+\Delta/2)^2/2}  \wedge \frac{e^{-1/2}}{2} \right] .
$$
\end{prop}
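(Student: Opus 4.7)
The plan is to express the event $\{|\eta(X) - 1/2| \leq \epsilon\}$ in terms of the log-likelihood ratio, reduce to a Gaussian-interval probability exactly as in~\eqref{eq:intervalle_gaussienne} of the proof of Proposition~\ref{thm:margin}, and then lower bound this probability by a short two-case analysis in $\Delta$.

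The key algebraic identity is the following: introducing the log-likelihood ratio $L := \log\bigl(q_f(X)/q_g(X)\bigr)$, one has $\eta(X) = e^L/(1+e^L)$ and therefore $\eta(X) - 1/2 = \tfrac{1}{2}\tanh(L/2)$. The elementary inequality $|\tanh(u)| \leq |u|$ then gives the inclusion $\{|L| \leq 4\epsilon\} \subseteq \{|\eta(X) - 1/2| \leq \epsilon\}$, so it suffices to lower-bound $\PR(|L| \leq 4\epsilon)$. Conditioning on $Y \sim \mathcal{B}(1/2)$ and applying Girsanov, exactly as in~\eqref{eq:intervalle_gaussienne}, gives
\[
\PR(|L| \leq 4\epsilon) = \tfrac{1}{2}\PR\!\left(\big|\tfrac{\Delta^2}{2} + \Delta\xi\big| \leq 4\epsilon\right) + \tfrac{1}{2}\PR\!\left(\big|{-}\tfrac{\Delta^2}{2} + \Delta\xi\big| \leq 4\epsilon\right)
\]
with $\xi \sim \cN(0,1)$. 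By the symmetry $\xi \overset{d}{=} -\xi$, both terms are equal to $\PR(\xi \in I)$, where $I := [\Delta/2 - 4\epsilon/\Delta,\, \Delta/2 + 4\epsilon/\Delta]$.

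It then remains to lower bound $\PR(\xi \in I)$. If $\Delta \geq 4\epsilon$, the interval has length $8\epsilon/\Delta \leq 2$ and its point of largest modulus is $\Delta/2 + 4\epsilon/\Delta \leq \Delta/2 + 1$, so bounding the standard Gaussian density $\phi$ below by $\phi(\Delta/2 + 1)$ yields $\PR(\xi \in I) \geq (8\epsilon/\Delta)(2\pi)^{-1/2} e^{-(1+\Delta/2)^2/2}$, which covers the first branch with room to spare. If instead $\Delta < 4\epsilon$, together with the assumption $\epsilon < 1/4$ this forces $I \supseteq [0,1]$: the left endpoint is nonpositive since $\Delta < 4\epsilon$ implies $\Delta^2 \leq 8\epsilon$, and the right endpoint is at least $1$ since $4\epsilon/\Delta > 1$. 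Consequently $\PR(\xi \in I) \geq \int_0^1 \phi \geq (2\pi)^{-1/2} e^{-1/2}$, which handles the second branch.

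The only mildly delicate step is Case~B, where one must carefully verify the inclusion $I \supseteq [0,1]$ by juggling the three constraints $\epsilon < 1/4$, $\Delta < 4\epsilon$, and $\Delta > 0$; beyond that, the argument is a direct lower-bound counterpart of the proof of Proposition~\ref{thm:margin}. The constants obtained along the way are actually better than those in the statement ($8\epsilon/\Delta$ rather than $\epsilon/\Delta$, and $e^{-1/2}$ rather than $e^{-1/2}/2$), leaving ample room if one wants to loosen any of the intermediate estimates, e.g.\ replace $|\tanh(u)|\leq|u|$ by a cruder bound.
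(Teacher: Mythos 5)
Your proof is correct and follows the same overall strategy as the paper: reduce the margin event to a Gaussian interval probability by conditioning on $Y$ and applying Girsanov, then perform a two-case analysis on the relative size of $\epsilon$ and $\Delta$. The one genuine variation is the initial reduction. The paper splits on the sign of $q_f-q_g$, bounds $2(q_f+q_g)$ by $2q_f$ or $2q_g$ depending on the case, and then uses the one-sided estimate $\log(1-2\epsilon)\leq -\epsilon$ (valid for $\epsilon<1/4$). You instead write $\eta-\tfrac12=\tfrac12\tanh(L/2)$ with $L=\log(q_f/q_g)$ and invoke $|\tanh u|\leq|u|$, which directly gives the symmetric inclusion $\{|L|\leq 4\epsilon\}\subseteq\{|\eta-\tfrac12|\leq\epsilon\}$ with no case split. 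This also makes it transparent, via the symmetry $\xi\overset{d}{=}-\xi$, that both conditional terms equal the same interval probability $\PR(\xi\in I)$, something the paper handles more implicitly through a shift-and-reflect argument. The ensuing case distinction is the same idea with a different threshold ($\Delta\gtrless 4\epsilon$ in your version versus $\Delta\gtrless\epsilon$ in the paper's); both close in one line per case and, as you observe, yield constants at least as good as those stated in the proposition.
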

\begin{proof}
To alleviate the notations, we skip the dependency on $X$ and write $\eta-1/2=\frac{q_f-q_g}{2(q_f+q_g)}$. We then repeat the arguments used above:
\begin{eqnarray*}
\PR\left( \left| \eta-\frac{1}{2}\right|\leq \epsilon \right) & = & 
\PR\left( \frac{|q_f-q_g|}{2(q_f+q_g)} \leq \epsilon \right) \\
& = & \PR\left( \frac{q_f-q_g}{2(q_f+q_g)} \leq \epsilon  \, , \, q_f>q_g \right) + 
\PR\left( \frac{q_g-q_f}{2(q_f+q_g)} \leq \epsilon  \, , \, q_f<q_g \right)
\\
& \geq &\PR \left(\frac{q_f-q_g}{2 q_f} \leq \epsilon  \, , \, q_f>q_g \right) + 
\PR\left( \frac{q_g-q_f}{2q_g} \leq \epsilon  \, , \, q_f<q_g \right) \\
& = & \PR \left( 0\leq  1- \frac{q_g}{q_f} \leq 2 \epsilon  \right) + 
\PR\left( 0\leq 1- \frac{q_f}{q_g} \leq \epsilon  \right) \\
& = & \PR \left( \log(1-2\epsilon) \leq \log\left(\frac{q_g}{q_f}\right) \leq 0  \right) + 
\PR \left( \log(1-2\epsilon) \leq \log\left(\frac{q_f}{q_g}\right) \leq 0  \right)
\end{eqnarray*}
We compute a lower bound of the first bound (the second term being handled similarly.
For $\epsilon<1/4$, it can be checked that $\log(1-2 \epsilon) < - \epsilon$. Therefore, we have
$$
\PR \left( \log(1-2\epsilon) \leq \log\left(\frac{q_g}{q_f}\right) \leq 0  \right) \geq 
\PR \left(  - \epsilon \leq \log\left(\frac{q_g}{q_f}\right) \leq 0  \right) 
$$
Using again the conditional distribution of $X \vert Y$ and that $Y$ is distributed according to a Bernoulli distribution $\mathcal{B}(1/2)$, we have
$$
\PR \left(  - \epsilon \leq \log\left(\frac{q_g}{q_f}\right) \leq 0  \right) 
= \frac{1}{2} \PR \left( - \epsilon \leq \frac{\Delta^2}{2} + \Delta \xi \leq 0 \right) + \frac{1}{2}
\PR \left( - \epsilon \leq -\frac{\Delta^2}{2} + \Delta \xi \leq 0 \right),
$$
where $\Delta = \|f-g\|_2$ and $\xi$ is distributed according to $\mathcal{N}(0,1)$. We can conclude that

$$
\PR \left(\left| \eta-\frac{1}{2}\right|\leq \epsilon \right) \geq \frac{1}{2} \int_{-\frac{\epsilon}{\Delta}-\frac{\Delta}{2}}^{-\Delta/2} \frac{e^{-t^2/2}}{\sqrt{2\pi}} dt + \frac{1}{2} \int_{-\frac{\epsilon}{\Delta}+\frac{\Delta}{2}}^{\Delta/2} \frac{e^{-t^2/2}}{\sqrt{2\pi}} dt.
$$
Then, we split our study into two cases:
\begin{itemize}
\item If $\epsilon \leq \Delta$, then $\forall t \in [-\frac{\epsilon}{\Delta}-\frac{\Delta}{2},\frac{\Delta}{2}]$ and $\frac{e^{-t^2/2}}{\sqrt{2 \pi}} \geq \frac{e^{-(1+\Delta/2)^2/2}}{\sqrt{2 \pi}}$
and in this case:
$$
\PR \left(\left| \eta-\frac{1}{2}\right|\leq \epsilon \right) \geq \frac{e^{-(1+\Delta/2)^2/2}}{\sqrt{2 \pi}} \frac{\epsilon}{\Delta}
$$
\item If $\epsilon > \Delta$, 
\begin{eqnarray*}
\PR \left(\left| \eta-\frac{1}{2}\right|\leq \epsilon \right)& \geq & \frac{1}{2}  \int_{-\frac{\epsilon}{\Delta}}^{-\Delta/2} \frac{e^{-t^2/2}}{\sqrt{2\pi}} dt + \frac{1}{2} \int_{-\frac{\epsilon}{\Delta}}^{0} \frac{e^{-t^2/2}}{\sqrt{2\pi}} dt\\
& \geq & \int_{-\frac{\epsilon}{\Delta}}^{-\Delta/2} \frac{e^{-t^2/2}}{\sqrt{2\pi}} dt \\
& \geq & (2\pi)^{-1/2} \left[ \int_{-1}^0  e^{-t^2/2} dt - \frac{\Delta}{2} \right]\\
& \geq & \frac{e^{-1/2}}{2 \sqrt{2 \pi}},
\end{eqnarray*}
where the last bound comes from the fact that $\int_{-1}^0  e^{-t^2/2} dt \geq e^{-1/2}$ while $\Delta<\epsilon<1/4<e^{-1/2}$.
\end{itemize}
This ends the proof of the Proposition.
\end{proof}

A key consequence   is the lower bound of the area of the crown $ \delta \epsilon \leq  | \eta-1/2|\leq \epsilon$ for $\delta$ small enough.

\begin{prop}\label{thm:crown}
Let $X$ given by (\ref{eq:model}) and for any fixed $\Delta=\|f-g\|_2$, if we set  $\delta =  \frac{e^{-(1+\Delta/2)^2/2}}{2 \sqrt{2\pi}}$, then:
$$
\forall \epsilon \leq \frac{1}{4} \wedge \Delta \qquad 
\mathbb{P}\left( \delta \epsilon \leq \left| \eta(X)-\frac{1}{2} \right|\leq \epsilon \right) \geq 
\delta \frac{\epsilon}{\Delta}.$$

\end{prop}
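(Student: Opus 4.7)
The plan is to adapt the argument of Proposition~\ref{prop:mino_proba} from the full disk $\{|\eta(X)-1/2|\leq\epsilon\}$ to the annulus $\{\delta\epsilon\leq|\eta(X)-1/2|\leq\epsilon\}$. First, I would use the sigmoid identity $\eta(X)=e^{L(X)}/(1+e^{L(X)})$ with $L(X):=\log(q_f(X)/q_g(X))$ to rewrite
\[
\{\delta\epsilon\leq|\eta(X)-1/2|\leq\epsilon\}\;=\;\{\phi(\delta\epsilon)\leq|L(X)|\leq\phi(\epsilon)\},
\qquad \phi(u):=\log\tfrac{1+2u}{1-2u}.
\]
Conditioning on $Y$ and invoking Girsanov's formula exactly as in the proof of Proposition~\ref{prop:mino_proba}, one has $L\mid Y=1\sim\cN(\Delta^2/2,\Delta^2)$ and $L\mid Y=0\sim\cN(-\Delta^2/2,\Delta^2)$. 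Setting $a:=\phi(\delta\epsilon)/\Delta$, $b:=\phi(\epsilon)/\Delta$, and $c:=\Delta/2$, a short symmetry computation (using $-\xi\stackrel{\mathrm{d}}{=}\xi$ for $\xi\sim\cN(0,1)$) yields the decomposition
\[
\PR\bigl(\delta\epsilon\leq|\eta(X)-1/2|\leq\epsilon\bigr)\;=\;\PR\bigl(\xi\in[a-c,b-c]\bigr)+\PR\bigl(\xi\in[a+c,b+c]\bigr),
\]
where both intervals have length $b-a=(\phi(\epsilon)-\phi(\delta\epsilon))/\Delta$.

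The heart of the proof is a lower bound on each Gaussian integral obtained by truncating the interval to its intersection with $[-(1+c),1+c]$, on which the density of $\xi$ is at least $e^{-(1+\Delta/2)^2/2}/\sqrt{2\pi}=2\delta$ (precisely the density bound used in the proof of Proposition~\ref{prop:mino_proba}). Elementary bookkeeping shows that the truncated lengths are at least $\min(b-a,\,1-a)$ for $[a+c,b+c]$ and at least $\min(b-a,\,1+\Delta-a)\geq\min(b-a,\,1-a)$ for $[a-c,b-c]$; summing the two contributions gives
\[
\PR\bigl(\delta\epsilon\leq|\eta(X)-1/2|\leq\epsilon\bigr)\;\geq\;4\delta\,\min(b-a,\,1-a).
\]

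To conclude, I would estimate the two minima via elementary properties of $\phi$: the bound $\phi(u)\geq 4u$ on $[0,1/2)$ gives $b-a\geq 4\epsilon(1-\delta)/\Delta\geq 2\epsilon/\Delta$ since $\delta<1/2$, while $\phi(u)\leq 16u/3$ on $[0,1/4]$ combined with $\epsilon\leq\Delta$ yields $a\leq 16\delta/3\leq 2/3$ (using $\delta\leq e^{-1/2}/(2\sqrt{2\pi})<1/8$), and hence $1-a\geq 1/3$. A two-case analysis finishes the argument: if $2\epsilon/\Delta\leq 1/3$ then $4\delta\,\min(b-a,1-a)\geq 8\delta\epsilon/\Delta\geq\delta\epsilon/\Delta$, and otherwise the bound is $4\delta/3\geq\delta\geq\delta\epsilon/\Delta$ since $\epsilon\leq\Delta$.

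The main obstacle is identifying the right density bound together with the correct truncation. A tempting alternative, namely the subtraction $\PR(|\eta-1/2|\leq\epsilon)-\PR(|\eta-1/2|\leq\delta\epsilon)$ via Propositions~\ref{prop:mino_proba} and~\ref{thm:margin}, does not work because the leading constant $2$ in Proposition~\ref{prop:mino_proba} is dominated by the universal constant $10$ appearing in Proposition~\ref{thm:margin} after the rescaling $\epsilon\mapsto\delta\epsilon$, so the subtraction can become negative. Working directly at the level of the Gaussian integrals is essential, as it applies the \emph{same} density lower bound $2\delta$ simultaneously to both contributions from $[a-c,b-c]$ and $[a+c,b+c]$.
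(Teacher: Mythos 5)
Your proof is correct and takes a genuinely different route from the paper's. The paper's argument is a subtraction: write $\PR(\delta\epsilon\leq|\eta(X)-1/2|\leq\epsilon)=\PR(|\eta(X)-1/2|\leq\epsilon)-\PR(|\eta(X)-1/2|\leq\delta\epsilon)$, lower-bound the first term by $c\,\delta\epsilon/\Delta$ via Proposition~\ref{prop:mino_proba}, upper-bound the second via Proposition~\ref{thm:margin}, and take $c=2$. You are right to flag that this cannot work with the stated constants: Proposition~\ref{thm:margin} gives $\PR(|\eta(X)-1/2|\leq\delta\epsilon)\leq 10\,\delta\epsilon/\Delta$, so the paper's displayed bound $(c-1)\delta\epsilon/\Delta$ should really read $(c-10)\delta\epsilon/\Delta$, which is negative at $c=2$. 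As written the paper's proof has a genuine gap; it could only be repaired by redefining $\delta$ with some $c>10$, i.e.\ a smaller $\delta$ than the one fixed in the proposition statement. Your approach avoids the cancellation problem entirely by working directly at the Gaussian level: pass to the log-likelihood ratio $L$, rewrite the annulus as $\{\phi(\delta\epsilon)\leq|L|\leq\phi(\epsilon)\}$ with $\phi(u)=\log\frac{1+2u}{1-2u}$, condition on $Y$ and use Gaussian symmetry to reduce to the two shifted intervals $[a-c,b-c]$ and $[a+c,b+c]$ of equal length, and lower-bound both simultaneously by the same uniform density estimate $2\delta$ on $[-(1+\Delta/2),\,1+\Delta/2]$. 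Because the lower bound is built additively from a single density estimate rather than as the difference of a loose upper bound and a tighter lower bound, the constants cooperate. The elementary facts you invoke all check out under $\epsilon\leq\frac{1}{4}\wedge\Delta$: since $\phi$ is convex with $\phi'(0)=4$ and $\phi'(1/4)=16/3$, one has $4u\leq\phi(u)\leq 16u/3$ on $[0,1/4]$, hence $b-a\geq 2\epsilon/\Delta$ (using $\delta<1/2$) and $a\leq 16\delta/3<2/3$ (using $\epsilon\leq\Delta$ and $\delta\leq e^{-1/2}/(2\sqrt{2\pi})<1/8$); the two-case comparison of $2\epsilon/\Delta$ with $1/3$ then gives $4\delta\min(b-a,1-a)\geq\delta\epsilon/\Delta$. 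Your argument establishes the proposition exactly as stated, which the paper's own proof, with the given constants, does not.
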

\begin{proof}

For a given $c>0$, we introduce $\delta =  \frac{e^{-(1+\Delta/2)^2/2}}{c \sqrt{2\pi}}$ and use the decomposition
\begin{eqnarray*}
\mathbb{P}\left( \delta \epsilon \leq \left| \eta(X)-\frac{1}{2} \right|\leq \epsilon \right) &= &
\mathbb{P}\left(\left| \eta(X)-\frac{1}{2} \right|\leq \epsilon \right) - \PR\left(\left| \eta(X)-\frac{1}{2} \right|\leq \delta \epsilon \right)\\
& \geq & c \delta \frac{\epsilon}{\Delta} -  \PR\left(\left| \eta(X)-\frac{1}{2} \right|\leq \delta \epsilon \right), \\
\end{eqnarray*}
where the last line comes from Proposition \ref{prop:mino_proba}. Now, we use Proposition \ref{thm:margin} to conclude that
$$
\mathbb{P}\left( \delta \epsilon \leq \left| \eta(X)-\frac{1}{2} \right|\leq \epsilon \right) \geq  (c-1) \delta \frac{\epsilon}{\Delta}.
$$
We now choose $c=2$ and obtain the desired result.
\end{proof}

\begin{rem}
Proposition \ref{thm:crown} states that when $\Delta$ is small, the measure of the uncertainty area for the classification ($\eta \simeq 1/2$) has an important mass although this measure decreases linearly with the inverse of $\Delta$. This result is intuitive and  translates the fact that for large values of $\Delta$, the classification problem is easy (the two classes are well separated) and there is a steep transition from $\{\eta>1/2\}$ to $\{\eta<1/2\}$.
\end{rem}
\subsubsection{Logarithmic rate of Nearest Neighbor rule}

This last paragraph is devoted to the proof of Theorem \ref{thm:lwr_bound_knn}, which shows that a sample splitting strategy used with the NN rule is not efficient with a logarithmic decrease of the misclassification rate.

\begin{proof}[Proof of Theorem \ref{thm:lwr_bound_knn}]

Since the truncation is chosen once for all at the beginning of the classification process with a sample-splitting strategy, our elementary starting point is given by:
$$
\mathcal{R}_{f,g}(\hat{\Phi}_{kNN}^{\hat{d}})-\mathcal{R}_{f,g}(\Phi^{\star})  \geq \min_{d \in \mathbb{N}}   \mathcal{R}_{f,g}(\hat{\Phi}_{kNN}^{d})-\mathcal{R}_{f,g}(\Phi^{\star}).
$$
For any frequency threshold $d \in \mathbb{N}$, we decompose the excess risk as:
\begin{equation}\label{eq:bayes_bayesd}
\mathcal{R}_{f,g}(\Phi_{k,n,d})-\mathcal{R}_{f,g}(\Phi^{\star}) = \mathcal{R}_{f,g}(\Phi_{k,n,d})-\mathcal{R}_{f,g}(\Phi^\star_d)+ \mathcal{R}_{f,g}(\Phi_d^\star) - \mathcal{R}_{f,g}(\Phi^{\star}),
\end{equation}
where $\Phi^{\star}_d$ is the Bayes classification rule with the Gaussian $d$-dimensional model that involves the  first $d$ frequencies.
Proposition \ref{theo:knngauss} shows that if $\Delta^2=\|f-g\|_2^2$, then a constant $c_{\Delta,1}$ exists such that:
\begin{equation}\label{eq:knnbye}
\mathcal{R}_{f,g}(\Phi_{k,n})-\mathcal{R}_{f,g}(\Phi^\star_d) \geq c_{\Delta,1} n^{-\frac{4}{d+4}}.
\end{equation}
We now focus on the second term of (\ref{eq:bayes_bayesd}). Since $Y$ is distributed according to a Bernoulli distribution $\mathcal{B}(1/2)$, we have:
$$
 \mathcal{R}_{f,g}(\Phi^\star_d) - \mathcal{R}_{f,g}(\Phi^{\star}) = \frac{1}{2}\left( \mathbb{P}_{f} [\Phi^\star_d=1] - \mathbb{P}_{f} [\Phi^\star=1]\right) +  \frac{1}{2}\left( \mathbb{P}_{g} [\Phi^\star_d=0] - \mathbb{P}_{g} [\Phi^\star=0]\right).
$$
We compute the first term (the second term is handled similarly). Let $f,g$ be fixed function belonging to $\Hs(R)$ which will be made precise latter on. We define $\Delta_d^2=\|g-f\|_{d,2}^2$ the $L^2$ norm of $g-f$ restricted to the first $d$ coefficients. If $\xi$ is a standard Gaussian random variable, we have:
$$
\mathbb{P}_{f} [\Phi^\star_d(X)=1] = \mathbb{P}_f \left[ \langle X-f,g-f\rangle_d> \frac{\|g-f\|_{d,2}^2}{2}  \right]= \mathbb{P}\left( \xi \Delta_d > \frac{\Delta_d^2}{2}\right)
$$
In the meantime, the second probability can be computed as
$$
\mathbb{P}_{f} [\Phi^\star(X)=1]=\mathbb{P}_f \left[ \langle X-f,g-f\rangle> \frac{\|g-f\|_{2}^2}{2}  \right]
= \mathbb{P}\left( \xi \Delta > \frac{\Delta^2}{2}\right).
$$
Hence, we deduce that
$$
\mathbb{P}_{f} [\Phi^\star_d(X)=1] - \mathbb{P}_{f} [\Phi^\star(X)=1]  = \int_{\Delta_d/2}^{\Delta} \gamma(s) ds \geq \gamma(\Delta) \frac{\Delta-\Delta_d}{2} = \gamma(\Delta) \frac{\Delta^2-\Delta_d^2}{2(\Delta+\Delta_d)} \geq \frac{\Delta^2-\Delta_d^2}{4 \Delta}\gamma(\Delta).
$$
We can then find $f$ and $g$ such that $\Delta^2 < 1$ and $\Delta^2-\Delta_d \sim d^{-2s}$ because $f$ and $g$ shall belong to the Sobolev space $\mathcal{H}_s(R)$. Hence, we deduce the following lower bound on the excess risk between the truncated Bayes rule and the non parametric Bayes rule: a constant $c_{\Delta,2}$ exists such that
\begin{equation}\label{eq:bayesbie}
 \mathbb{P}_{f} [\Phi^\star_d=1] - \mathbb{P}_{f} [\Phi^\star=1] \geq c_{\Delta,2} \, d^{-2s}.
\end{equation}
Gathering Equations (\ref{eq:knnbye}) and (\ref{eq:bayesbie}), we deduce that
$$
   \mathcal{R}_{f,g}(\hat{\Phi}_{k,n,\hat d})-\mathcal{R}_{f,g}(\Phi^{\star})  \geq c_{\Delta,3} \min_{d \in \mathbb{N}^\star}
\left[d^{-2s} +n^{-\frac{4}{4+d}}\right].
$$
We then optimize our lower bound with respect to $d$ and we obtain the conclusion of the proof.
\end{proof}



\bibliographystyle{plain}
\bibliography{references}

\begin{thebibliography}{10}

\bibitem{ABC03}
C.~Abraham, G.~Biau, and B.~Cadre.
\newblock On the kernel rule for function classification.
\newblock {\em Ann. Inst. Statist. Math}, pages 619--633, 2003.

\bibitem{AT07}
J.Y. Audibert and A.B. Tsybakov.
\newblock Fast learning rates for plug-in classifiers.
\newblock {\em Ann. Statist.}, 35(2):608--633, 2007.

\bibitem{BD2015}
G.~Biau and L.~Devroye.
\newblock {\em Lectures on the nearest neighbor method}.
\newblock Springer Series in the Data Sciences. Springer, Cham, 2015.

\bibitem{BiauScornet}
G.~Biau and E.~Scornet.
\newblock A random forest guided tour.
\newblock {\em TEST}, 25(2):197--227, 2016.

\bibitem{BBG2005}
S.~Boucheron, O.~Bousquet, and G.~Lugosi.
\newblock Theory of classification: a survey of some recent advances.
\newblock {\em ESAIM Probab. Stat.}, 9:323--375, 2005.

\bibitem{BLM}
S.~Boucheron, G.~Lugosi, and P.~Massart.
\newblock {\em Concentration inequalities}.
\newblock Oxford University Press, Oxford, 2013.
\newblock A nonasymptotic theory of independence, With a foreword by Michel
  Ledoux.

\bibitem{Cadre}
B.~Cadre.
\newblock Supervised classification of diffusion paths.
\newblock {\em Math. Methods Statist.}, 22(3):213--225, 2013.

\bibitem{CG06}
F.~Cerou and A.~Guyader.
\newblock Nearest neighbor classification in infinite dimension.
\newblock {\em ESAIM, P. \& S.}, pages 340--355, 2006.

\bibitem{chaudhuri}
K.~Chaudhuri and S.~Dasgupta.
\newblock Rates of convergence for nearest neighbor classification.
\newblock In Z.~Ghahramani, M.~Welling, C.~Cortes, N.D. Lawrence, and K.Q.
  Weinberger, editors, {\em Advances in Neural Information Processing Systems
  27}, pages 3437--3445. Curran Associates, Inc., 2014.

\bibitem{C02}
T.~Chonavel.
\newblock {\em Statistical Signal Processing}.
\newblock Springer-Verlag, New-York, 2002.

\bibitem{CoH67}
T.M. Cover and P.~Hart.
\newblock Nearest neighbor pattern classification.
\newblock {\em IEEE Trans. Inform. Theory}, 13(1):21--27, 1967.

\bibitem{CoTh06}
T.M. Cover and J.A. Thomas.
\newblock {\em Elements of information theory}.
\newblock John Wiley \& Sons, second edition, 2006.

\bibitem{DGL96}
L.~Devroye, L.~Gy{\"o}rfi, and G.~Lugosi.
\newblock {\em A probabilistic theory of pattern recognition}, volume~31 of
  {\em Applications of Mathematics (New York)}.
\newblock Springer-Verlag, New York, 1996.

\bibitem{GKM16}
S.~Gadat, T.~Klein, and C.~Marteau.
\newblock Classification in general finite dimensional spaces with the
  {$k$}-nearest neighbor rule.
\newblock {\em Ann. Statist.}, 44(3):982--1009, 2016.

\bibitem{GeMeSt-17-Fano}
S.~Gerchinovitz, P.~M\'{e}nard, and G.~Stoltz.
\newblock Fano's inequality for random variables.
\newblock {\em arXiv:1702.05985}, 2017.

\bibitem{G78}
L.~Gy{\H{o}}rfi.
\newblock On the rate of convergence of nearest neighbor rules.
\newblock {\em IEEE Trans. Inform. Theory}, 24(4):509--512, 1978.

\bibitem{IH81}
I.~Ibragimov and R.~Khasminskii.
\newblock {\em Statistical Estimation: Asymptotic Theory}.
\newblock Springer-Verlag, New-York, 1981.

\bibitem{IbHa-81-StatisticalEstimation}
I.~A. Ibragimov and R.~Z. Has'minskii.
\newblock {\em Statistical Estimation: Asymptotic Theory}, volume~16.
\newblock Springer-Verlag New York, 1981.

\bibitem{IW_1989}
N.~Ikeda and S.~Watanabe.
\newblock {\em Stochastic differential equations and diffusion processes},
  volume~24 of {\em North-Holland Mathematical Library}.
\newblock North-Holland Publishing Co., Amsterdam; Kodansha, Ltd., Tokyo,
  second edition, 1989.

\bibitem{I82}
Y.~Ingster.
\newblock Minimax nonparametric detection of signals in white gaussian noise.
\newblock {\em Problems of Information Transmission}, 18:130--140, 1982.

\bibitem{Kuelbs94}
J.~Kuelbs, W.V. Li, and W.~Linde.
\newblock The {G}aussian measure of shifted balls.
\newblock {\em Probab. Theory Related Fields}, 98(2):143--162, 1994.

\bibitem{KP95}
S.R. Kulkarni and S.E. Posner.
\newblock Rates of convergence of nearest neighbor estimation under arbitrary
  sampling.
\newblock {\em IEEE Trans. Inform. Theory}, 41(4):1028--1039, 1995.

\bibitem{LL96}
D.~Lamberton and B.~Lapeyre.
\newblock {\em Introduction to Stochastic Calculus Applied to Finance}.
\newblock Chapman and Hall, CRC Press, London, 1996.

\bibitem{L03}
R.~Lande, S.~Engen, and Saether.
\newblock {\em Stochastic Populations Dynamics in Ecology and Conservation}.
\newblock Oxford University Press Inc., New-York, 2003.

\bibitem{Beatrice2000}
B.~Laurent and P.~Massart.
\newblock Adaptive estimation of a quadratic functional by model selection.
\newblock {\em Ann. Statist.}, 28(5):1302--1338, 2000.

\bibitem{L90}
O.~Lepskii.
\newblock On a problem of adaptive estimation in gaussian white noise.
\newblock {\em Theory of Probability and its Applications}, 35:454--466, 1990.

\bibitem{MT99}
E.~Mammen and A.B. Tsybakov.
\newblock Smooth discrimination analysis.
\newblock {\em Ann. Statist.}, 27(6):1808--1829, 1999.

\bibitem{Massart03StFlour}
P.~Massart.
\newblock {\em {Concentration Inequalities and Model Selection}}, volume 1896
  of {\em {Lecture Notes in Mathematics}}.
\newblock Springer, Berlin, 2007.

\bibitem{MaNe-06-RiskBounds}
P.~Massart and E.~N\'{e}d\'{e}lec.
\newblock Risk bounds for statistical learning.
\newblock {\em Ann. Statist.}, 34(5):2326--2366, 2006.

\bibitem{RaSrTs-13-MinimaxRegretRisk}
A.~Rakhlin, K.~Sridharan, and A.B. Tsybakov.
\newblock Empirical entropy, minimax regret and minimax risk.
\newblock {\em Bernoulli}, 23(2):789--824, 2017.

\bibitem{S12}
R.~Samworth.
\newblock Optimal weighted nearest neighbour classifiers.
\newblock {\em Ann. Statist.}, 40:2733--2763, 2012.

\bibitem{Steinwart}
I.~Steinwart and A.~Christmann.
\newblock {\em Support vector machines}.
\newblock Information Science and Statistics. Springer, New York, 2008.

\end{thebibliography}

\end{document}